\newfont{\teneufm}{eufm10}
\newfont{\seveneufm}{eufm7}
\newfont{\fiveeufm}{eufm5}
\def\bbbc{{\mathchoice {\setbox0=\hbox{$\displaystyle\rm C$}\hbox{\hbox
to0pt{\kern0.4\wd0\vrule height0.9\ht0\hss}\box0}}
{\setbox0=\hbox{$\textstyle\rm C$}\hbox{\hbox
to0pt{\kern0.4\wd0\vrule height0.9\ht0\hss}\box0}}
{\setbox0=\hbox{$\scriptstyle\rm C$}\hbox{\hbox
to0pt{\kern0.4\wd0\vrule height0.9\ht0\hss}\box0}}
{\setbox0=\hbox{$\scriptscriptstyle\rm C$}\hbox{\hbox
to0pt{\kern0.4\wd0\vrule height0.9\ht0\hss}\box0}}}}
\def\bbbq{{\mathchoice {\setbox0=\hbox{$\displaystyle\rm
Q$}\hbox{\raise 0.15\ht0\hbox to0pt{\kern0.4\wd0\vrule
height0.8\ht0\hss}\box0}} {\setbox0=\hbox{$\textstyle\rm
Q$}\hbox{\raise 0.15\ht0\hbox to0pt{\kern0.4\wd0\vrule
height0.8\ht0\hss}\box0}} {\setbox0=\hbox{$\scriptstyle\rm
Q$}\hbox{\raise 0.15\ht0\hbox to0pt{\kern0.4\wd0\vrule
height0.7\ht0\hss}\box0}} {\setbox0=\hbox{$\scriptscriptstyle\rm
Q$}\hbox{\raise 0.15\ht0\hbox to0pt{\kern0.4\wd0\vrule
height0.7\ht0\hss}\box0}}}}
\def\bbbt{{\mathchoice {\setbox0=\hbox{$\displaystyle\rm
T$}\hbox{\hbox to0pt{\kern0.3\wd0\vrule height0.9\ht0\hss}\box0}}
{\setbox0=\hbox{$\textstyle\rm T$}\hbox{\hbox
to0pt{\kern0.3\wd0\vrule height0.9\ht0\hss}\box0}}
{\setbox0=\hbox{$\scriptstyle\rm T$}\hbox{\hbox
to0pt{\kern0.3\wd0\vrule height0.9\ht0\hss}\box0}}
{\setbox0=\hbox{$\scriptscriptstyle\rm T$}\hbox{\hbox
to0pt{\kern0.3\wd0\vrule height0.9\ht0\hss}\box0}}}}
\def\bbbs{{\mathchoice
{\setbox0=\hbox{$\displaystyle     \rm S$}\hbox{\raise0.5\ht0\hbox
to0pt{\kern0.35\wd0\vrule height0.45\ht0\hss}\hbox
to0pt{\kern0.55\wd0\vrule height0.5\ht0\hss}\box0}}
{\setbox0=\hbox{$\textstyle        \rm S$}\hbox{\raise0.5\ht0\hbox
to0pt{\kern0.35\wd0\vrule height0.45\ht0\hss}\hbox
to0pt{\kern0.55\wd0\vrule height0.5\ht0\hss}\box0}}
{\setbox0=\hbox{$\scriptstyle      \rm S$}\hbox{\raise0.5\ht0\hbox
to0pt{\kern0.35\wd0\vrule height0.45\ht0\hss}\raise0.05\ht0\hbox
to0pt{\kern0.5\wd0\vrule height0.45\ht0\hss}\box0}}
{\setbox0=\hbox{$\scriptscriptstyle\rm S$}\hbox{\raise0.5\ht0\hbox
to0pt{\kern0.4\wd0\vrule height0.45\ht0\hss}\raise0.05\ht0\hbox
to0pt{\kern0.55\wd0\vrule height0.45\ht0\hss}\box0}}}}
\def\bbbz{{\mathchoice {\hbox{$\sf\textstyle Z\kern-0.4em Z$}}
{\hbox{$\sf\textstyle Z\kern-0.4em Z$}} {\hbox{$\sf\scriptstyle
Z\kern-0.3em Z$}} {\hbox{$\sf\scriptscriptstyle Z\kern-0.2em Z$}}}}
\newtheorem{theorem}{Theorem}
\newtheorem{lemma}[theorem]{Lemma}
\newtheorem{cor}[theorem]{Corollary}
\newtheorem{rem}[theorem]{Remark}
\def\squareforqed{\hbox{\rlap{$\sqcap$}$\sqcup$}}
\def\qed{\ifmmode\squareforqed\else{\unskip\nobreak\hfil
\penalty50\hskip1em\null\nobreak\hfil\squareforqed
\parfillskip=0pt\finalhyphendemerits=0\endgraf}\fi}
\def\cA{{\mathcal A}}
\def\cB{{\mathcal B}}
\def\cE{{\mathcal E}}
\def\cF{{\mathcal F}}
\def\cH{{\mathcal H}}
\def\cI{{\mathcal I}}
\def\cJ{{\mathcal J}}
\def\cP{{\mathcal P}}
\def\cQ{{\mathcal Q}}
\def\cS{{\mathcal S}}
\def\cZ{{\mathcal Z}}
\def \sf {\mathfrak s}
\def\Pf {\mathfrak P}
\def\ordp{{\,\mathrm{ord}_p}\,}
\def\ZK{\Z_\K}
\def\Res{{\mathrm{Res}}}
\newcommand{\ignore}[1]{}
\def\vec#1{\mathbf{#1}}
\def \C{\mathbb{C}}
\def \F{\mathbb{F}}
\def \K{\mathbb{K}}
\def \Z{\mathbb{Z}}
\def \R{\mathbb{R}}
\def \Q{\mathbb{Q}}
\def \N{\mathbb{N}}
\def \Z{\mathbb{Z}}
\def\mand{\qquad\mbox{and}\qquad}
\def\\{\cr}
\def\({\left(}
\def\){\right)}
\def\fl#1{\left\lfloor#1\right\rfloor}
\def\rf#1{\left\lceil#1\right\rceil}
\begin{document}

\title[Multiplicative Congruences]{Multiplicative Congruences
with Variables from Short Intervals}

\author[J.~Bourgain]{Jean~Bourgain}
\address{Institute for Advanced Study,
Princeton, NJ 08540, USA} \email{bourgain@ias.edu}

\author[M. Z. Garaev]
{Moubariz~Z.~Garaev}
\address{Centro de Ciencias Matem\'{a}ticas, Universidad Nacional Aut\'onoma de M\'{e}xico,
C.P. 58089, Morelia, Michoac\'{a}n, M\'{e}xico}
\email{garaev@matmor.unam.mx}

\author[S.~V.~Konyagin]{Sergei V.~Konyagin}
\address{Steklov Mathematical Institute,
8, Gubkin Street, Moscow, 119991, Russia} \email{konyagin@mi.ras.ru}

\author[I.~E.~Shparlinski]{Igor E.~Shparlinski}
\address{Department of Computing, Macquarie University,
Sydney, NSW 2109, Australia} \email{igor.shparlinski@mq.edu.au}

\begin{abstract}
Recently, several bounds have been obtained on the number of solutions to congruences of
the type
$$
(x_1+s)\ldots(x_{\nu}+s)\equiv  (y_1+s)\ldots(y_{\nu}+s)\not\equiv0
\pmod p
$$
modulo a prime $p$ with variables from some short intervals.
Here, for almost all $p$ and all $s$ and
also for a fixed $p$ and almost all $s$, we derive stronger bounds.
We also use similar ideas to show that for almost all primes, one
can always find an element of a large order in any rather short interval.
\end{abstract}

%

\maketitle

\section{Introduction}

For a prime $p$, let $\F_p$ be the field of residues modulo $p$.
Also, denote $\F_p^*=\F_p\setminus\{0\}$. For integers $h\ge3$ and $\nu
\ge 1$ and elements $s \in \F_p$ and $\lambda  \in \F_p^*$, we
denote by $K_{\nu}(p,h,s)$ the number of solutions of the
congruence
\begin{equation}
\label{eq:cong x,y}
\begin{split}
(x_1+s)\ldots(x_{\nu}+s)\equiv & (y_1+s)\ldots(y_{\nu}+s)\not\equiv0 \pmod p,\\
1\le x_1,\ldots&,x_{\nu}, y_1,\ldots,y_{\nu}\le h.
\end{split}
\end{equation}

Recently, a series of bounds on $K_{\nu}(p,h,s)$ as well as on the
number of solutions of a one-sided congruence
\begin{equation}
\label{eq:asym cong x,y}
\begin{split}
(x_1+s)\ldots(x_{\nu}+s)\equiv & \lambda \pmod p,\\
1\le x_1,\ldots,x_{\nu}&\le h.
\end{split}
\end{equation}
have been obtained, see~\cite{BGKS1,BGKS2,CillGar} and references therein.

In particular, it is shown in~\cite{BGKS2} that
for any fixed integer $\nu \ge 3$ we have
$$
K_{\nu}(p,h,s)\le  \(\frac{h^\nu}
{p^{\nu/e_\nu}} + 1\)
h^\nu\exp\(O\(\frac{\log h}{\log\log h}\)\),
$$
where
$$
e_\nu=\max\{\nu^2-2\nu-2,\nu^2-3\nu+4\}.
$$

Here we use and develop further some ideas of~\cite{BGKS1,BGKS2} and
obtain stronger bounds on $K_{\nu}(p,h,s)$
\begin{itemize}
\item for almost all $p$ and all $s$;
\item for a fixed $p$ and almost all  $s$.
\end{itemize}

For this purpose, we also consider the following equation
with complex $\sigma \in\C$:
\begin{equation}
\label{eq:eq x,y}
\begin{split}
(x_1+\sigma)\ldots(x_{\nu}+\sigma)= &(y_1+\sigma)\ldots(y_{\nu}+\sigma)\neq0,\\
1\le x_1,\ldots,&x_{\nu}, y_1,\ldots,y_{\nu}\le h.
\end{split}
\end{equation}
which is an analogue of the congruence~\eqref{eq:cong x,y}.

We denote by $K_{\nu}(h,\sigma)$ the number of solutions of~\eqref{eq:eq x,y}.
Here we give an asymptotic formula for $K_{\nu}(h,\sigma)$
that holds for almost all rational $\sigma$ and all irrational $\sigma$, which could be of
independent interest.

Finally, in Section~\ref{sec:Ord} we give applications of
our bounds of $K_{\nu}(p,h,s)$, and underlying ideas, to the existence of
elements of large order in short intervals.

We recall that the
notations $A \ll B$, $B \gg A$ and  $A = O(B)$ are both equivalent to the
statement that the inequality $|A| \le c\,B$ holds with some
constant $c> 0$.
Throughout the paper, any implied constants in the symbols $`\ll'$, $`\gg'$ and $`O'$ may depend on the
integer parameter $\nu\ge 1$ and sometimes on some other explicitly
mentioned parameters
and are absolute otherwise.

As usual, we use $\pi(T)$ to denote the number of primes $p\le T$.

\section{Preliminaries}

\subsection{Background on geometry of numbers}

Recall that a lattice in $\R^n$ is an additive subgroup of
$\R^n$ generated by $n$ linearly independent vectors.
Take an arbitrary convex compact and symmetric with respect to $0$
body $D\subseteq\R^n$. Recall that, for a lattice $\Gamma\subseteq\R^n$
and $i=1,\ldots,n$, the $i$th successive minimum $\lambda_i(D,\Gamma)$
of the set $D$ with respect to the lattice $\Gamma$ is defined
as the minimal number $\lambda$ such that the set $\lambda D$
contains $i$ linearly independent vectors of the lattice $\Gamma$.
Obviously, $\lambda_1(D,\Gamma)\le\ldots\le\lambda_n(D,\Gamma)$.
We need the following result given in~\cite[Proposition~2.1]{BHW}
(see also~\cite[Exercise~3.5.6]{TaoVu} for a simplified
form that is still enough for our purposes).

\begin{lemma}
\label{lem:latpoints} We have,
$$    \#(D\cap\Gamma)\le \prod_{i=1}^n \(\frac{2i}{\lambda_i(D,\Gamma)} + 1\).
$$
\end{lemma}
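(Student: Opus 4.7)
The plan is to use the classical geometry-of-numbers strategy of introducing a basis of $\Gamma$ adapted to $D$ and then counting the possible integer coefficients of a point $x\in D\cap\Gamma$ in that basis.

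First, by the very definition of the successive minima, there exist linearly independent $b_1,\dots,b_n\in\Gamma$ with $b_i\in\lambda_i(D,\Gamma)D$. These vectors generally span only a full-rank sublattice of $\Gamma$. The next step, which is essentially Mahler's theorem, is to pass to a genuine basis $u_1,\dots,u_n$ of $\Gamma$ satisfying $\Gamma\cap V_i=\Z u_1+\dots+\Z u_i$ (with $V_i=\mathrm{span}_\R(b_1,\dots,b_i)$) and $\|u_i\|_D \le (i/2)\lambda_i$ for $i\ge 2$ (and $\|u_1\|_D\le\lambda_1$), where $\|\cdot\|_D$ is the Minkowski functional of $D$. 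The construction is inductive: at step $i$ one chooses $u_i$ to be a shortest lattice vector in $\Gamma\cap V_i$ lying outside $V_{i-1}$, and the factor $i/2$ arises from bounding the denominator appearing when $u_i$ is expressed as a rational combination of $b_1,\dots,b_i$.

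For any $x\in D\cap\Gamma$, write $x=m_1u_1+\dots+m_nu_n$ with $m_i\in\Z$. Projecting onto the one-dimensional quotient $V_i/V_{i-1}$ equipped with the induced quotient norm, the image of $D$ has radius at most $1$ while the image $\bar u_i$ of $u_i$ has quotient norm bounded below by $\lambda_i/i$: otherwise one could combine a short representative of $\bar u_i$ with $b_1,\dots,b_{i-1}$, after multiplying by an integer of size at most $i$, to produce $i$ linearly independent lattice vectors of $D$-norm strictly less than $\lambda_i$, contradicting the definition of the $i$-th successive minimum. Since $\pi_i(x)=m_i\bar u_i$ modulo $V_{i-1}$, this yields $|m_i|\le i/\lambda_i$, and counting integer tuples gives
\[
\#(D\cap\Gamma)\le\prod_{i=1}^n\Bigl(2\Bigl\lfloor\tfrac{i}{\lambda_i}\Bigr\rfloor+1\Bigr)\le\prod_{i=1}^n\Bigl(\tfrac{2i}{\lambda_i(D,\Gamma)}+1\Bigr),
\]
as required.

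The main obstacle is the combined quantitative content of the two steps above: upgrading the Minkowski vectors $b_i$ (for which only the upper bound $\|b_i\|_D\le\lambda_i$ is automatic) into a basis of $\Gamma$ while simultaneously keeping control of the quotient norms in every flag quotient $V_i/V_{i-1}$. This is the heart of Mahler's construction and is where the factor of $i$, rather than $1$, in the bound $2i/\lambda_i+1$ is forced on us.
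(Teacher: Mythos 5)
The paper does not prove this lemma; it simply cites~\cite[Proposition~2.1]{BHW} (and~\cite[Exercise~3.5.6]{TaoVu}), so you are not replacing a written argument but supplying one from scratch. Your overall strategy --- pass to a basis $u_1,\dots,u_n$ of $\Gamma$ adapted to the flag $V_1\subset\cdots\subset V_n$ spanned by the minima vectors, and then bound each integer coordinate $m_i$ of a point $x=\sum m_ju_j\in D\cap\Gamma$ separately --- is a sensible one, but as written it contains two genuine gaps.

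The more serious gap is the assertion that, after projecting, ``the image of $D$ has radius at most $1$'' and hence $|m_i|\le 1/\|\bar u_i\|_Q\le i/\lambda_i$. The map that extracts $m_i$ from a general $x\in\R^n$ is the coordinate functional $\phi_i$ with kernel $W_i=\mathrm{span}(u_j:j\ne i)$, so the relevant quotient is $\R^n/W_i$, \emph{not} $V_i/V_{i-1}$. These are abstractly isomorphic one-dimensional spaces, but the image of $D$ in the former depends on the choice of $u_{i+1},\dots,u_n$ (through $W_i$), whereas the quotient norm on $V_i/V_{i-1}$ does not; nothing in the Mahler construction forces the two to agree, and the unit ball of the quotient norm (which is the image of $D\cap V_i$ under $V_i\to V_i/V_{i-1}$) can be strictly smaller than $\pi_i(D)$. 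Here is an explicit failure in $\R^2$: take $\Gamma=\Z^2$ and $D=\{(x,y):|y|\le 10,\ |x-100y|\le1\}$, so that $\|(x,y)\|_D=\max(|y|/10,|x-100y|)$. Then $\lambda_1=1/10$ with $b_1=u_1=(100,1)$, $V_1=\R(100,1)$, and $\lambda_2=1$; both $(1,0)$ and $(1001,10)$ are shortest lattice vectors outside $V_1$, and each completes $u_1$ to a basis of $\Z^2$. If one picks $u_2=(1001,10)$ --- a perfectly valid choice in your inductive construction --- then the lattice point $(999,10)=20u_1-u_2\in D$ has $m_1=20$, whereas $1/\lambda_1=10$; indeed $\max_{x\in D}|\phi_1(x)|=20$, so the image of $D$ in $V_1$ under $\pi_1$ has $\|\cdot\|_D$--radius $20\cdot\|u_1\|_D=2$, not $1$. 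With this basis your per-coordinate bounds give $(2\cdot20+1)(2\cdot1+1)=123$, which exceeds the claimed $\prod(2i/\lambda_i+1)=21\cdot5=105$, so the proof does not close. (Choosing $u_2=(1,0)$ instead makes everything work, but your construction does not single out that choice.)

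The second gap is the inequality $\|\bar u_i\|_Q\ge\lambda_i/i$ itself. Your justification --- take a short representative $u_i-w$ with $w\in V_{i-1}$, ``multiply by an integer of size at most $i$'', and combine with $b_1,\dots,b_{i-1}$ --- does not produce a lattice vector: $w$ is an arbitrary real vector in $V_{i-1}$, and neither $u_i-w$ nor $k(u_i-w)$ is in $\Gamma$. The natural repair, replacing $kw$ by a nearest point of $\Gamma\cap V_{i-1}$, costs up to the covering radius of $\Gamma\cap V_{i-1}$, which by Jarn\'ik's bound is of order $\sum_{j<i}\lambda_j$ and can be as large as $(i-1)\lambda_{i-1}/2$, comparable to or larger than $\lambda_i$; so no contradiction with the definition of $\lambda_i$ is obtained. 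Relatedly, Mahler's upper bound $\|u_i\|_D\le(i/2)\lambda_i$ that you invoke is a bound on the full norm and gives no information about the quotient norm $\|\bar u_i\|_Q$, which is a lower bound you need in the opposite direction --- it is cited but never actually used in the argument. A correct elementary proof of the lemma requires a more careful choice of basis and a different accounting (or a recursion that does not try to bound each $m_i$ independently); as written, the key quantitative claims are asserted rather than established, and one of them is false for a legitimate basis choice.
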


\subsection{Resultant bound}

Let $\Res(P_1, P_2)$ denote the resultant of two polynomials $P_1$ and $P_2$.

\begin{lemma}
\label{lem:Res} Let $H\ge1$,
$\rho,\vartheta\in\R$, and let  $M,N\ge2$ be fixed integers.
 Assume also that one of the
following conditions hold:
\begin{itemize}
\item[(i)] $\rho\ge0$;
\item[(ii)]$\vartheta\ge0$;
\item[(iii)] $\rho+\vartheta\ge -1$.
\end{itemize}
Let
$P_1(Z)$ and $P_2(Z)$ be non-constant polynomials with integer coefficients
$$
P_1(Z)=\sum_{i=0}^{M-1}a_{i}Z^{M-1-i} \mand P_2(Z)=
\sum_{i=0}^{N-1}b_{i}Z^{N-1-i}
$$
such that
$$ |a_{i}|<H^{i+\rho}, \quad i =0, \ldots, M-1,$$
$$ |b_{i}|<H^{i+\vartheta}, \quad i =0, \ldots, N-1.$$
Then
$$
\Res(P_1, P_2)\ll H^{(M-1+\rho)(N-1+\vartheta) - \rho\vartheta},
$$
where the implicit constant in $\ll$ depends only on $M$ and $N$.
\end{lemma}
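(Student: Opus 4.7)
The plan is to reduce to the case of uniformly bounded coefficients via a dilation of the variable, and then apply Hadamard's inequality to the Sylvester determinant. First I would set $\widetilde P_1(W) = P_1(HW)$ and $\widetilde P_2(W) = P_2(HW)$, so that the new coefficient vectors become $\widetilde a_i = a_i H^{M-1-i}$ and $\widetilde b_j = b_j H^{N-1-j}$, and the hypothesis yields the uniform bounds
$$|\widetilde a_i| < H^{M-1+\rho}, \qquad |\widetilde b_j| < H^{N-1+\vartheta},$$
independent of $i$ and $j$. Next I would record the scaling identity
$$\Res_W(\widetilde P_1,\widetilde P_2) = H^{(M-1)(N-1)}\,\Res_Z(P_1,P_2),$$
which follows from the product-over-roots expression $\Res(P,Q) = a_0^{\deg Q}\prod Q(\alpha_i)$ together with the observation that the roots of $\widetilde P_1$ are those of $P_1$ divided by $H$.

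Then I would view $\Res_W(\widetilde P_1,\widetilde P_2)$ as the determinant of the associated $(M+N-2)\times (M+N-2)$ Sylvester matrix. Its first $N-1$ rows are shifts of the coefficient vector of $\widetilde P_1$, each with $\ell^2$-norm $\ll H^{M-1+\rho}$, and its remaining $M-1$ rows are shifts of the coefficient vector of $\widetilde P_2$, each with $\ell^2$-norm $\ll H^{N-1+\vartheta}$. Hadamard's inequality then gives
$$|\Res_W(\widetilde P_1,\widetilde P_2)|\ll H^{(M-1+\rho)(N-1)+(N-1+\vartheta)(M-1)},$$
which, combined with the scaling identity above, yields $|\Res_Z(P_1,P_2)|\ll H^{(M-1)(N-1)+\rho(N-1)+\vartheta(M-1)}$. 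A direct algebraic identity,
$$(M-1)(N-1)+\rho(N-1)+\vartheta(M-1)=(M-1+\rho)(N-1+\vartheta)-\rho\vartheta,$$
then produces exactly the bound claimed in the lemma.

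The main obstacle I anticipate is some bookkeeping around the three alternative hypotheses. The Hadamard estimate itself is insensitive to the signs of $\rho$ and $\vartheta$, but when either is too negative the integrality of the $a_i$ and $b_j$ forces leading coefficients to vanish and the effective degrees of $P_1$ and $P_2$ to drop, so the Sylvester setup above must be adjusted or a trivial bound must be invoked. The three conditions (i) $\rho\ge 0$, (ii) $\vartheta\ge 0$, and (iii) $\rho+\vartheta\ge -1$ are designed precisely to delimit the range in which the above estimate remains valid and the exponent $(M-1+\rho)(N-1+\vartheta)-\rho\vartheta$ continues to dominate; verifying each of the three cases separately is the one careful but essentially idea-free step in the argument.
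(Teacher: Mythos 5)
Your central idea -- dilate $Z\mapsto HW$ to equalize the coefficient scales and then apply Hadamard's inequality to the Sylvester determinant -- is a correct and efficient route, and in the case where $P_1$ and $P_2$ have full degrees $M-1$ and $N-1$ it does give the claimed exponent exactly, since
$$
(M-1+\rho)(N-1)+(N-1+\vartheta)(M-1)-(M-1)(N-1)=(M-1+\rho)(N-1+\vartheta)-\rho\vartheta .
$$
(As a small technical remark, the scaling identity $\Res_W(\widetilde P_1,\widetilde P_2)=H^{(M-1)(N-1)}\Res_Z(P_1,P_2)$ should be justified as a polynomial identity in the coefficients, e.g.\ by Zariski density, rather than via the product-over-roots formula, since the latter presupposes $\deg P_1=M-1$.)

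The genuine gap is that you dismiss the degree-drop case as ``essentially idea-free bookkeeping,'' when in fact it is the only place the three hypotheses and the integrality of the coefficients are used, and it requires a real step. The resultant the paper works with (see the way Lemma~\ref{lem:Res} is invoked in the proof of Lemma~\ref{lem:CommonSols2}) is the resultant at the \emph{actual} degrees, and when $\rho<0$ integrality forces $a_0=0$, so $\deg P_1=M-1-k_1$ with $k_1\ge1$; if in addition $\vartheta<0$ then also $\deg P_2$ drops, and the $(M+N-2)\times(M+N-2)$ Sylvester matrix you bound has an entire column of zeros, so Hadamard only tells you the formal determinant vanishes and says nothing about $\Res(P_1,P_2)$. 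To repair this one must rerun the argument at the true degrees, writing $\rho'=\rho+k_1$, $\vartheta'=\vartheta+k_2$; the key point, which you do not articulate, is that $|a_{k_1}|\ge1$ together with $|a_{k_1}|<H^{k_1+\rho}$ forces $\rho'>0$ (and similarly $\vartheta'>0$). The resulting exponent is $(M-1+\rho)(N-1+\vartheta)-\rho'\vartheta'$, which is at most the stated exponent precisely when $\rho'\vartheta'\ge\rho\vartheta$, i.e.\ $k_1\vartheta+k_2\rho+k_1k_2\ge0$; checking this inequality (using $k_1>-\rho$, $k_2>-\vartheta$) is exactly where conditions~(i), (ii), (iii) enter, and it fails without them. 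Your write-up recognizes that something must be done here but neither identifies the mechanism ($\rho',\vartheta'>0$) nor carries out the verification, so as it stands the proof is incomplete.
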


\subsection{Background on algebraic integers}

Let $\K$ be a finite extension of $\Q$ and let $\ZK$ be the ring of
integers in $\K$. We recall that the logarithmic height of an
algebraic number $\alpha$ is defined as the logarithmic height
$H(P)$ of its minimal polynomial $P$, that is, the maximum logarithm
of the largest (by absolute value) coefficient of $P$.

We  need a bound of Chang~\cite[Proposition~2.5]{Chang1} on the
divisor function in algebraic number fields.

\begin{lemma}
\label{lem:Div ANF} Let $\K$ be a finite extension of $\Q$ of degree
$d = [\K:\Q]$. For any nonzero algebraic integer $\gamma\in \Z_K$ of
logarithmic height at most $H\ge 2$, the number of   pairs
$(\gamma_1, \gamma_2)$ of  algebraic integers $\gamma_1,\gamma_2\in
\Z_K$ of logarithmic  height at most $H$ with
$\gamma=\gamma_1\gamma_2$ is at most $\exp\(O(H/\log H)\)$, where
the implied constant depends on $d$.
\end{lemma}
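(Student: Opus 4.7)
The plan is to pass from element factorizations of $\gamma$ to principal ideal factorizations of $(\gamma)$ in the Dedekind ring $\Z_K$, and then to control the unit ambiguity via Dirichlet's unit theorem together with Lemma~\ref{lem:latpoints}. The point is that ideals in $\Z_K$ factor uniquely, but generators of principal ideals are determined only modulo units, so two counting problems arise: the number of ideal divisors of $(\gamma)$, and the number of unit multiples of a fixed generator that still have bounded logarithmic height.

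First, since $\gamma$ has logarithmic height at most $H$, Cauchy's root bound applied to the minimal polynomial of $\gamma$ yields $|\sigma(\gamma)|\le e^{H+O(1)}$ for every embedding $\sigma\colon\K\hookrightarrow\C$, and in particular $|\Nm\gamma|\le e^{O(H)}$. Writing $(\gamma)=\prod_{\pf}\pf^{e_\pf}$ with residue degrees $f_\pf$, one has $\tau_{\Z_K}((\gamma))=\prod_\pf(e_\pf+1)$. Since at most $d$ prime ideals lie above any rational prime $p$ and $e_\pf\le e_\pf f_\pf\le v_p(|\Nm\gamma|)$ for each $\pf|p$, grouping factors by $p$ gives
$$
\tau_{\Z_K}((\gamma))\le\prod_p\bigl(v_p(|\Nm\gamma|)+1\bigr)^d=\tau(|\Nm\gamma|)^d\le\exp\bigl(O(H/\log H)\bigr)
$$
by the classical Wigert bound.

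Next, for each ideal divisor $\mathfrak d$ of $(\gamma)$ I would count pairs $(\gamma_1,\gamma_2)$ with $(\gamma_1)=\mathfrak d$, $\gamma_1\gamma_2=\gamma$, and both $\gamma_i$ of logarithmic height at most $H$. If $\mathfrak d$ is nonprincipal the count is zero; otherwise, fixing one generator $\alpha$, every admissible $\gamma_1$ is of the form $u\alpha$ with $u\in\Z_K^{\times}$, and then $\gamma_2=u^{-1}\gamma/\alpha$ is forced. By Dirichlet's unit theorem, the logarithmic embedding $\ell(u)=(\log|\sigma_i(u)|)_i\in\R^{r_1+r_2}$ sends $\Z_K^{\times}/\mu_K$ isomorphically onto a lattice $\Gamma$ of rank $r=r_1+r_2-1$ in the hyperplane $\sum x_i=0$. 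The joint height constraints on $u\alpha$ and on $u^{-1}\gamma/\alpha$ force $|\log|\sigma_i(u)||=O(H)$ for every $i$, so $\ell(u)$ lies in a symmetric convex box of side $O(H)$, and Lemma~\ref{lem:latpoints} applied to this box and $\Gamma$ delivers at most $O(H^r)$ admissible $u$ modulo torsion.

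Multiplying the two counts gives the total bound
$$
\tau_{\Z_K}((\gamma))\cdot O(H^r)\le\exp(O(H/\log H)),
$$
since $H^r=\exp(O(\log H))$ is absorbed into the subexponential factor (with the implicit constant allowed to depend on $d$). The main obstacle I anticipate is the careful interchange between logarithmic heights and bounds on Galois conjugates, and the verification that the successive minima of the log-unit lattice $\Gamma$ are bounded below by a constant depending only on $\K$, so that Lemma~\ref{lem:latpoints} actually outputs the clean polynomial estimate $O(H^r)$ rather than a weaker bound that would swamp the divisor count.
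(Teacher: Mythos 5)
The paper does not prove this lemma; it is cited as Chang's Proposition~2.5 from~\cite{Chang1}, so there is no internal argument to compare against. Your reconstruction has the correct architecture: pass to the principal ideal $(\gamma)$, bound the number of ideal divisors by $\tau(|\Nm\gamma|)^d\le\exp(O(H/\log H))$ with constant depending on $d$ via the Wigert bound, and control the unit ambiguity for each principal divisor through the logarithmic embedding and a lattice-point count. The two-sided conjugate bounds $e^{-O(H)}\le|\sigma_i(\gamma_1)|\le e^{O(H)}$ do follow from Cauchy's root bound applied to both $\gamma_1$ and $\gamma_2$ together with $|\Nm\gamma|\ge1$, and you are right that the reference generator $\alpha$ should itself be chosen admissible so that $u=\gamma_1/\alpha$ has all $|\log|\sigma_i(u)||=O(H)$.

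The one point that needs to be pinned down is the one you flag at the end, but your phrasing is a notch too weak. For the lemma's implied constant to depend only on $d$ --- as the statement asserts and as the application in the proof of Theorem~\ref{thm:GProdSet AlmostAll} requires, since there $\K$ ranges over extensions of bounded degree rather than being fixed --- the first successive minimum of the log-unit lattice must be bounded below by a constant depending only on $d$, not merely on the particular field $\K$. This is not automatic; it is a Lehmer-type input, available unconditionally from, say, Dobrowolski's or the Blanksby--Montgomery lower bound on the Mahler measure of a non-torsion algebraic integer of degree at most $d$. Given such a bound, all successive minima of the $O(H)$-scaled box with respect to the unit lattice are at least $c(d)/H$, Lemma~\ref{lem:latpoints} outputs $O(H^{r})$ with $r\le d-1$ and constant depending only on $d$, and the polynomial factor $H^{r}=\exp(O(\log H))$ is absorbed into $\exp(O(H/\log H))$ as you say. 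With that reference added the argument is complete.
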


Very often we use Lemma~\ref{lem:Div ANF}  for $d=1$ when it
gives the classical bound on the usual divisor function.

Now recall that the Mahler measure of a nonzero polynomial
$$P(Z) = a_dZ^d +\ldots+ a_1Z + a_0
= a_d\prod_{j=1}^d(Z - \xi_j) \in\C[Z]
$$
is defined as
$$M(P)= |a_d|\prod_{j=1}^d \max\{1,|\xi_j|\},$$
see~\cite[Chapter~3, Section~3]{Mig}

We recall the following estimates, that follows immediately from a
much more general~\cite[Theorem~4.4]{Mig}:

\begin{lemma}
\label{lem:HeighMahl} For any nonzero polynomial $P$ of degree $d$
the following inequality holds
$$2^{-d}e^{H(P)} \le M(P)\le (d+1)^{1/2} e^{H(P)}.$$
\end{lemma}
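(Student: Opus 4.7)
The plan is to establish the two inequalities separately; both are classical comparisons between the Mahler measure and the sup-norm of the coefficients of $P$. First, I note that by the definition of the logarithmic height we have $e^{H(P)} = \max_{0 \le i \le d} |a_i|$, which I will denote $\|P\|_\infty$ for brevity, so the claim amounts to sandwiching $M(P)$ between constant multiples of $\|P\|_\infty$.

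For the upper bound, my strategy is to chain two classical estimates. Jensen's formula applied to $\log|P(z)|$ on the unit circle, combined with Parseval's identity, yields the Landau-type inequality $M(P)\le \|P\|_2 = \bigl(\sum_i |a_i|^2\bigr)^{1/2}$. Composing this with the trivial bound $\|P\|_2 \le (d+1)^{1/2}\|P\|_\infty$ gives $M(P) \le (d+1)^{1/2} e^{H(P)}$, which is the right-hand inequality.

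For the lower bound, I would invoke Vieta's formulas applied to the factorisation $P(Z) = a_d \prod_{j=1}^d (Z-\xi_j)$. Each coefficient satisfies $a_{d-k} = (-1)^k a_d\, \sigma_k(\xi_1,\ldots,\xi_d)$, where $\sigma_k$ is the $k$-th elementary symmetric polynomial, so
$$
|a_{d-k}| \le \binom{d}{k} |a_d| \prod_{j=1}^d \max\{1,|\xi_j|\} = \binom{d}{k} M(P).
$$
Taking the maximum over $k$ and using $\binom{d}{k}\le 2^d$ gives $\|P\|_\infty \le 2^d M(P)$, i.e.\ $M(P) \ge 2^{-d} e^{H(P)}$, as required.

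Neither direction poses a genuine obstacle; the result is standard. The only point of any subtlety is the inequality $M(P)\le \|P\|_2$, and this is exactly where one either provides a short one-line derivation from Jensen's formula, or simply invokes Mignotte's Theorem~4.4 as the authors do.
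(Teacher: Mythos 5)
Your proof is correct. Where the paper simply invokes~\cite[Theorem~4.4]{Mig} and gives no argument of its own, you have supplied a complete elementary proof along exactly the route one would expect that reference to take: Landau's inequality $M(P)\le\|P\|_2$ (from Jensen's formula) composed with the trivial $\ell^2$--$\ell^\infty$ comparison $\|P\|_2\le (d+1)^{1/2}\|P\|_\infty$ gives the upper bound, while Vieta's formulas and the crude estimate $|\sigma_k(\xi_1,\ldots,\xi_d)|\le\binom{d}{k}\prod_j\max\{1,|\xi_j|\}$ together with $\binom{d}{k}\le 2^d$ give the lower bound. Both steps are airtight; in particular, your observation that each of the $\binom{d}{k}$ monomials in $\sigma_k$ is dominated by $\prod_j\max\{1,|\xi_j|\}$ because the omitted factors contribute at least $1$ is exactly the point that makes the lower bound go through. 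The paper's treatment gains brevity by deferring to Mignotte; yours has the advantage of being self-contained and of making the origin of the two constants $(d+1)^{1/2}$ and $2^{-d}$ transparent.
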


\begin{cor}
\label{cor:twoPol} For any nonzero polynomials $Q_1,Q_2\in\C[Z]$ we
have
$$
H(Q_1Q_2)= H(Q_1) + H(Q_2) +O(d),
$$
where $d=\deg(Q_1Q_2)$.
\end{cor}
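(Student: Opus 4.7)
The plan is to leverage the multiplicativity of the Mahler measure together with the two-sided comparison between $H(P)$ and $\log M(P)$ provided by Lemma~\ref{lem:HeighMahl}. The Mahler measure, as defined on the previous page via the leading coefficient and the roots of $P$, has the well-known property $M(Q_1 Q_2) = M(Q_1)M(Q_2)$, which is immediate from its definition as a product over roots (and leading coefficients). Taking logarithms transforms this multiplicative relation into the additive relation
\begin{equation*}
\log M(Q_1 Q_2) = \log M(Q_1) + \log M(Q_2),
\end{equation*}
which is the correct form to match against heights.

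Next, I would invoke Lemma~\ref{lem:HeighMahl} three times: once for $Q_1$, once for $Q_2$, and once for the product $Q_1 Q_2$. Taking logarithms of the two-sided bound $2^{-d}e^{H(P)} \le M(P) \le (d+1)^{1/2} e^{H(P)}$ gives
\begin{equation*}
H(P) - d\log 2 \;\le\; \log M(P) \;\le\; H(P) + \tfrac{1}{2}\log(d+1),
\end{equation*}
so $\log M(P) = H(P) + O(\deg P)$. Since $\deg Q_1, \deg Q_2 \le d = \deg(Q_1 Q_2)$, all three error terms are $O(d)$.

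Substituting these three relations into the multiplicative identity above yields
\begin{equation*}
H(Q_1 Q_2) + O(d) = H(Q_1) + H(Q_2) + O(d),
\end{equation*}
which rearranges to the claimed $H(Q_1 Q_2) = H(Q_1) + H(Q_2) + O(d)$.

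There is no real obstacle here; the corollary is essentially a bookkeeping consequence of the fact that $\log M$ is an \emph{additive} substitute for the height, up to an $O(d)$ error. The only mild subtlety is checking that the lower and upper bounds from Lemma~\ref{lem:HeighMahl} combine in both directions so as to produce a genuine two-sided $O(d)$ error on $H(Q_1 Q_2) - H(Q_1) - H(Q_2)$, rather than a one-sided inequality, but since Lemma~\ref{lem:HeighMahl} is itself two-sided this is automatic.
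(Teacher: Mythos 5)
Your proof is correct and is precisely the argument the paper intends: the corollary is stated without proof as an immediate consequence of Lemma~\ref{lem:HeighMahl}, and the intended route is exactly the multiplicativity of the Mahler measure combined with the two-sided comparison between $H(P)$ and $\log M(P)$ that Lemma~\ref{lem:HeighMahl} provides.
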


We  also use the following  result from~\cite{BGKS2}.

\begin{lemma}
\label{lem:PolCoef} For any positive integer $\nu$ there is a
constant $C_\nu$ such that the following holds. If $P_1, P_2\in\Z[Z]$,
$P=P_1P_2$,
$$P(Z)=\sum_{j=0}^\nu u_j Z^{\nu-j}$$
and for some $A>0$ and $h>0$ the coefficients of the polynomial $P$
satisfy the inequalities
$$u_0\neq0,\qquad |u_j|\le Ah^j,\qquad j=0,\ldots,\nu,$$
then the polynomial $P_1$ has the form
$$P_1(Z)=\sum_{j=0}^\mu v_j Z^{\mu-j}$$
with
$$v_0\neq0,\quad|v_j|\le C_\nu Ah^j\quad(j=0,\ldots,\mu).$$
\end{lemma}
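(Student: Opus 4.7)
The approach is to rescale $Z \mapsto hY$, so that the coefficient bounds on $P$ become uniform in $h$, and then invoke the two-sided estimate of Lemma~\ref{lem:HeighMahl} together with the multiplicativity of the Mahler measure.

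Concretely, set
$$
Q(Y) = h^{-\nu} P(hY), \qquad Q_1(Y) = h^{-\mu} P_1(hY), \qquad Q_2(Y) = h^{-(\nu-\mu)} P_2(hY),
$$
where $\mu = \deg P_1 \le \nu$, so that $Q = Q_1 Q_2$ and
$$
Q(Y) = \sum_{j=0}^{\nu} u_j h^{-j} Y^{\nu-j}, \qquad Q_1(Y) = \sum_{j=0}^{\mu} v_j h^{-j} Y^{\mu-j}.
$$
The hypothesis $|u_j| \le A h^j$ then gives $H(Q) \le \log A$, and Lemma~\ref{lem:HeighMahl} yields $M(Q) \le (\nu+1)^{1/2} A$. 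Crucially, the leading coefficients of $Q_1$ and $Q_2$ coincide with those of $P_1$ and $P_2$ respectively and are therefore nonzero integers, so $M(Q_1), M(Q_2) \ge 1$. By multiplicativity $M(Q) = M(Q_1) M(Q_2)$, so $M(Q_1) \le M(Q) \le (\nu+1)^{1/2} A$.

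Applying the opposite direction of Lemma~\ref{lem:HeighMahl} to $Q_1$ of degree $\mu \le \nu$ gives $e^{H(Q_1)} \le 2^{\mu} M(Q_1) \le 2^{\nu} (\nu+1)^{1/2} A$. Since $|v_j h^{-j}| \le e^{H(Q_1)}$ for each $j$, this delivers $|v_j| \le C_\nu A h^j$ with $C_\nu = 2^{\nu} (\nu+1)^{1/2}$, while $v_0 \ne 0$ is automatic, being the leading coefficient of the degree-$\mu$ polynomial $P_1$. I do not anticipate any serious obstacle; the only point requiring care is the bound $M(Q_2) \ge 1$, which is precisely where the integrality of the factors $P_1, P_2$ enters essentially, allowing one to pass from $M(Q_1)M(Q_2)$ to $M(Q_1)$ with no loss.
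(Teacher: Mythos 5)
Your proof is correct, and since the paper cites this lemma from \cite{BGKS2} without reproducing the argument, there is no in-paper proof to compare against; however, your rescaling $Z\mapsto hY$ combined with the two-sided bound of Lemma~\ref{lem:HeighMahl}, multiplicativity of the Mahler measure, and the observation that $Q_2$ has a nonzero integer leading coefficient (hence $M(Q_2)\ge 1$) is exactly the machinery the paper assembles in the preceding paragraphs, and it cleanly yields the bound with $C_\nu = 2^\nu(\nu+1)^{1/2}$. The only small point worth making explicit is that the hypotheses force $A\ge 1$ (since $1\le|u_0|\le A$), so $\max_j|u_jh^{-j}|\le A$ is indeed a valid height bound for $Q$.
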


\subsection{Sieve arguments}

We start with recalling the following result of Iwaniec~\cite{Iwa}:

\begin{lemma}\label{Iwaniec} Let   $q_1,\ldots,q_r$ be $r\ge 2$
distinct primes. Then the number of consecutive integers each divisible
by at least one of  $q_1,\ldots,q_r$   is $O(r^2\log^2r)$.
\end{lemma}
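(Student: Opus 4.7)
My plan is to establish the bound by a one-dimensional sieve argument applied to the run of consecutive integers. Writing the run as $I = (M, M+L]$, the hypothesis says that every $n\in I$ is divisible by some $q_i$, i.e., $I$ contains no integer coprime to $Q = q_1\cdots q_r$. The target is $L \ll r^2\log^2 r$.

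I would introduce a cutoff $z$ (to be chosen at the end) and split the primes into \emph{small} ones ($q_i \le z$) and \emph{large} ones ($q_i > z$). Large primes can be handled trivially: each large $q_i$ divides at most $\lfloor L/q_i\rfloor + 1 \le L/z + 1$ integers in $I$, so large primes account for at most $rL/z + r$ of the covered integers. Consequently, every integer in $I$ that is \emph{not} divisible by any small $q_i$ must be divisible by some large one, which forces
\[
\#\{n\in I : q_i \nmid n\text{ for all }q_i \le z\} \le \frac{rL}{z} + r.
\]

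Next, I would apply a lower-bound sieve (a one-dimensional linear sieve of Rosser--Iwaniec type) to the same cardinality. Setting $V(z) = \prod_{q_i \le z}(1 - 1/q_i)$, which satisfies $V(z) \gg 1/\log z$ by Mertens' theorem, the sieve yields a lower bound of the form $LV(z) - E$, where $E$ is the sieve error controlled by the level of distribution. Combining the two inequalities,
\[
LV(z) \le \frac{rL}{z} + r + E.
\]
Choosing $z$ a little larger than $r\log r$ ensures $rL/z \le \tfrac12 LV(z)$, so this term is absorbed on the left and one is left with $L \ll (r + E)\log z$.

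The main obstacle is the control of $E$. A naive Selberg sieve is too lossy, because the number of squarefree divisors supported on $\{q_i : q_i \le z\}$ can be as large as $2^{\pi(z)}$, ruining the error sum. The substance of Iwaniec's theorem \cite{Iwa} is to handle this by using a carefully weighted sieve of dimension one, whose combinatorial structure reduces $E$ to essentially polynomial size in $z$ and simultaneously preserves the sharp main term. With $E \ll z^2$ and the choice $z \asymp r\log r$, one obtains $L \ll r^2\log^2 r$. I would therefore follow the weighted-sieve analysis of \cite{Iwa} to carry out this refined estimate and complete the proof.
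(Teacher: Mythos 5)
The paper itself does not prove this lemma: it is stated as a recalled result of Iwaniec (reference~\cite{Iwa}) on the Jacobsthal function and is used as a black box. Your proposal is therefore being compared not against a proof in the paper but against the fact that the paper simply cites~\cite{Iwa}, which is what your final sentence does as well.

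As a sketch, however, your argument as written does not reach the claimed bound, and the gap is worth pinpointing. Granting $E\ll z^{2}$ and $V(z)\gg 1/\log z$, the balance
$$LV(z)\le \frac{rL}{z}+r+E$$
with $z$ chosen so that $rL/z\le\tfrac12 LV(z)$ forces $z/\log z\gg r$, hence $z\gg r\log r$. The surviving inequality is $L\ll E\log z\ll z^{2}\log z$, which with $z\asymp r\log r$ gives $L\ll r^{2}\log^{3}r$ --- one logarithm too many. The loss is not in the error term $E$ but in the main term: $V(z)\asymp 1/\log z$ is unavoidable by Mertens, so the ``$\log z$'' reappears when you divide through. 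Shaving that last logarithm is exactly the nontrivial content of~\cite{Iwa}; it is not obtained by merely making $E$ polynomial in $z$, and a Rosser--Iwaniec linear sieve at level $D=z^{2}$ already achieves $E=O(z^{2})$ without any special combinatorial trick (your diagnosis that the difficulty is divisor counts of size $2^{\pi(z)}$ is a red herring --- truncated sieve weights never see those). So the proposal is a reasonable outline of the standard one-dimensional sieve setup, but the crucial refinement that converts $r^{2}\log^{3}r$ into $r^{2}\log^{2}r$ is precisely the step you defer to Iwaniec, and until that step is supplied the argument is not self-contained. Given that the paper also treats the lemma as imported, citing~\cite{Iwa} directly --- as the paper does --- is the cleaner choice.
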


\begin{cor}\label{mult_ind} There is an absolute constant $c>0$ such that
for any positive integer $m\ge2$ and $a\in\N$ there are  at least $c\sqrt m/\log m$
multiplicatively independent numbers $x \in [a, a+m)$.
\end{cor}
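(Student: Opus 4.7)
The plan is to construct a pairwise coprime subset of $S:=[a,a+m)\cap\Z_{\ge 2}$ of size $\gg\sqrt m/\log m$; such a subset is automatically multiplicatively independent, since in any relation $\prod x_i^{n_i}=1$, taking the $p$-adic valuation at a prime $p$ dividing (the unique) $x_i$ forces $n_i=0$.

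I would proceed greedily. Suppose $x_1,\dots,x_{k-1}\in S$ have been chosen pairwise coprime, and let $Q_{k-1}$ denote the set of primes dividing at least one $x_i$. Any valid next choice $x_k\in S$ must be divisible by no prime in $Q_{k-1}$. If no such $x_k$ exists, then every integer in $[a,a+m)$ is divisible by at least one prime of $Q_{k-1}$, and Lemma~\ref{Iwaniec} forces
\[
m \le C\,|Q_{k-1}|^2\log^2|Q_{k-1}|,
\]
so that $|Q_{k-1}|\gg\sqrt m/\log m$.

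The main obstacle is then to turn this lower bound on $|Q_{k-1}|$ into a comparable lower bound on $k-1$: the crude estimate $|Q_{k-1}|\le\sum_{i<k}\omega(x_i)$ is useless if the $\omega(x_i)$ are allowed to grow with $\log(a+m)$. I would close this gap by additionally insisting, at each greedy step, that $x_k$ have $\omega(x_k)=O(1)$. When $a$ is not too large compared with $m$ (for instance $a\le m^{1.9}$, a range in which Baker--Harman--Pintz guarantees primes in the interval), take $x_k$ to be prime; in the remaining regime, combine Iwaniec's lemma with a short-interval linear sieve to produce $x_k\in[a,a+m)$ with $\omega(x_k)=O(1)$ coprime to $x_1\cdots x_{k-1}$. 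Once $\omega(x_k)=O(1)$ is enforced throughout, one has $|Q_{k-1}|=O(k-1)$, and the displayed Iwaniec inequality rearranges to $k-1\gg\sqrt m/\log m$, producing the required multiplicatively independent set.
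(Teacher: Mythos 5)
Your diagnosis of the obstacle is exactly right, but the proposed fix does not work, and the paper gets around the problem in a cleaner way that is worth noting.

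The pairwise-coprime route forces you to avoid, at step $k$, \emph{every} prime dividing $x_1\cdots x_{k-1}$, and as you observe this set can have size $\sum_{i<k}\omega(x_i)$, which may grow like $k\log(a+m)/\log\log(a+m)$. Your repair is to insist $\omega(x_k)=O(1)$, but that is not something any uniform short-interval result provides. For $m$ much smaller than any fixed power of $a$ (in particular $m$ bounded and $a\to\infty$), it is not known that $[a,a+m)$ contains even a single integer with $\omega$ bounded by an absolute constant: sieve upper bounds show the expected count is $\ll m(\log\log a)^{O(1)}/\log a\to 0$, and lower-bound almost-prime results (Chen-type, Richert, Baker--Harman--Pintz for primes) all require interval length $\gg a^{\theta}$ for some fixed $\theta>0$. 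So ``combine Iwaniec with a short-interval linear sieve'' is not a step you can carry out; and even a correct version would be proving a strictly stronger (and likely unprovable) statement than what is needed.

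The paper's proof avoids the issue entirely by not demanding pairwise coprimality. At step $i$ it records just \emph{one} prime $q_i\mid x_i$, and chooses $x_{i+1}\in[a,a+m)$ avoiding only $q_1,\ldots,q_i$ --- that is $i$ primes, not $\sum_{j\le i}\omega(x_j)$. Iwaniec's lemma then lets the process run for $r+1\gg\sqrt m/\log m$ steps with no constraint on $\omega(x_i)$. Multiplicative independence follows not from coprimality but from a triangular argument: in a relation $x_1^{n_1}\cdots x_{r+1}^{n_{r+1}}=1$, the prime $q_1$ divides $x_1$ but none of $x_2,\ldots,x_{r+1}$, forcing $n_1=0$; then $q_2$ divides $x_2$ but none of $x_3,\ldots,x_{r+1}$, forcing $n_2=0$; and so on in order. (Note $q_i$ may well divide some \emph{earlier} $x_j$, $j<i$; the argument only needs it to miss the later ones, which is exactly what the greedy choice guarantees.) If you replace pairwise coprimality with this ``one tagged prime per element'' bookkeeping, your greedy argument becomes the paper's proof.
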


\begin{proof} Without loss of generality we can assume that $a>1$. By Lemma~\ref{Iwaniec},
one can take $r\gg\sqrt m/\log m$ so that for any primes $q_1,\ldots,q_r$
there is a number $x\in[a,a+m)$ not divisible by $q_1,\ldots,q_r$.
We  show the existence of $r+1$ multiplicatively independent numbers
$x \in [a, a+m)$ using  recursive
procedure. First, we take an arbitrary
integer $x_1\in[a,a+m)$ and an arbitrary prime divisor $q_1$ of $x_1$.
Assuming that for some $i=1,\ldots,r$ the numbers $x_1,\ldots,x_i$
and prime divisors $q_j$
of $x_j$ are chosen we use Lemma~\ref{Iwaniec}
to take $x_{i+1}\in[a,a+m)$ not divisible by $q_1,\ldots,q_i$ and
an arbitrary prime divisor $q_{i+1}$ of $x_{i+1}$. Let
integers $n_1,\ldots,n_{r+1}$ satisfy the equality
$$x_1^{n_1}\ldots x_{r+1}^{n_{r+1}}=1.$$
Since $x_1$ is the only number from $x_1,\ldots,x_{r+1}$ that is divisible
by $q_1$ we deduce that $n_1=0$. Similarly, as
$x_2$ is the only number from
$x_2,\ldots,x_{r+1}$ that is divisible by $q_2$ we
conclude that $n_2=0$,
and so on. Finally, we get $n_1=\ldots=n_{r+1}=0$ as required.
\end{proof}

We also recall the celebrated Brun--Titchmarsh theorem, see~\cite[Theorem~6.6]{IwKow}.

\begin{lemma} \label{BrunTitchmarsh}
For any integer $q$ and real $y$ with $y\ge 2q > 0$
the number $\pi(x,q,1)$ of primes $p\le y$ with $p\equiv1\pmod q$ does not exceed
$$
\pi(x,q,1) \le \frac{2y}{\varphi(q)\log(y/q)}\(1+O\(\frac1{\log(y/q)}\)\).
$$
\end{lemma}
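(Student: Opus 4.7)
The statement is the classical Brun--Titchmarsh theorem for the progression $a=1$, and my plan is to deduce it via Selberg's upper bound sieve. Set $\cA = \{n \le y : n \equiv 1 \pmod q\}$, so $|\cA| = y/q + O(1)$, and sift $\cA$ by primes $\ell \le z$ with $\ell \nmid q$; each such $\ell$ picks out exactly one residue class modulo $\ell q$ inside $\cA$, so the local density is $1/\ell$. Since every prime $p$ with $\sqrt{y} < p \le y$ and $p \equiv 1 \pmod q$ lies in $\cA$ and has no prime factor $\le \sqrt{y}$ (and $p \nmid q$ as $p\equiv 1 \pmod q$), taking $z \le \sqrt{y}$ gives
\[
\pi(y;q,1) \;\le\; S(\cA; z) + O(\sqrt{y}).
\]

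The Selberg $\Lambda^2$ construction with level $z$ then yields
\[
S(\cA; z) \;\le\; \frac{|\cA|}{G(z)} + \sum_{\substack{d_1,d_2 \le z \\ (d_1 d_2,\, q)=1}} |\lambda_{d_1}||\lambda_{d_2}|\, r(\cA, [d_1,d_2]),
\]
where $r(\cA,d) = |\{n\in\cA: d \mid n\}| - |\cA|/d = O(1)$ whenever $(d,q)=1$, and
\[
G(z) \;=\; \sum_{\substack{d \le z,\ d \text{ squarefree}\\ (d,q)=1}} \prod_{\ell \mid d} \frac{1}{\ell - 1}.
\]
The remainder contributes $O(z^2)$, and a standard Euler-product computation (Mertens-type) gives the uniform estimate $G(z) = (\varphi(q)/q)\log z + O(1)$.

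To obtain the sharp constant $2$, I would choose the sieve parameter $z$ slightly below $\sqrt{y}$, say $z^2 = (y/q)/\log^2(y/q)$, so that the error $O(z^2)$ is absorbed into the secondary term and $\log z = \tfrac12 \log(y/q) + O(\log\log(y/q))$. Substituting into $|\cA|/G(z)$ produces
\[
S(\cA;z) \;\le\; \frac{y/q}{(\varphi(q)/q)\cdot\tfrac12\log(y/q)} \left(1 + O\!\left(\frac{1}{\log(y/q)}\right)\right) \;=\; \frac{2y}{\varphi(q)\log(y/q)}\left(1 + O\!\left(\frac{1}{\log(y/q)}\right)\right),
\]
which, together with the negligible $O(\sqrt{y})$ contribution from small primes, yields the claimed bound.

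The main obstacle is establishing uniformity in $q$ essentially up to $q \le y/2$: both the asymptotic for $G(z)$ and the control of the Selberg weights $\lambda_d$ must be performed with constants independent of $q$. This uniformity, rather than the sieve set-up itself, is the substantive content of Brun--Titchmarsh and is what distinguishes it from a naive sieve of Eratosthenes bound; the cleanest route in practice is simply to invoke the version worked out in~\cite[Theorem~6.6]{IwKow}, as the authors do.
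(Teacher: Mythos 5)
The paper gives no proof of this lemma; it is stated as a citation of~\cite[Theorem~6.6]{IwKow}, which is exactly what you fall back to at the end of your proposal, so in substance your approach matches the paper's. Your Selberg-sieve sketch is a correct outline of how one proves it, and your closing caveat is well placed: the claim $G(z) = (\varphi(q)/q)\log z + O(1)$ uniformly in $q$ (more precisely, the matching lower bound for $G(z)$) is precisely where the technical work lives, since naively truncating the M\"obius sum over divisors of $q$ at $z$ does not obviously recover the factor $\varphi(q)/q$ when $q$ has many prime factors; this is what the Iwaniec--Kowalski treatment handles carefully.
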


\subsection{Distribution of divisors of shifted primes}

We need several results about divisors of shifted primes which follows from
the arguments of Ed{\H o}s and Murty~\cite{ErdMur}.

First of all we note that by~\cite[Theorem~2]{ErdMur} we have:

\begin{lemma}
\label{lem:ErdMur1}
For any function $\eta(z)>0$ with $\eta(z)\to 0$ as $z\to \infty$ and
for $T\to\infty$, for all but $o(\pi(T))$ primes $p\le T$ the number $p-1$ has no
divisor in $[T^{1/2-\eta(T)}, T^{1/2+\eta(T)}]$. \end{lemma}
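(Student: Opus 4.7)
The plan is to apply the Brun--Titchmarsh estimate from Lemma~\ref{BrunTitchmarsh} directly to the exceptional set. Let $\cE(T)$ denote the set of primes $p\le T$ such that $p-1$ has at least one divisor in $I=[T^{1/2-\eta(T)},T^{1/2+\eta(T)}]$. Each such prime lies in the arithmetic progression $1\pmod d$ for some $d\in I$, whence
$$
|\cE(T)|\le\sum_{d\in I}\pi(T,d,1).
$$

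For every $d\in I$ we have $\log(T/d)\sim \tfrac12\log T$, so Lemma~\ref{BrunTitchmarsh} yields $\pi(T,d,1)\ll T/(\varphi(d)\log T)$. Combined with the classical Mertens-type estimate $\sum_{d\le x}1/\varphi(d)=C\log x+O(1)$ for an absolute constant $C>0$, which implies $\sum_{d\in I}1/\varphi(d)\ll \eta(T)\log T+1$, this gives
$$
|\cE(T)|\ll\eta(T)\, T+\frac{T}{\log T}.
$$

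The hard part is that the above inequality is only $o(\pi(T))$ when $\eta(T)=o(1/\log T)$, since the term $\eta(T)T$ otherwise dominates $\pi(T)$. To cover slowly decaying $\eta$, I would follow the strategy of Erd{\H o}s and Murty~\cite[Theorem~2]{ErdMur}: restrict to divisors $d$ of prescribed multiplicative profile (for instance, writing $d=qd'$ with $q$ a prime in a short dyadic range) and run a Cauchy--Schwarz/second-moment argument, controlling $\sum_{p\le T}\bigl(\#\{d\in I:d\mid p-1\}\bigr)^2$ via Brun--Titchmarsh applied to the moduli $\mathrm{lcm}(d_1,d_2)$ for $d_1,d_2\in I$. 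Since these lcms are typically of size $T^{1-o(1)}$, each such pair contributes a negligible count, and the resulting saving is enough to convert the bound into $|\cE(T)|=o(\pi(T))$ uniformly for every admissible $\eta$.
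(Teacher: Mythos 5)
The paper does not actually prove this lemma: it is quoted verbatim from Erd\H{o}s--Murty, cited as~\cite[Theorem~2]{ErdMur}, with no argument supplied. So there is no proof in the paper to compare against, and your proposal should be judged on whether it stands alone. It does not. The first half is a correct and useful observation: the direct Brun--Titchmarsh bound gives $|\cE(T)|\ll \eta(T)T + o(T/\log T)$, and the term $\eta(T)T$ is $o(\pi(T))$ only when $\eta(T)\log T\to 0$, which is the easy (and for the paper's applications, irrelevant) regime. For slowly decaying $\eta$ this single term already exceeds $\pi(T)$, so the naive count genuinely fails.

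The second half, however, is a sketch too vague to be a proof, and as written contains two real errors. First, a second moment bound on $S_2=\sum_p f(p)^2$, where $f(p)=\#\{d\in I: d\mid p-1\}$, cannot by itself give an \emph{upper} bound on $\#\{p:f(p)\ge 1\}$: Cauchy--Schwarz yields $|\cE(T)|\ge S_1^2/S_2$, which goes the wrong way, and the trivial bound $|\cE(T)|\le S_1$ is exactly the first-moment estimate you already noted is insufficient. Second, the claim that the lcms are ``typically of size $T^{1-o(1)}$'' ignores the diagonal $d_1=d_2$, which already contributes $\sum_{d\in I}\pi(T,d,1)\asymp \eta(T)T$ to $S_2$ (the very term you were trying to beat), as well as the near-diagonal pairs with large $\gcd(d_1,d_2)$; so the asserted ``saving'' is not demonstrated. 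Closing these gaps requires the actual structure of the Erd\H{o}s--Murty argument --- restricting to divisors $d$ with the largest prime factor in a controlled range and running a sieve over $p$ rather than a bare second moment --- which you gesture at but do not supply. In effect the proposal reduces to the same citation the paper makes, preceded by a correct but preliminary observation about why the first-moment bound is not enough.
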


Furthermore, it is easy to see that
the arguments used in the proof of~\cite[Lemma~1]{ErdMur}
also lead to the following result (in particular,
using the notation of~\cite{ErdMur}, one can notice
that the fact that $\nu = - \log \varepsilon(x)$ is not used in the proof).

\begin{lemma}
\label{lem:ErdMur2} There is an absolute constant $c_0$ such that
for any   $0<\alpha\le\gamma\le 1$
for all but $c_0\(\alpha \gamma^{-1} + 1/\log T\) \pi(T)$ primes $p\le T$
the product of all prime factors of $p-1$ that are smaller than
$T^{\alpha}$ is at most $T^{\gamma}$.
\end{lemma}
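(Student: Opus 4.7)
The plan is a first-moment (Markov) argument on the logarithm of the small-prime part of $p-1$. Set
$$
R(p) = \prod_{\substack{q \mid p-1 \\ q \le T^\alpha}} q,
$$
and let $\mathcal{B} = \{p \le T : R(p) > T^\gamma\}$ denote the exceptional set. Since $\log R(p) > \gamma \log T$ for every $p \in \mathcal{B}$, it suffices to bound $S := \sum_{p \le T} \log R(p)$ from above, and Markov's inequality then yields $|\mathcal{B}| \le S/(\gamma \log T)$.

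Interchanging the order of summation gives
$$
S = \sum_{\substack{q \le T^\alpha \\ q \text{ prime}}} (\log q)\, \pi(T;q,1),
$$
where $\pi(T;q,1)$ is the number of primes $p\le T$ with $p \equiv 1 \pmod q$. I would then apply the Brun--Titchmarsh inequality (Lemma~\ref{BrunTitchmarsh}) to each inner count: in the main range $q \le T^{1/2}$ it yields $\pi(T;q,1) \ll T/(q\log T) \ll \pi(T)/q$. Combined with Mertens's estimate $\sum_{q \le T^\alpha} (\log q)/q = \alpha \log T + O(1)$, this gives
$$
S \ll \alpha\, \pi(T)\log T + O(\pi(T)),
$$
and hence
$$
|\mathcal{B}| \ll \frac{\alpha}{\gamma}\,\pi(T) + \frac{\pi(T)}{\gamma\log T},
$$
which matches the claim once the second term is absorbed into the $\pi(T)/\log T$ buffer (note that the conclusion is trivial as soon as $\gamma \lesssim 1/\log T$).

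The main obstacle I foresee is controlling primes $q$ close to $T$ in the tail range $T^{1/2} < q \le T^\alpha$, where the factor $\log(T/q)$ in the denominator of Brun--Titchmarsh degenerates. This tail is non-empty only when $\alpha > 1/2$; I would handle it by a dyadic decomposition of $(T^{1/2}, T^\alpha]$, applying the full form of Brun--Titchmarsh to each block, and verifying that the total contribution remains $O(\alpha\, \pi(T)\log T)$, or else observing that $\alpha$ is then so close to $1$ that the stated bound is automatically trivial. A secondary technical delicacy is ensuring the final correction is $1/\log T$ rather than the $1/(\gamma\log T)$ produced directly by Markov; this is resolved by separately verifying the trivial range $\gamma = O(1/\log T)$, where the right-hand side already dominates $\pi(T)$.
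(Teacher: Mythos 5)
The paper gives no explicit proof here; it cites the argument in Erd\H os--Murty's Lemma~1, which is a first-moment computation of exactly the kind you propose, so your route is essentially the intended one. The interchange of summation, Brun--Titchmarsh, and Mertens steps are all fine, and they do yield $\#\mathcal{B} \ll (\alpha/\gamma)\pi(T) + \pi(T)/(\gamma\log T)$. However, your resolution of the remaining discrepancy between $1/(\gamma\log T)$ and $1/\log T$ is incorrect. You assert that the conclusion is trivial once $\gamma$ is of order $1/\log T$, i.e.\ that the right-hand side then already exceeds $\pi(T)$; but the right-hand side is $c_0(\alpha/\gamma + 1/\log T)\pi(T)$, and if $\alpha/\gamma$ is small (say $\alpha = \gamma^2$ with $\gamma$ of order $1/\log T$) this is $\ll \pi(T)/\log T$, a genuinely nontrivial bound, whereas your $\pi(T)/(\gamma\log T)$ term is then of order $\pi(T)$ and gives nothing.

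The dichotomy should be made on $\alpha$, not on $\gamma$. If $T^\alpha < 2$, i.e.\ $\alpha < (\log 2)/\log T$, there is no prime below $T^\alpha$ at all, so the exceptional set is empty and the lemma is vacuous. Otherwise $\alpha\log T \ge \log 2$, so the additive error $O(T/\log T)=O(\pi(T))$ in your bound for $S$ is itself $O(\alpha T)=O(\alpha\pi(T)\log T)$ and is absorbed into the main term; Markov then gives $\#\mathcal{B} \ll (\alpha/\gamma)\pi(T)$ outright, and the $1/\log T$ slack in the statement is never even needed. Two smaller remarks. First, ``product of all prime factors'' must be read with multiplicity: it is the $T^\alpha$-smooth part of $p-1$, which is what is needed in the proof of Theorem~\ref{thm: small interval large order} to split $p-1 = AB$ with $B$ free of small primes. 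Your $R(p)$ is the radical; one must also account for $\sum_{q\le T^\alpha}(\log q)\sum_{k\ge 2}\pi(T;q^k,1)$, which is again $O(\pi(T))$ (Brun--Titchmarsh for $q^k\le T^{1/2}$, the trivial bound $T/q^k$ otherwise) and absorbed by the same dichotomy on $\alpha$. Second, the dyadic decomposition you anticipate for $q>T^{1/2}$ is unnecessary: for $\alpha>1/2$ one has $\alpha/\gamma>1/2$ since $\gamma\le 1$, so the stated bound is vacuous once $c_0\ge 2$, and one may assume $\alpha\le 1/2$ throughout.
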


\begin{cor}
\label{cor:ErdMur2} For any constant $c$ and function $\eta(z)>0$ with $\eta(z)\to 0$ as $z\to \infty$ and
for $T\to\infty$,  for all but $o(\pi(T))$ primes $p\le T$ the product of all prime
factors of $p-1$ that are smaller than $T^{c\eta(T)/\log(1/\eta(T))}$ is at
most $T^{\eta(T)/2}$.
\end{cor}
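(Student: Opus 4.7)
The plan is to derive Corollary \ref{cor:ErdMur2} as a direct specialisation of Lemma \ref{lem:ErdMur2} by choosing $\alpha$ and $\gamma$ appropriately in terms of the given function $\eta(T)$ and the constant $c$.

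Concretely, I would set
$$
\alpha = \alpha(T) = \frac{c\,\eta(T)}{\log(1/\eta(T))} \mand \gamma = \gamma(T) = \frac{\eta(T)}{2},
$$
so that $T^\alpha$ is exactly the threshold appearing in the corollary and $T^\gamma$ is the target bound for the product. Before applying the lemma I must check the hypothesis $0 < \alpha \le \gamma \le 1$. Both positivity and $\gamma\le 1$ are automatic for $T$ large once $\eta(T)\le 2$. The inequality $\alpha \le \gamma$ reduces to $\log(1/\eta(T)) \ge 2c$, which is valid for all sufficiently large $T$ since $\eta(T)\to 0$; the finitely many exceptional small $T$ can be absorbed into the $o(\pi(T))$ term.

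Plugging these choices into Lemma \ref{lem:ErdMur2}, the number of exceptional primes $p\le T$ is at most
$$
c_0\(\frac{\alpha(T)}{\gamma(T)} + \frac{1}{\log T}\)\pi(T) = c_0\(\frac{2c}{\log(1/\eta(T))} + \frac{1}{\log T}\)\pi(T).
$$
Since $\eta(T)\to 0$ implies $\log(1/\eta(T))\to\infty$, and $1/\log T\to 0$, the bracketed factor tends to $0$ as $T\to\infty$, so the exceptional set has cardinality $o(\pi(T))$, as required.

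There is essentially no obstacle here; the corollary is a routine unpacking of Lemma \ref{lem:ErdMur2} once one notices that the two free parameters $\alpha,\gamma$ can be taken to depend on $T$ and that the smallness of $\alpha/\gamma$ is governed by $1/\log(1/\eta(T))$. The only point that requires minor care is verifying the ordering $\alpha \le \gamma$, which is why the logarithmic factor $\log(1/\eta(T))$ appears in the denominator of the exponent in the statement.
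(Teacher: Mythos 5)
Your proof is correct and is precisely the intended specialisation of Lemma~\ref{lem:ErdMur2}: the paper states the corollary without a separate proof because exactly this direct substitution of $\alpha$ and $\gamma$ is all that is required. One small point of emphasis on your final remark: the factor $\log(1/\eta(T))$ in the denominator of the exponent is there not so much to secure the ordering $\alpha\le\gamma$ (any constant smaller than $1/2$ in place of $c/\log(1/\eta(T))$ would do that) but, as your middle computation already shows, to force $\alpha/\gamma = 2c/\log(1/\eta(T))\to 0$, which is what upgrades the exceptional-set bound from $O(\pi(T))$ to $o(\pi(T))$.
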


We remark that~\cite[Theorem~2]{PomShp} gives a stronger
version of Lemma~\ref{lem:ErdMur2}  in some range of parameters $\alpha$ and $\gamma$.
However for our applications, it is essential  to  have
an  unrestricted range of
parameters $\alpha$ and $\gamma$, thus  Lemma~\ref{lem:ErdMur2}
is more suitable for our goal.

\begin{lemma}
\label{lem:NumDiv}
Let $0<\alpha\le\beta<1\le\lambda$. Then
for $T\to\infty$, for all but at most $(1/\lambda+o(1))\pi(T)$
primes $p\le T$ the number of prime divisors $q$ of $p-1$
satisfying $q\ge T^\alpha$ does not exceed
$$
\#\{q~\text{prime}~:~q \mid p-1, \ q\ge T^\alpha\}
\le \frac{2\lambda\log(\beta/\alpha)}{1-\beta}+\frac{1}{\beta}.
$$
\end{lemma}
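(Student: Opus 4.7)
The plan is to split the prime divisors $q \mid p-1$ with $q \ge T^\alpha$ at the threshold $T^\beta$ and treat the two resulting ranges separately.

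The large range $q > T^\beta$ admits a trivial deterministic bound, with no exceptional set: if $q_1,\dots,q_k$ are distinct primes dividing $p-1$ each exceeding $T^\beta$, then
$$T^{k\beta} < q_1 \cdots q_k \le p-1 < T,$$
so $k < 1/\beta$. This accounts for the $1/\beta$ term of the final bound, uniformly in $p \le T$.

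The small range $T^\alpha \le q \le T^\beta$ is handled by a first moment argument. Let $N(p)$ denote the number of prime divisors of $p-1$ lying in this range. Swapping summation orders,
$$\sum_{p \le T} N(p) \;=\; \sum_{T^\alpha \le q \le T^\beta} \#\{p \le T : p \equiv 1 \pmod q\}.$$
Since $q \le T^\beta$ gives $\log(T/q) \ge (1-\beta)\log T$, Brun--Titchmarsh (Lemma~\ref{BrunTitchmarsh}) applied uniformly across $q$, combined with Mertens' theorem
$$\sum_{\substack{q\ \text{prime}\\T^\alpha \le q \le T^\beta}} \frac{1}{q} = \log(\beta/\alpha) + o(1),$$
produces
$$\sum_{p \le T} N(p) \;\le\; \frac{2\log(\beta/\alpha)}{1-\beta}\,\pi(T)\,(1+o(1)).$$
Markov's inequality at threshold $M = \frac{2\lambda\log(\beta/\alpha)}{1-\beta}$ then yields an exceptional set of size at most $M^{-1}\sum_p N(p) = (1/\lambda + o(1))\pi(T)$, outside of which $N(p) < M$. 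Adding the large-range contribution shows the total number of prime divisors $q \mid p-1$ with $q \ge T^\alpha$ is strictly below $M + 1/\beta$, matching the asserted upper bound.

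This argument is a routine combination of a sieve-theoretic first moment (via Brun--Titchmarsh and Mertens) with Markov's inequality; there is no substantial obstacle. The minor points requiring attention are the uniformity of the Brun--Titchmarsh factor across the full range $q \in [T^\alpha, T^\beta]$ and the degenerate edge cases $\alpha = \beta$ (where the small range contributes nothing and $\log(\beta/\alpha) = 0$) and $\lambda = 1$ (where the exceptional-set bound becomes $(1+o(1))\pi(T)$, which is trivially valid).
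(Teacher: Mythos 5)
Your proposal is correct and follows essentially the same route as the paper's proof: split the range at $T^\beta$, apply Brun--Titchmarsh together with Mertens to bound the first moment $\sum_{p\le T} N(p)$ over the middle range $[T^\alpha, T^\beta)$, invoke Markov's inequality at the threshold $\frac{2\lambda\log(\beta/\alpha)}{1-\beta}$, and add the deterministic $1/\beta$ bound for the large primes. The only cosmetic difference is that the paper's closing remark bounds the count of prime divisors $q\ge p^\beta$ rather than $q\ge T^\beta$; since $p\le T$ this is a superset and gives the same $1/\beta$ bound, so the two formulations are interchangeable.
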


\begin{proof} Denote by $N(p)$ the number of primes $q$
dividing $p-1$ with $T^{\alpha}\le q<T^\beta$. We have
$$\sum_{p\le T} N(p)=\sum_{T^{\alpha}\le q<T^\beta}\#
\{p\le T~:~p\equiv1\pmod q\}.$$
By Lemma~\ref{BrunTitchmarsh}, for $q<T^\beta$ we have
$$\pi(T,q,1)\le
(1+o(1))\frac{2T}{(1-\beta)q\log T}
$$
as $T\to\infty$. Hence,
$$\sum_{p\le T}N(p)\le(1+o(1))\frac{2T}{(1-\beta)\log T}
\sum_{T^{\alpha}\le q<T^\beta}\frac1q.$$
Next, by the Mertens formula, see~\cite[Equation~(2.15)]{IwKow},
$$\sum_{T^{\alpha}\le q<T^\beta}\frac1q=\log(\beta/\alpha)+o(1)$$
Therefore,
$$\sum_{p\le T}N(p)\le\frac{2T}{(1-\beta)\log T}\(\log(\beta/\alpha)+o(1)\).$$
Denoting
$$\cP=\left\{p\le T~:~N(p)>\frac{2\lambda\log(\beta/\alpha)}{1-\beta}\right\},$$
we get $\#\cP\le (1/\lambda+o(1))T/\log T$. Observing that for
a prime $p\le T$ the number of prime divisors $q\ge p^\beta$
is at most $1/\beta$, we complete the proof.
\end{proof}

\begin{lemma}
\label{lem:squaredivides}
Let $\alpha>0$. Then for $T>0$,
for all but $O\(T^{1-\alpha}\)$
primes $p\le T$ the number $p-1$ has no divisors of the
form $q^2$ with an integer  $q\ge T^\alpha$.
\end{lemma}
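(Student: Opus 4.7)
The plan is a direct first-moment/union-bound argument. Call a prime $p \le T$ \emph{bad} if there exists an integer $q \ge T^\alpha$ with $q^2 \mid p-1$. I would first observe that any such divisor must satisfy $q^2 \le p - 1 < T$, so only integers $q$ in the range $T^\alpha \le q < T^{1/2}$ can create a bad prime. In particular, when $\alpha \ge 1/2$ the statement is essentially trivial: the range of $q$ is empty (or degenerate), and the bound $O(T^{1-\alpha})$ is larger than the number of primes possibly produced. So the substantive case is $\alpha < 1/2$.

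For each fixed integer $q$ in that range, I would bound the number of primes $p \le T$ with $p \equiv 1 \pmod{q^2}$ by the trivial count of multiples of $q^2$ in $[1,T]$, namely $\lfloor (T-1)/q^2 \rfloor \le T/q^2$. Summing this over the relevant $q$ and using the elementary tail estimate $\sum_{q \ge N} q^{-2} \ll 1/N$, the total number of bad primes is at most
$$
\sum_{q \ge T^\alpha} \frac{T}{q^2} \;\ll\; \frac{T}{T^\alpha} \;=\; T^{1-\alpha},
$$
which is the claimed bound, with the implied constant depending only on $\alpha$.

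There is no real obstacle; the only choice to make is whether to invoke Lemma~\ref{BrunTitchmarsh} in place of the trivial count. Brun--Titchmarsh would give an extra factor $1/\log(T/q^2)$, hence a logarithmic improvement, but since the target bound $O(T^{1-\alpha})$ already absorbs all such refinements, the trivial estimate is both sufficient and cleanest. I would therefore present the proof in the short form above.
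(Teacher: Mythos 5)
Your proof is correct and is essentially identical to the paper's: both bound the number of primes $p\le T$ with $p\equiv 1\pmod{q^2}$ trivially by $T/q^2$ and then sum the tail $\sum_{q\ge T^\alpha} T/q^2 \ll T^{1-\alpha}$. The extra remarks about the range of $q$ and the case $\alpha\ge 1/2$ are harmless but not needed.
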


\begin{proof} The number of primes $p\le T$ satisfying $p\equiv1\pmod {q^2}$
does not exceed $T/q^2$. Using the inequality
$$\sum_{q\ge T^\alpha}\frac T{q^2}\le 2T^{1-\alpha},$$
the result follows.
\end{proof}

\subsection{Additive relations in multiplicative groups}

Let $\Gamma \subseteq \C^*$ be a multiplicative group of rank $r$ and
let $a_1,\ldots,a_n\in\Z$. We consider the equation
\begin{equation}
\label{eqn:EvSchSch}
a_1x_1+\ldots+a_nx_n=1,\qquad x_1,\ldots,x_n\in\Gamma.
\end{equation}

Recall that a solution of ~\eqref{eqn:EvSchSch} is {\it non-degenerate\/} provided
no subsum equals zero:
$$
\sum_{j\in \cI} a_jx_j \ne 0 \quad \text{for}\ \cI\subseteq\{1,\ldots,n\}.
$$

We  use the following result of Evertse-Schlickewei-Schmidt~\cite{EvSchSch}.

\begin{lemma}
\label{lem:EvSchSch}
The number of non-degenerate solutions of~\eqref{eqn:EvSchSch} is at most $\exp(c(n)r)$.
\end{lemma}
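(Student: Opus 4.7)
My plan is to deduce this from the Subspace Theorem of W.~M.~Schmidt, in its absolute quantitative form, following the strategy of Evertse--Schlickewei--Schmidt. The key reduction is to pass from the abstract group $\Gamma$ of rank $r$ to a group of $S$-units in a suitable number field, where heights are controlled.

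First I would choose a number field $K$ of finite degree over $\Q$ and a finite set $S$ of places of $K$ (including all archimedean ones) such that $\Gamma \subseteq \cO_{K,S}^{*}$, with $|S|-1$ bounded linearly in $r$ (this is possible because $\Gamma/\mathrm{Tors}(\Gamma)$ is generated by $r$ elements, and one may adjoin the primes appearing in numerators and denominators of these generators). By absorbing the $a_i$ into the variables, equation~\eqref{eqn:EvSchSch} becomes a standard $S$-unit equation $y_1+\ldots+y_n=1$ in an enlarged finitely generated multiplicative group $\Gamma'$ of rank $r' = r + O(1)$.

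Next I would apply the quantitative Subspace Theorem to bound the number of projective points $(y_1:\ldots:y_n) \in \P^{n-1}$ arising from non-degenerate solutions. Each such solution satisfies a product of linear forms being small relative to heights, so after a suitable twist to make the coordinates $S$-integers of bounded height, the Subspace Theorem confines the solutions to a bounded number of proper linear subspaces of $\P^{n-1}$. One then peels off these subspaces by induction on $n$: the intersection of a solution set with a proper subspace defines a shorter equation, and degenerate subsums are excluded by hypothesis. Summing the contributions gives a total bound of the shape $\exp(c(n)r)$.

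The main obstacle is getting the exponent to depend \emph{linearly} in $r$ (rather than, say, exponentially or with a factor $|S|$). A direct application of the Subspace Theorem only controls how many hyperplanes arise, not how many solutions lie in each; the linear dependence requires the uniform quantitative form of the Subspace Theorem over all places in $S$ simultaneously, together with a careful stratification of the space of height inequalities, as carried out in~\cite{EvSchSch}. This sharp counting is what turns a qualitative finiteness statement into the explicit bound $\exp(c(n)r)$.
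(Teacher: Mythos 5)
The paper does not prove Lemma~\ref{lem:EvSchSch} at all: it is quoted verbatim as an external result from Evertse--Schlickewei--Schmidt~\cite{EvSchSch}, and used as a black box. So there is no in-paper argument to compare your sketch against; what you have written is an outline of how the \emph{cited} paper's proof goes.

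As such an outline, your sketch has the right shape, but one step is genuinely wrong as stated and a second is, by your own admission, the whole theorem. First, $\Gamma$ here is a finitely generated subgroup of $\C^{*}$, which may contain transcendental numbers, so it need not embed into $\cO_{K,S}^{*}$ for any number field $K$. You begin by invoking the ``absolute'' quantitative form precisely to sidestep this, but then immediately revert to ``choose a number field $K$\ldots such that $\Gamma\subseteq\cO_{K,S}^{*}$,'' which is not available in general. The actual route in~\cite{EvSchSch} is a specialization argument reducing the $\C$-case to the $\overline{\Q}$-case before heights and places enter; omitting it leaves the reduction broken at the start. Second, your last paragraph concedes that the linear-in-$r$ exponent --- the whole content of the quantitative bound $\exp(c(n)r)$ --- is the hard part and defers it entirely to~\cite{EvSchSch}. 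That is perfectly appropriate if the goal is to \emph{cite} the result (as the paper does), but it means the proposal is not itself a proof. Since the present paper only needs the statement as an input, the correct move is to cite~\cite{EvSchSch} and, if one wants the explicit dependence, note that their Theorem~1.1 gives $\exp((6n)^{3n}(nr+1))$ once one replaces $\Gamma$ with the rank-$nr$ group $\Gamma^{n}\subseteq(\C^{*})^{n}$.
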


Recall that a solution of ~\eqref{eqn:EvSchSch} is non-degenerate provided
no subsum equals zero:
$$
\sum_{j\in \cI} a_jx_j \ne 0 \quad \text{for}\ \cI\subseteq\{1,\ldots,n\}.
$$

\begin{cor}
\label{cor:EvSchSch}
Let $\Gamma\subseteq\C^*$ be as in Lemma~\ref{lem:EvSchSch} and let $\cA\subseteq\C$ be
a finite set of cardinality $\# \cA =m$. For $a_1,\ldots,a_n\in\Z^*$ the number of solutions
of the equation
\begin{equation}
\label{eqn:corEvSchSch}
a_1x_1+\ldots+a_nx_n=0,\qquad x_1,\ldots,x_n\in\Gamma\cap \cA.
\end{equation}
is at most $O\(m^{\fl{n/2}}+\exp(O(r))m^{\fl{(n-1)/2}}\)$, where the implied constants
may depend on $n$.
\end{cor}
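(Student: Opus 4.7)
The plan is to classify each solution of~\eqref{eqn:corEvSchSch} by its finest partition into minimal vanishing sub-relations, count each class using Lemma~\ref{lem:EvSchSch}, and then optimise over the shape of the partition.

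First I would decompose. Given a solution $(x_1,\dots,x_n)$, partition $\{1,\dots,n\}$ into blocks $\cI_1,\dots,\cI_k$ so that each restricted relation $\sum_{j\in\cI_i}a_jx_j=0$ vanishes while no proper sub-sub-sum of any $\cI_i$ vanishes. Such a decomposition exists (merge any refinement of a minimally vanishing block) and there are only boundedly many (depending on $n$) possible set-partitions, so it suffices to fix one shape $(\cI_1,\dots,\cI_k)$ and count. Since $a_j\ne 0$ and $x_j\in\Gamma\subseteq\C^*$, no singleton block can vanish, hence $|\cI_i|\ge 2$ for every $i$.

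Next I would bound the count for one block $\cI_i$ of size $n_i=|\cI_i|$. If $n_i=2$, say $\cI_i=\{j,j'\}$, the equation $a_jx_j+a_{j'}x_{j'}=0$ determines $x_{j'}$ from $x_j$, giving at most $m$ possibilities. If $n_i\ge 3$, pick any index $j_0\in\cI_i$, choose $x_{j_0}\in\cA$ in at most $m$ ways, and divide the relation by $-a_{j_0}x_{j_0}$ to obtain
$$
\sum_{j\in\cI_i\setminus\{j_0\}} \frac{a_j}{-a_{j_0}}\cdot\frac{x_j}{x_{j_0}}=1,
$$
where the new unknowns $x_j/x_{j_0}$ still lie in $\Gamma$ (a group of rank $r$). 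By minimality of the block no proper subsum vanishes, so Lemma~\ref{lem:EvSchSch} bounds the number of tuples of ratios by $\exp(O(r))$; multiplying by the $m$ choices of $x_{j_0}$ and using that each ratio together with $x_{j_0}$ determines $x_j$ (independently of whether $x_j\in\cA$ is enforced) yields at most $m\exp(O(r))$ solutions for this block.

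Multiplying across the $k$ blocks, and letting $k'$ be the number of blocks of size $\ge 3$, the contribution of a fixed partition is $O\bigl(m^{k}\exp(O(r))^{k'}\bigr)$. It remains to maximise this expression. If all blocks have size $2$ then $k'=0$, $n$ is even, $k=n/2$, and we get $m^{n/2}=m^{\lfloor n/2\rfloor}$. Otherwise $k'\ge 1$ and $n=\sum n_i\ge 2(k-k')+3k'=2k+k'$, whence $k\le(n-k')/2\le(n-1)/2$, so the contribution is at most $m^{\lfloor(n-1)/2\rfloor}\exp(O(r))$. Summing over the $O(1)$ possible partition shapes finishes the proof.

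The only real subtlety is making sure that the ratios $x_j/x_{j_0}$ really do sit in the same group of rank $r$ (they do, because $\Gamma$ is a multiplicative group), and that the non-degeneracy hypothesis of Lemma~\ref{lem:EvSchSch} is supplied by the minimality of the block; the rest is bookkeeping on the possible block-size profiles $(n_1,\dots,n_k)$.
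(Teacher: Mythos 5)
Your proof is correct and takes a genuinely different route from the paper's. The paper argues by induction on $n$: it fixes $x_n=b$, normalizes so the right-hand side is $1$, classifies solutions by a largest vanishing subset $\cI$, and combines the Evertse--Schlickewei--Schmidt count for the non-degenerate remainder with the inductive bound on the vanishing part, treating $\#\cI = n-2$ as a separate case. You instead decompose each solution once and for all into a partition of $\{1,\dots,n\}$ into minimal vanishing blocks, count the blocks independently, and optimize over the block-size profile; the inequality $n \ge 2k + k'$ then produces the two competing terms $m^{\fl{n/2}}$ and $\exp(O(r))m^{\fl{(n-1)/2}}$ directly. This is cleaner than the paper's version: the recursion disappears and the trade-off between powers of $m$ and the $\exp(O(r))$ factor is completely transparent. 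The one point to tighten is that after dividing by $-a_{j_0}x_{j_0}$ your coefficients $a_j/(-a_{j_0})$ are rational, whereas Lemma~\ref{lem:EvSchSch} as stated requires integers; the paper meets the same difficulty and resolves it by keeping the integer coefficients and absorbing $-a_{j_0}$ into the unknowns, i.e.\ setting $z_j = x_j/(-a_{j_0}x_{j_0}) \in \langle \Gamma, a_{j_0}\rangle$, a group of rank at most $r+1$. Non-degeneracy is still exactly the minimality of the block, and $\exp\bigl(c(n_i-1)(r+1)\bigr)$ is still $\exp(O(r))$ since the implied constants may depend on $n$. With that adjustment your argument is complete.
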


\begin{proof} In what follows, the implied constants may depend on $n$.

We prove the statement by induction on $n$. For $n=1,2$ the statement is trivial. We assume that for some $k\ge 3$ the statement holds for all  $n<k$, and we now prove it for $n=k$.
We can fix $x_n=b\in \Gamma\cap \cA$ such that the number $J$ of solutions of~\eqref{eqn:corEvSchSch} is not greater than $m$ times the number of solutions of the equation
\begin{equation}
\label{eqn:corEvSchSchGamma1A1}
a_1y_1+\ldots+a_{n-1}y_{n-1}=1,\qquad y_1,\ldots,y_{n-1}\in\Gamma_0\cap \cA_0,
\end{equation}
where $\Gamma_0=\langle G\cup \{-a_nb\}\rangle \subseteq\C^*$ (the group generated by $G\cup \{-a_nb\}$) and
$$
\cA_0=\{x(-a_nb)^{-1}:x\in \cA\}.
$$
To each solution of~\eqref{eqn:corEvSchSchGamma1A1} we attach a subset (possibly empty) $I\subseteq\{1,\ldots,n-1\}$ with the largest cardinality such that
$$
\sum_{j\in I}a_jy_j=0.
$$
If for a given solution there are several such subsets, then for this solution we attain one of these subsets.

Given a subset $\cI\subseteq \{1,\ldots,n-1\}$ (including an empty subset) we collect together those solutions of~\eqref{eqn:corEvSchSchGamma1A1} for which $\cI$ has been attained.
There exists a fixed $\cI\subseteq\{1,\ldots,n-1\}$ such that
\begin{equation}
\label{eqn:corEvSchSchJlessmJ1}
J\ll m J_0,
\end{equation}
where $J_0$ is the number of solutions of~\eqref{eqn:corEvSchSchGamma1A1} corresponding to the set $\cI$. We have
\begin{equation}
\label{eqn:corEvSchSchSubsetI}
\sum_{j\in I}a_jy_j=0,\quad y_j\in \Gamma_0\cap \cA_0
\end{equation}
and
\begin{equation}
\label{eqn:corEvSchSchNotSubsetI}
\sum_{j\in\{1,\ldots, n-1\}\setminus I}a_jy_j=1,\qquad y_j\in \Gamma_0.
\end{equation}
In particular,  $\cI$ is a proper subset of $\{1,\ldots,n-1\}$. Observe that
by the maximality of $\cI$ the solutions considered
in~\eqref{eqn:corEvSchSchNotSubsetI} are non-degenerate.  If $\# \cI=n-2$,
then~\eqref{eqn:corEvSchSchNotSubsetI} has at most one solution,
so that the quantity $J_0$ is bounded by the number of solutions of~\eqref{eqn:corEvSchSchSubsetI}.
Hence, by the induction hypothesis and by~\eqref{eqn:corEvSchSchJlessmJ1} we have
\begin{equation*}
\begin{split}
J\ll m (m^{\fl{(n-2)/2}}+&\exp(O(r))m^{\fl{(n-3)/2}})\\  &\ll m^{\fl{n/2}}+\exp(O(r)) m^{\fl{(n-1)/2}},
\end{split}
\end{equation*}
and the result follows.

Let now $\# \cI\le n-3$. By Corollary~\ref{cor:EvSchSch}, the number of non-degenerate
solutions of~\eqref{eqn:corEvSchSchNotSubsetI} is bounded by $\exp(O(r))$.
Furthermore, since $\# \cI\le n-3$, by the induction hypothesis the number of solutions
of~\eqref{eqn:corEvSchSchSubsetI} is
$O\( m^{\fl{(n-3)/2}}+\exp(O(r))m^{\fl{(n-4)/2}}\)$.
Therefore, by~\eqref{eqn:corEvSchSchJlessmJ1} we have
\begin{equation*}
\begin{split}
J<\exp(c(n)r) m^{\fl{(n-1)/2}}+&\exp(c(n)r)m^{\fl{(n-2)/2}}\\ = &\frac{\exp(c(n)r)}{m}m^{\fl{n/2}} +
\exp(c(n)r) m^{\fl{(n-1)/2}},
\end{split}
\end{equation*}
for some constant $c(n)$ that  depends only on $n$.

If $\exp(c(n)r)\ge m$, then the trivial estimate $J\le m^n$ implies
$$
J\le\exp(nc(n)r)
$$
and are done in this case. If $\exp(c(n)r)<m$, then
$$
J<m^{\fl{n/2}} +\exp(c(n)r) m^{\fl{(n-1)/2}}
$$
and the result follows.
\end{proof}

\begin{lemma}
\label{lem:MultIndSubset beta+j}
Let $\beta\in \C$ and $m\in \Z_{+}$. Consider a set
$$
\cA\subseteq\{\beta+j: \, 1\le j\le m\}\subseteq\C
$$
with $\# \cA>m^{\tau}$. Then there is a multiplicatively independent subset $\cA_0\subseteq \cA$ of size
$$
\# \cA_0>c(\tau)\log m,
$$
where $c(\tau)>0$ depends only on $\tau$.
\end{lemma}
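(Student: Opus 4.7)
The plan is to bound from below the rank $k$ of the multiplicative group $\Gamma = \langle \cA \setminus \{0\} \rangle \subseteq \C^*$; since at most one element of $\cA$ can vanish, we may assume $0 \notin \cA$ and that $M = \# \cA > m^\tau$, and the maximality of any multiplicatively independent subset $\cA_0 \subseteq \cA$ of size $k$ gives $\alpha^N \in \langle \cA_0 \rangle$ for some $N \ge 1$ for every $\alpha \in \cA$, whence $\rank \Gamma = k$. Thus showing $k > c(\tau) \log m$ will yield the lemma.

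The core step is to count solutions to the homogeneous equation
\begin{equation*}
\alpha_1 + \ldots + \alpha_n - \alpha_{n+1} - \ldots - \alpha_{2n} = 0, \qquad \alpha_1, \ldots, \alpha_{2n} \in \cA,
\end{equation*}
in two different ways, for a positive integer $n$ to be chosen in terms of $\tau$. Writing $\alpha_i = \beta + j_i$, this equation is equivalent to the integer identity $j_1 + \ldots + j_n = j_{n+1} + \ldots + j_{2n}$ with $j_i \in \cJ := \{j \in [1,m] : \beta+j \in \cA\}$ and $\# \cJ = M$, so the complex parameter $\beta$ drops out entirely. Since $j_1 + \ldots + j_n$ takes at most $nm$ integer values, the Cauchy--Schwarz inequality produces at least $M^{2n}/(nm)$ solutions; on the other hand, since $\cA \subseteq \Gamma$ and $\rank \Gamma = k$, Corollary~\ref{cor:EvSchSch} bounds the same count from above by $O\(M^n + \exp(O(k)) M^{n-1}\)$.

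Combining these two estimates gives $M^{n+1} \ll n m M + n m \exp(O(k))$. Choosing $n = \lceil 1/\tau \rceil + 1$ so that $n\tau > 1$, one has $M^{n+1} \ge m^{(n+1)\tau}$ while $n m M \ll m^{1+\tau}$, so the term $nmM$ is negligible for $m$ sufficiently large in terms of $\tau$, forcing $\exp(O(k)) \gg m^{(n+1)\tau - 1}$ and hence $k \gg_\tau \log m$ as required. For the finitely many small values of $m$, the conclusion $\# \cA_0 \ge 1 > c(\tau) \log m$ holds trivially after shrinking $c(\tau)$. The main point requiring care is the verification that $\rank \Gamma = k$ and that Corollary~\ref{cor:EvSchSch} applies with this rank parameter, but this is a standard consequence of the definition of torsion-free rank together with maximality of $\cA_0$.
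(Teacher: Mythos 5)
Your argument is essentially the same as the paper's: form $\Gamma=\langle\cA\rangle$, observe that a maximal multiplicatively independent subset $\cA_0$ has $\rank\Gamma\le\#\cA_0$, count solutions of $x_1+\ldots+x_n-x_{n+1}-\ldots-x_{2n}=0$ in $\cA$ from below via Cauchy--Schwarz ($\ge(\#\cA)^{2n}/(nm)$) and from above via Corollary~\ref{cor:EvSchSch}, and conclude $r\gg_\tau\log m$. The only differences are cosmetic: the paper absorbs the $M^n$ term into $\exp(C(n)r)M^n$ and picks $n=\lceil 2/\tau\rceil$, while you keep the two terms separate and pick $n=\lceil 1/\tau\rceil+1$. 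One small slip in your writeup: the claim ``$nmM\ll m^{1+\tau}$'' is not correct as stated (you only have $M>m^\tau$, with $M\le m$ possibly much larger), but the argument goes through anyway because what is actually needed is $nmM=o(M^{n+1})$, i.e.\ $nm=o(M^n)$, which follows from $M^n>m^{n\tau}$ and $n\tau>1$; so the intended absorption is valid even though the stated inequality is backwards.
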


\begin{proof}
Denote $\Gamma=\langle \cA\rangle\subseteq \C^*$ the multiplicative group generated by $\cA$. If
$\cA_0\subseteq \cA$ is a maximal
multiplicatively independent subset, clearly for each $x\in \cA$ we have $x^k\in \langle \cA_0 \rangle$ for some
positive integer  $k$.
Hence
$$
\text{rank}\, \Gamma=r\le \# \cA_0.
$$
We note that for an integer $n \ge 1$ the sums $z = x_1+ \ldots+ x_{n}$, $x_1, \ldots, x_{n} \in \cA$
take values
in a set $\cZ_n$ of cardinality $\#\cZ_n \le n m$. Let $N_n(z)$ be the number of such representations.
Then,
\begin{equation*}
\begin{split}
\# \{x_1&+ \ldots+x_n-x_{n+1}-\ldots-x_{2n} = 0,\ x_1, \ldots, x_{2n} \in \cA\} \\
 = &\sum_{z \in \cZ_n} N_n(z)^2 \ge \frac{1}{\#\cZ_n} \(\sum_{z \in \cZ_n} N_n(z)\)^2
=  \frac{1}{\#\cZ_n} \(\# \cA\)^{2n}
 \ge \frac{\(\# \cA\)^{2n}}{nm}.
\end{split}
\end{equation*}
Applying Corollary~\ref{cor:EvSchSch}, we get that
$$
\frac{\(\# \cA\)^{2n}}{m}<\exp(C(n)r)\(\# \cA\)^n.
$$
for some constant $C(n)$ that depends only on $n$.
Since $\# \cA>m^{\tau}$, we derive
$$
m^{\tau n-1}<\exp(C(n)r).
$$
Taking $n = \rf{2\tau^{-1}}$, it follows that $r>c(\tau)\log m$ for some positive constant $c(\tau)$ that depends only on $\tau$.
\end{proof}

\section{Multiplicative and Polynomial Congruences and Equations}

\subsection{More general congruences}

To estimate $K_{\nu}(p,h,s)$ we sometimes have to study a more
general congruence. For a prime $p$, an integer $\nu \ge 1$,
and  vectors
\begin{equation*}
\begin{split}\vec{h} & = (h_1, \ldots, h_{2\nu})\in \N^{2\nu}, \\
\vec{s} &= (s_1, \ldots, s_{2\nu}) \in \F_p^{2\nu}, \\
\vec{e} & = (e_1, \ldots, e_{2\nu}) \in \{-1,1\}^{2\nu},
\end{split}
\end{equation*}
we denote by
$K_{\nu}(p,\vec{e}, \vec{h},\vec{s})$ the number of solutions of the congruence
\begin{equation*}
\begin{split}
(x_1+s_1)^{e_1}\ldots & (x_{2\nu}+s_{2\nu})^{e_{2\nu}}\equiv  1 \pmod p,\\
1\le x_j \le & h_j,  \qquad j =1, \ldots, 2\nu.
\end{split}
\end{equation*}

The following result from~\cite{BGKS2} relates $K_{\nu}(p,\vec{e}, \vec{h},\vec{s})$ and
$K_{\nu}(p,h,s_j)$, $j =1, \ldots, \nu$. (The proof is almost instant if one expresses
 $K_{\nu}(p,h,\vec{s})$ via multiplicative character sums and uses the H{\"o}lder inequality.)

\begin{lemma}
\label{lem:Kss} We have
$$
K_{\nu}(p,\vec{e}, \vec{h},\vec{s})\le \prod_{j=1}^{2\nu}  K_{\nu}(p,h_j,s_j)^{1/2\nu}.
$$
\end{lemma}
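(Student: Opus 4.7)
The plan is to follow exactly the hint in the parenthetical remark: detect the congruence $\prod_j (x_j+s_j)^{e_j}\equiv 1\pmod p$ via multiplicative characters, factor, and then apply H\"older.

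First, using orthogonality of multiplicative characters modulo $p$ (and the convention $\chi(0)=0$, which automatically enforces the non-degeneracy $x_j+s_j\not\equiv 0$),
\begin{equation*}
K_\nu(p,\vec e,\vec h,\vec s)=\frac{1}{p-1}\sum_{\chi\bmod p}\prod_{j=1}^{2\nu} T_j(\chi),\qquad T_j(\chi)=\sum_{x=1}^{h_j}\chi^{e_j}(x+s_j).
\end{equation*}
Taking absolute values and applying H\"older's inequality to the $\chi$-sum with $2\nu$ factors yields
\begin{equation*}
K_\nu(p,\vec e,\vec h,\vec s)\le \frac{1}{p-1}\prod_{j=1}^{2\nu}\Bigl(\sum_{\chi\bmod p}|T_j(\chi)|^{2\nu}\Bigr)^{1/(2\nu)}.
\end{equation*}

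Next, for each fixed $j$ I would expand $|T_j(\chi)|^{2\nu}$ as a $2\nu$-fold sum over variables $x_{j,1},\dots,x_{j,\nu},y_{j,1},\dots,y_{j,\nu}\in[1,h_j]$, interchange the order of summation, and apply orthogonality once more. Since $e_j=\pm 1$, the map $\chi\mapsto\chi^{e_j}$ is a bijection on characters, and the inner sum $\sum_\chi\chi^{e_j}\!\bigl(\prod_k(x_{j,k}+s_j)/\prod_k(y_{j,k}+s_j)\bigr)$ is $(p-1)$ exactly when the ratio equals $1$ in $\F_p^*$, i.e.\ when $(x_{j,\cdot},y_{j,\cdot})$ is counted by $K_\nu(p,h_j,s_j)$. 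Hence
\begin{equation*}
\sum_{\chi\bmod p}|T_j(\chi)|^{2\nu}=(p-1)K_\nu(p,h_j,s_j).
\end{equation*}
Substituting this back gives the claimed bound, because the factors of $(p-1)^{1/(2\nu)}$ multiply to $(p-1)$ and cancel the $1/(p-1)$ out front.

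There is no real obstacle here: the only point requiring a brief check is that the non-vanishing condition $\prod_j(x_j+s_j)\not\equiv 0$ implicit in the definition of $K_\nu(p,h_j,s_j)$ is handled automatically by $\chi(0)=0$ in the character sum expansion, so the second application of orthogonality counts precisely the desired solutions.
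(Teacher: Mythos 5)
Your proof is correct and follows precisely the route the paper sketches in its parenthetical remark: express the counting function via orthogonality of multiplicative characters, apply H\"older with exponent $2\nu$ to the $\chi$-sum, and re-expand each resulting moment $\sum_\chi|T_j(\chi)|^{2\nu}$ by orthogonality to recover $(p-1)K_\nu(p,h_j,s_j)$. The observations that $\chi\mapsto\chi^{e_j}$ is a bijection and that $\chi(0)=0$ handles the non-vanishing constraints are exactly the small checks needed, so there is nothing missing.
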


\subsection{Multiplicative congruences and polynomial congruences with
small coefficients}

First we derive a certain condition on $s$ for which the congruence~\eqref{eq:cong x,y}
has many solutions with
\begin{equation}
\label{eq:x neq y} \{x_1,\ldots, x_{\nu}\}\cap \{y_1,\ldots,
y_{\nu}\}=\emptyset,
\end{equation}
In fact this has already appeared as a part of the argument in~\cite{BGKS2},
however here we present it in a self-contained form and in a much large generality
that we need here:

\begin{lemma}
\label{lem:CommonSols1} Let $h< 0.5 p^{1/\nu}$. Assume that for some integer $s$ there are at least
$N$ solutions to~\eqref{eq:cong x,y}
that satisfy the condition~\eqref{eq:x neq y}. Then there are
$Nh^{-1}\exp\(O(\log h/\log\log h)\)$ distinct
non-constant polynomials
$$
R(Z)=A_1Z^{\nu-1}+\ldots+A_{\nu-1}Z+A_{\nu} \in \Z[Z],
$$
with
$$
|A_i| \le \binom\nu{i} h^{i}, \qquad i =1, \ldots, \nu,
$$
and such that $R(s)\equiv 0\pmod p$.
 \end{lemma}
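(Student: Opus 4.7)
The plan is to attach to every solution the polynomial
$$
R(Z)=\prod_{i=1}^{\nu}(Z+x_i)-\prod_{i=1}^{\nu}(Z+y_i)\in\Z[Z],
$$
and then control the fibres of this map. Expansion through elementary symmetric polynomials yields
$$
R(Z)=\sum_{i=1}^{\nu}\bigl(e_i(x)-e_i(y)\bigr)Z^{\nu-i},
$$
and since each $e_i$ is a sum of $\binom{\nu}{i}$ monomials bounded by $h^i$, the coefficients $A_i=e_i(x)-e_i(y)$ obey the required bound $|A_i|\le\binom{\nu}{i}h^i$ (up to a harmless factor of $2$ that can be absorbed into the universal constant). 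Substituting $Z=s$ and invoking~\eqref{eq:cong x,y} gives $R(s)\equiv 0\pmod p$.

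Next I would verify that $R$ is nonconstant. If $\deg R=0$ then $R=A_{\nu}$ is an integer of absolute value at most $h^{\nu}<p$, using the hypothesis $h<0.5\,p^{1/\nu}$, so $R(s)\equiv 0\pmod p$ forces $R=0$; then $\prod(Z+x_i)=\prod(Z+y_i)$, hence the multisets $\{x_1,\ldots,x_{\nu}\}$ and $\{y_1,\ldots,y_{\nu}\}$ coincide, contradicting~\eqref{eq:x neq y}.

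The core step is the bound: each nonconstant $R$ with the listed coefficient constraints is the image of at most $h\exp\bigl(O(\log h/\log\log h)\bigr)$ ordered solutions. Fix such an $R$ and, for every $a\in[1,h]$, consider the solutions with $a\in\{x_1,\ldots,x_{\nu}\}$. Evaluating the defining identity at $Z=-a$ gives
$$
R(-a)=-\prod_{i=1}^{\nu}(y_i-a),
$$
and disjointness~\eqref{eq:x neq y} forbids any factor from vanishing, so $R(-a)$ is a fixed nonzero integer of magnitude at most $2^{\nu}h^{\nu}$. The number of multisets $\{y_1,\ldots,y_{\nu}\}\subseteq[1,h]$ realising this product is controlled by the number of signed $\nu$-fold factorizations of $|R(-a)|$, and by iteration of the divisor estimate (Lemma~\ref{lem:Div ANF} with $d=1$) this quantity is $\exp\bigl(O(\log h/\log\log h)\bigr)$. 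Given $\{y_i\}$, the polynomial $\prod(Z+y_i)+R$ is determined and so is the multiset $\{x_i\}$, whenever this polynomial factors as required. Each multiset pair $(\{x_i\},\{y_i\})$ is counted at least once as $a$ ranges over $[1,h]$, so the number of multiset pairs attached to $R$ is at most $h\exp\bigl(O(\log h/\log\log h)\bigr)$; orderings contribute only the universal constant $(\nu!)^2$, and dividing $N$ by this bound produces the claim.

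The main obstacle is the single observation that disjointness is precisely the condition which prevents $R(-a)$ from vanishing; once this is in place, all remaining work is coefficient bookkeeping together with the familiar bound $\tau(n)=\exp(O(\log n/\log\log n))$.
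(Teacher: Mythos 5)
Your proof is correct and follows essentially the same route as the paper: the same polynomial $R=P_{\vec x}-P_{\vec y}$, the same observation that disjointness together with $h<0.5p^{1/\nu}$ forces $R$ to be non-constant, and the same use of the divisor bound applied to $R(-a)=-\prod_{i}(y_i-a)$ to control the fibres, with the only difference being bookkeeping (the paper fixes a popular value $x_1^*$ by pigeonhole before counting, while you sum over $a\in[1,h]$ afterwards). Also, you can drop the worry about a ``factor of $2$'' in the coefficient bound: $e_i(x)$ and $e_i(y)$ are each non-negative and at most $\binom{\nu}{i}h^i$, so $|A_i|=|e_i(x)-e_i(y)|\le\binom{\nu}{i}h^i$ holds directly.
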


\begin{proof}We associate with any solution
$$
\vec{x} = (x_1,\ldots, x_{\nu}) \mand
\vec{y} = (y_1,\ldots, y_{\nu})
$$
of~\eqref{eq:cong x,y} that also satisfy~\eqref{eq:x neq y},
the polynomials
$$P_\vec{x}(Z)=(x_1+Z)\ldots(x_{\nu}+Z),
$$
and then set
\begin{equation}
\label{eq:Poly R} R_{\vec{x},\vec{y}}(Z)= P_\vec{x}(Z)-P_\vec{y}(Z).
\end{equation}
In particular,
since $R_{\vec{x},\vec{y}}(s)\equiv 0\pmod p$ and $h< 0.5 p^{1/\nu}$ it follows that
$R_{\vec{x},\vec{y}}(Z)$ is not a constant polynomial (for otherwise it is identical
to zero which is impossible by~\eqref{eq:x neq y}).
Clearly each polynomial  $R_{\vec{x},\vec{y}}(Z)$ satisfies all the
required condition.

By the Dirichlet pigeon-hole principle we have
at least $N/h$ solutions with the same $x_1=x_1^*$. We claim that
any polynomial $R$ induced by these solutions occurs at most
$\exp\(c_0(\nu) \log h/\log\log h\)$ times  for some
$c_0(\nu)$ depending only on $\nu$. Indeed, fix $R$ and
assume that $R = R_{\vec{x},\vec{y}}$.
Let $M=R(-x_1^*)$ and $z_i=-x_1^*+y_i$, $i=1,\ldots,\nu$.
We have
\begin{equation}
\label{eq: repr M} M=-P_\vec{x}(-x_1^*) -P_\vec{y}(-x_1^*)=
-P_\vec{y}(-x_1^*) =-z_1\ldots z_\nu.
\end{equation}
Using the well-known bound on the divisor function
(a special case of  Lemma~\ref{lem:Div ANF} below), we see that the number
of solutions to~\eqref{eq: repr M} is bounded by $\exp\(c_0(\nu)\log h/\log\log h\)$. Each solution determines the numbers
$y_1,\ldots,y_\nu$ and the polynomial $P_\vec{x}$, and for each $P$ there
are at most $(\nu-1)!$ solutions of~\eqref{eq:cong x,y} with $P = P_\vec{x}$. This
concludes the proof.
\end{proof}

\subsection{Common solutions to many  congruences}

We recall the following result from~\cite{BGKS2}:

\begin{lemma}
\label{lem:LinearCongr} Let   $\gamma \in  (0,1)$ and let $I$ and
$J$ be two intervals containing $h$ and $H$ consecutive integers,
respectively, and  such that
$$
h\le H<\frac{\gamma \,p}{15}.
$$
Assume that for some integer $s$ the congruence
$$
y\equiv sx\pmod p
$$
has at least $\gamma h+1$ solutions in $x\in I$, $y\in J$.
Then there exist integers $a$ and $b$ with
$$
|a| \le \frac{H}{\gamma\,h},
\qquad 0<b\le \frac{1}{\gamma},
$$
such that
$$
s\equiv a/b \pmod p.
$$
\end{lemma}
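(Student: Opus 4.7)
The plan is to translate the question into the two-dimensional lattice
\[
\Gamma=\{(u,v)\in\Z^2:v\equiv su\pmod p\},\qquad \det\Gamma=p,
\]
and exhibit $(b,a)$ as the coordinates of a suitable lattice vector. Each of the $N\ge\gamma h+1$ solutions of the congruence is a point of $\Gamma$ inside $I\times J$, and subtracting any fixed solution produces $N$ distinct lattice points of $\Gamma$ in the symmetric box
\[
D=\bigl\{(u,v)\in\R^2:|u|\le h-1,\ |v|\le H-1\bigr\}.
\]
The desired $(b,a)$ is required to lie in the smaller symmetric body $D^{*}=\{|u|\le1/\gamma,\ |v|\le H/(\gamma h)\}=\tfrac{1}{\gamma h}D$.

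First I will order the solutions so that $x_1<\dots<x_N$ and consider the consecutive-difference vectors $\vec d_i=(x_{i+1}-x_i,y_{i+1}-y_i)\in\Gamma\cap D$, each with a strictly positive first coordinate. The key step is to show that all of $\Gamma\cap D$, hence all the $\vec d_i$, lie on a single line through the origin. Two linearly independent vectors of $\Gamma\cap D$ would have determinant at most $2(h-1)(H-1)<2hH$ in absolute value while being a non-zero integer multiple of $\det\Gamma=p$; the hypothesis $H<\gamma p/15$ then forces $\gamma h>15/2$, so for $\gamma h\le 15/2$ collinearity is automatic. For the complementary range I plan to combine Lemma~\ref{lem:latpoints} applied to $D$ with Minkowski's second theorem $\lambda_1\lambda_2\ge p/(2hH)>15/(2\gamma h)$; the factor $15$ is chosen precisely so that the resulting upper bound on $\#(\Gamma\cap D)$ in any genuinely two-dimensional configuration is strictly below $\gamma h+1$, again forcing collinearity.

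Granting collinearity, every $\vec d_i=n_i\vec v$ for a fixed primitive vector $\vec v=(B,A)\in\Gamma$ with $B>0$ and positive integers $n_i$, and telescoping gives
\[
B\sum_{i=1}^{N-1}n_i=x_N-x_1\le h-1,\qquad |A|\sum_{i=1}^{N-1}n_i=|y_N-y_1|\le H-1,
\]
the second identity using that the increments $n_iA$ share the sign of $A$ (so $(y_i)$ is monotone). Since $\sum n_i\ge N-1\ge\gamma h$, this yields $B\le(h-1)/(\gamma h)<1/\gamma$ and $|A|\le(H-1)/(\gamma h)<H/(\gamma h)$. Setting $(b,a)=(B,A)$ (or $(-B,-A)$ to arrange $b>0$) and noting that $(B,A)\in\Gamma$ is the relation $a\equiv sb\pmod p$ completes the argument.

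The main obstacle will be the rigorous verification of collinearity of $\Gamma\cap D$ when $\gamma h>15/2$: the determinant bound on a pair of linearly independent lattice vectors is no longer immediately contradictory, and one has to combine the successive-minima count of Lemma~\ref{lem:latpoints} with Minkowski's theorem in a sharp enough way to eliminate the two-dimensional configuration. The cushion provided by the constant $15$ in the hypothesis $H<\gamma p/15$ is exactly what makes this case analysis close.
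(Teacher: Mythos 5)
Your strategy — pass to the lattice $\Gamma=\{(u,v)\in\Z^2:v\equiv su\pmod p\}$, prove that the relevant lattice vectors are collinear, then telescope along consecutive differences — is the right one, and the telescoping step is correct. The gap is exactly where you flag it: the collinearity claim for $\gamma h>15/2$ is not recoverable from Lemma~\ref{lem:latpoints} together with Minkowski's second theorem. When $\lambda_2=\lambda_2(D,\Gamma)\le 1$ one does get $\lambda_1\lambda_2>15/(2\gamma h)$, but the bound of Lemma~\ref{lem:latpoints} expands as
$$
\#(D\cap\Gamma)\le\Bigl(\tfrac{2}{\lambda_1}+1\Bigr)\Bigl(\tfrac{4}{\lambda_2}+1\Bigr)
=\frac{8}{\lambda_1\lambda_2}+\frac{2}{\lambda_1}+\frac{4}{\lambda_2}+1,
$$
and already the leading term only satisfies $8/(\lambda_1\lambda_2)<16\gamma h/15$, which exceeds $\gamma h$. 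Optimizing over all admissible $(\lambda_1,\lambda_2)$ with $\lambda_1\le\lambda_2\le1$ and $\lambda_1\lambda_2>15/(2\gamma h)$, the right-hand side can be as large as roughly $\tfrac{4}{3}\gamma h+5$, so it is never $<\gamma h+1$: the contradiction you are aiming for simply does not materialize, and the assertion that the constant $15$ was ``chosen precisely'' for this purpose is false. The structural reason is that $D$ has area $4(h-1)(H-1)$ — four times the box actually occupied by the solutions — and Lemma~\ref{lem:latpoints} carries a further multiplicative loss on top of that.

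The fix is to count the $N$ solutions themselves, which are precisely the $\Gamma$-points in their bounding box of sides $\le h-1$ and $\le H-1$ (area $<hH$). If they are not all collinear, Pick's theorem for the lattice $\Gamma$ of determinant $p$, applied to their convex hull (which contains all $N$ of them in its closure), gives
$$
\bigl(N/2-1\bigr)\,p\le \operatorname{Area}\le (h-1)(H-1)<hH<\frac{\gamma h\,p}{15},
\qquad\text{hence}\qquad N<\frac{2\gamma h}{15}+2.
$$
Combined with $N\ge\gamma h+1$ this forces $\gamma h<15/13$, whence $N<28/13<3$, i.e.\ $N\le2$ — but two points are trivially collinear. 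So collinearity holds in all cases, with no split, and your telescoping then yields $b,a$ with $0<b\le(h-1)/(\gamma h)<1/\gamma$ and $|a|\le(H-1)/(\gamma h)<H/(\gamma h)$; since $b\le h-1<p$ the relation $a\equiv sb\pmod p$ gives $s\equiv a/b\pmod p$. (Note that your determinant bound, and equally the fact that three non-collinear $\Gamma$-points span area $\ge p/2$, handle only $\gamma h\le 15/2$; using all $N$ points via Pick is what closes the complementary range that the Lemma~\ref{lem:latpoints} route cannot.)
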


We also need the following estimate:

\begin{lemma}
\label{lem:CommonSols2} There is an absolute constant $C_0>0$ such that
if for some $s$
there are
$$
T\ge C_0 \max\{h, h^4p^{-1/2}, h^6 p^{-1}\}
$$
different triples $(U,V,W)$ with
$$
|U|\le3h,\qquad |V|\le3h^2,\qquad  |W|\le h^3,
$$
such that
$$
Us^2+Vs+W\equiv 0 \pmod p,
$$
then there are integers $r$ and $t$ satisfying conditions
$$0<|r|\ll h^{3/2}T^{-1/2}, \qquad |t|\ll h^{5/2}T^{-1/2},
\qquad rs\equiv t\pmod p.
$$
\end{lemma}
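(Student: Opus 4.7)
The plan is to apply geometry of numbers to the rank-three lattice
$$
\Gamma=\{(U,V,W)\in\Z^3 : Us^2+Vs+W\equiv 0\pmod p\}
$$
(of determinant $p$, since the residue map $(U,V,W)\mapsto Us^2+Vs+W\bmod p$ is surjective onto $\Z/p\Z$) and the symmetric convex body
$$
B=\{(U,V,W)\in\R^3 : |U|\le 3h,\,|V|\le 3h^2,\,|W|\le h^3\}
$$
of volume $72h^6$. The hypothesis reads $\#(\Gamma\cap B)\ge T$. Let $\lambda_1\le\lambda_2\le\lambda_3$ denote the successive minima of $B$ with respect to $\Gamma$; Lemma~\ref{lem:latpoints} and Minkowski's second theorem together give
$$
T\le\prod_{i=1}^3\Bigl(\frac{2i}{\lambda_i}+1\Bigr)\mand \lambda_1\lambda_2\lambda_3\ge\frac{p}{54h^6}.
$$

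First I would use the hypothesis $T\ge C_0h^6/p$, with $C_0$ large, to rule out $\lambda_3<1$: otherwise the count simplifies to $T\ll 1/(\lambda_1\lambda_2\lambda_3)\ll h^6/p$, a contradiction. Next I would split on $\lambda_2$. If $\lambda_2\ge 1$, the count reduces to $\lambda_1\ll 1/T$; the hypothesis $T\ge C_0h$ then forces $|U_1|\le 3h\lambda_1<1$, so the integer $U_1$ vanishes, and the shortest vector $\vec v_1=(0,V_1,W_1)$ directly provides $(r,t)=(V_1,-W_1)$ with $rs\equiv t\pmod p$; the bounds $|V_1|\ll h^2/T$ and $|W_1|\ll h^3/T$ meet the targets $h^{3/2}T^{-1/2}$ and $h^{5/2}T^{-1/2}$ because $T\ge h$. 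If instead $\lambda_2<1$, the count gives $\lambda_1\lambda_2\ll 1/T$, and I would pick linearly independent short vectors $\vec v_i=(U_i,V_i,W_i)\in\lambda_i B$ for $i=1,2$ and form the combination
$$
\vec w=U_2\vec v_1-U_1\vec v_2=\bigl(0,\,U_2V_1-U_1V_2,\,-(U_2W_1-U_1W_2)\bigr)\in\Gamma,
$$
which is nonzero by linear independence (except in the degenerate subcase where both $U_i=0$, when $\vec v_1$ itself serves). Setting $r=U_2V_1-U_1V_2$ and $t=U_1W_2-U_2W_1$ produces $rs\equiv t\pmod p$ with $|r|\le 18h^3\lambda_1\lambda_2$ and $|t|\le 6h^4\lambda_1\lambda_2$.

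The main obstacle is matching the sharper target in the case $\lambda_2<1$: the naive substitution $\lambda_1\lambda_2\ll 1/T$ yields only $|r|\ll h^3/T$, matching the target $h^{3/2}T^{-1/2}$ only when $T\ge h^3$. To close the gap I would sharpen the analysis using the auxiliary hypothesis $T\ge C_0h^4p^{-1/2}$: the Minkowski lower bound on $\lambda_1\lambda_2\lambda_3$ together with the upper bound $\lambda_3\le p/(3h)$ coming from the three lattice vectors $(p,0,0),(0,p,0),(0,0,p)\in\Gamma$ yields a two-sided estimate of the form $1/(18h^5)\le\lambda_1\lambda_2\le 105/T$, which confines this case to the window $T\le O(h^5)$. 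Within this window, observing that $r,t$ are integers with $|t|<p$ (forcing $r\ne 0$, for otherwise $t\equiv 0\pmod p$ would give $t=0$ and hence $\vec w=0$, contradicting the linear independence of $\vec v_1,\vec v_2$) and carefully balancing the upper count bound against the Minkowski constraints should produce the claimed estimates on $|r|$ and $|t|$.
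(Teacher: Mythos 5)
Your geometry-of-numbers setup (the lattice $\Gamma$ of determinant $p$, the body $B$, the counting lemma, the use of $\lambda_3\le p/(3h)$, the elimination of $\lambda_3\le1$, and the case $\lambda_2\ge1$) all match the paper, and up to that point the proposal is sound. The gap you flag yourself in the case $\lambda_2<1$ is real, and your proposed fix does not close it.

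The wedge-type combination $\vec w = U_2\vec v_1 - U_1\vec v_2$ gives $|r|\ll h^3\lambda_1\lambda_2\ll h^3/T$, and the confinement $T=O(h^5)$ from Minkowski's second theorem does not help: you need $|r|\ll h^{3/2}T^{-1/2}$, and $h^3/T\le h^{3/2}T^{-1/2}$ only when $T\gg h^3$, while the hypotheses allow $T$ as small as $C_0h$. The lower bound $\lambda_1\lambda_2\gg h^{-5}$ bounds $T$ from above but does nothing to improve the upper bound on $|r|$. And if instead you use $\vec v_1$ alone (say when $U_1=0$), the best you get from $\lambda_1\le\lambda_2$ is $\lambda_1\ll T^{-1/2}$, hence $|r|\ll h^2T^{-1/2}$, still a factor $h^{1/2}$ too large. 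There is no "balancing" of the constraints you have written down that recovers the claimed exponents.

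The paper's proof takes a genuinely different route at precisely this point. It associates with $\vec v_i=(u_i,v_i,w_i)$, $i=1,2$, the quadratic polynomials $P_i(Z)=u_iZ^2+v_iZ+w_i$ and shows, via the resultant bound (Lemma~\ref{lem:Res}) together with $\lambda_1\lambda_2\ll T^{-1}$, that $|\Res(P_1,P_2)|<p$; since $p\mid\Res(P_1,P_2)$, the resultant vanishes, so $P_1$ and $P_2$ share a linear factor in $\Z[Z]$. The divisibility then yields relations such as $u_2=v_1k$ with $k\in\Z\setminus\{0\}$, whence $|v_1|\le|u_2|\ll\lambda_2h$; combined with the raw bound $|v_1|\ll\lambda_1h^2$ this gives the geometric-mean estimate $|v_1|\ll(\lambda_1\lambda_2h^3)^{1/2}\ll h^{3/2}T^{-1/2}$, and similarly for $|w_1|$. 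This resultant-and-divisibility step is the essential idea missing from your proposal; without it the exponents $h^{3/2}T^{-1/2}$ and $h^{5/2}T^{-1/2}$ are not attainable by linear algebra on the short vectors alone. The paper also separates this regime into $\lambda_1<1/(3h)$ and $\lambda_1\ge1/(3h)$ (i.e.\ $u_1=0$ versus $u_1\ne0$), since the degree of $P_1$ changes which form of the resultant bound applies and which coefficient of the common factor one tracks.
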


\begin{proof}
We remark that if we fix $U$ and $V$, then  there are  at most $2h^3/p+1$ possible values for $W$. In particular, we have
$$
T<36h^3(2h^3p^{-1}+1).
$$
Thus, since $C_0$ is sufficiently
large, we see that $h<c_0p^{1/3}$ for some small positive constant $c_0$.

We define the lattice
$$\Gamma = \{(u,v,w)\in\Z^3~:~us^2+vs+w\equiv 0 \pmod p\}$$
and the body
$$D = \{(u,v,w)\in\Z^3~:~|u|\le3h,\ |v|\le3h^2,\ |w|\le h^3\}.$$
We know that
$$\#(D\cap\Gamma)\ge T.$$
Therefore, by Lemma~\ref{lem:latpoints}, the successive minima
$\lambda_i=\lambda_i(D,\Gamma)$, $i=1,2,3$, satisfy the inequality
\begin{equation}
\label{ineqlambda}
\prod_{i=1}^n\min\{1,\lambda_i\}\ll T^{-1}.
\end{equation}
We can assume that $T$ is sufficiently large.
In particular, $\lambda_1\le1$. We consider separately
the following four cases.

{\it Case~1\/}: $\lambda_2>1$. Then the inequality~\eqref{ineqlambda} tells us
that $\lambda_1\ll T^{-1}$. By definition of $\lambda_1$, there is
a nonzero vector $(u,v,w)\in\lambda_1D\cap\Gamma$. We have
$$|u|\ll hT^{-1},\qquad |v|\ll h^2T^{-1} ,\qquad |w|\ll h^3T^{-1}.$$
Thus, assuming that $C_0$ is large enough, we see that $u=0$.
Since $T\gg h$, we see that $r=-v$, $t=w$ satisfy the desired bound.

{\it Case~2\/}: $\lambda_1<1/(3h)$, $\lambda_2\le 1$, and $\lambda_3>1$.
By definition of $\lambda_1$ and $\lambda_2$, there are linearly
independent vectors $(u_1,v_1,w_1)\in\lambda_1D\cap\Gamma$ and
$(u_2,v_2,w_2)\in\lambda_2D\cap\Gamma$. Moreover,
$\gcd(u_1,v_1,w_1)=1$. We see that
$|u_1|\le\lambda_1(3h)<1$. Hence,
 $u_1=0$.
 We observe that $\lambda_2\ge1/(3h)$. Indeed assume that this is not true. Then $u_2=0$ and we get that
$$
v_is+w_i\equiv 0 \pmod p,\quad i=1,2.
$$
Hence, $v_1w_2\equiv v_2w_1\pmod p$. Since the absolute values of the both hand side is not greater than
$$
3\lambda_1\lambda_2h^5<h^3/3<p/3,
$$
we obtain that $v_1w_2=v_2w_1$. This contradicts the fact that $(u_1,v_1,w_1)$ and $(u_2,v_2,w_2)$ are linearly independent.
We also note that $\lambda_1\ge1/(3h^2)$, since otherwise $u_1=v_1=0$,
implying $w_1=0$ (as $w_1\equiv 0\pmod p$ and $|w_1|\le h^3<p$).
In particular,
\begin{equation}
\label{eq:lambda1 9h3}
\lambda_1\lambda_2>1/(9h^3).
\end{equation}

By~\eqref{ineqlambda}, we have
\begin{equation}
\label{ineqlambda2}
\lambda_1\lambda_2\ll T^{-1}.
\end{equation}
We consider the polynomials
$$P_i(Z)=u_iZ^2 + v_iZ + w_i,\qquad i=1,2.$$
We see that $\deg P_1=1$ and $1\le\deg P_2\le 2$. If $\deg P_2=1$
then we conclude from Lemma~\ref{lem:Res} that
$$|\Res(P_1, P_2)|\ll h^5\lambda_1\lambda_2.$$
Using~\eqref{eq:lambda1 9h3}
we get
$$|\Res(P_1, P_2)|\ll h^8(\lambda_1\lambda_2)^2.$$
By~\eqref{ineqlambda2},
\begin{equation}
\label{ineqRes}
|\Res(P_1, P_2)|\ll h^8 T^{-2}.
\end{equation}
If $\deg P_2=2$ then we conclude from Lemma~\ref{lem:Res} that
$$|\Res(P_1, P_2)|\ll h^7\lambda_1^2\lambda_2\le h^7(\lambda_1\lambda_2)^2,$$
and due to~\eqref{ineqlambda2}, we get~\eqref{ineqRes} again.
On the other hand, we see that $\Res(P_1, P_2)$ is divisible by $p$ since
$P_1(s)\equiv P_2(s)\equiv 0\pmod p$.
If $C_0$ is chosen to be large enough,
we conclude that
$$
\Res(P_1, P_2)=0.
$$
Therefore, $\deg P_2=2$ and $P_2$ is divisible
by $P_1$ in $\Z[Z]$.
Thus,
$$
u_2w_1^2-v_2v_1w_1+w_2v_1^2=0.
$$
Hence, in view of $\gcd(v_1,w_1)=1$, we get, for some integer $k\not=0$,
\begin{equation}
\label{eq:Case2Geometry}
u_2=v_1k,\quad kw_1^2-v_2w_1+w_2v_1=0.
\end{equation}
We recall that
$$|u_2|\ll\lambda_2h, \qquad |v_2|\ll\lambda_2h^2, \qquad
|w_2|\ll\lambda_2h^3.
$$
In particular, from the first equality of~\eqref{eq:Case2Geometry} we get
$$
|v_1|\le |u_2|\ll \lambda_2 h.
$$
Together with $|v_1|\ll \lambda_1h^2$, we get that
$$
|v_1|\le (\lambda_1\lambda_2 h^3)^{1/2}.
$$
Now, the second equality of~\eqref{eq:Case2Geometry} implies that
$$
|w_1|\le 2|v_2|+2|w_2v_1|^{1/2}\ll \lambda_2h^2+(\lambda_1\lambda_2h^5)^{1/2}.
$$
Combining this with $|w_1|\ll \lambda_1h^3$ we obtain that
$$
|w_1|\ll (\lambda_1\lambda_2h^5)^{1/2}.
$$
Consequently, by~\eqref{ineqlambda2}
$$|v_1|\ll(\lambda_1\lambda_2h^3)^{1/2} \ll
h^{3/2}T^{-1/2},\qquad
|w_1|\ll(\lambda_1\lambda_2h^5)^{1/2}\ll
h^{5/2}T^{-1/2},$$
and we obtain the required inequality for $r=-v_1$,
$t=w_1$.

{\it Case~3\/}: $\lambda_1\ge1/(3h)$, $\lambda_2\le 1$, and $\lambda_3>1$.
Note that~\eqref{ineqlambda2} still holds.
Hence, $\lambda_1\le\lambda_2\ll hT^{-1}$.
By definition of $\lambda_1$ and $\lambda_2$, there are linearly
independent vectors $(u_1,v_1,w_1)\in\lambda_1D\cap\Gamma$ and
$(u_2,v_2,w_2)\in\lambda_2D\cap\Gamma$. We have
$$
|u_i|\ll h^2T^{-1},\quad |v_i|\ll h^3T^{-1},\quad |w_i|\ll h^4T^{-1}, \qquad i=1,2.
$$
As in {\it Case~2\/}, we consider polynomials
$$
P_i(Z)=u_iZ^2 + v_iZ + w_i, \qquad i=1,2,
$$
and prove that $\Res(P_1, P_2)=0$ and thus $P_1$ and $P_2$ have the same linear
factor $rZ+t$ with $\gcd(r,t)=1$. In particular,
$$
rs+t\equiv 0 \pmod p.
$$
Next, for some $i\in \{1,2\}$ we have $u_i\not=0$. For this $i$, the equality
$$
u_it^2-v_itr+w_ir^2=0,
$$
implies that
$$
u_i=rk,
$$
for some integer $k\not=0$.
In particular
$$
|r|\le |u_i|\le \lambda_i h\ll h^2/T,\quad |k|\le |u_i|\le \lambda_i h.
$$
Furthermore,
$$
kt^2-v_it+w_ir=0,
$$
implying
$$
|t|\le 2|v_i|+2(|w_i r|)^{1/2}\ll \lambda_i h^2+(\lambda_ih^3\lambda_ih)^{1/2}\ll \lambda_i h^2\ll h^3/T.
$$
This produces the required $r$ and $t$ with
$$
|r|\ll h^2T^{-1} \mand  |t|\ll h^3T^{-1}.
$$
that satisfy the desired bound (since $T\gg h$).

{\it Case~4\/}: $\lambda_3\le1$.
By definition of $\lambda_i$, there are linearly
independent vectors $(u_i,v_i,w_i)\in\lambda_iD\cap\Gamma$, $i=1,2,3$.
By~\eqref{ineqlambda}, we have $\lambda_1\lambda_2\lambda_3\ll T^{-1}$.
We consider the determinant
\begin{equation*}
D = \det  \(
  \begin{array}{cccccccc}
    u_1 & v_1 & w_1\\
    u_2 & v_2 & w_2\\
    u_3 & v_3 & w_3\\
  \end{array}
\).
\end{equation*}
Clearly,
$$
|D|\ll h^6\lambda_1\lambda_2\lambda_3 \ll h^6T^{-1}.
$$
On the other hand, from
$$u_is^2 + v_is + w_i =0,\qquad i=1,2,3,
$$
we conclude that $D$ is divisible by $p$. Therefore,
for a sufficiently large $C_0$ we derive $D=0$,
but this contradicts linear independence of the vectors
$(u_i,v_i,w_i)$, $i=1,2,3$. Thus this case is impossible.
\end{proof}

\subsection{Products with variables from intervals}
\label{sec:Equation}

First consider the case of rational values of $\sigma$ and obtain
an upper bound for the number of solutions of the equation~\eqref{eq:eq x,y}
satisfying~\eqref{eq:x neq y}. Then we consider the case
of irrational $\sigma$ and show that in this case that number is essentially
smaller that the number of trivial solutions of~\eqref{eq:eq x,y}.

\begin{lemma} \label{lem:fix_s,x}
Let $\nu \ge 1$ be a fixed integer.
Assume that $\sigma = t/r$ for some integers $r\ge 1$ and $t$
with $\gcd(r,t)=1$. Given an interval
$\cI=\{x_0+1,\ldots,x_0+u\}$, we fix some $v \in\cI$. Then the number $I$
of solutions of the equation
$$
(x_1+\sigma)\ldots(x_{\nu}+\sigma)=  (y_1+\sigma)\ldots(y_{\nu}+\sigma)\neq0,
$$
with
$$
x_1= v, \qquad
x_2,\ldots,x_{\nu}, y_1,\ldots,y_{\nu}\in\cI,
$$
and satisfying~\eqref{eq:x neq y},
does not exceed
$$
I \le \frac{u^{\nu}}{|vr+t|}
\exp\(O\(\frac{\log (u+2)}{\log\log (u+2)}\)\)
$$
\end{lemma}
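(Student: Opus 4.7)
The plan is to convert the equation into an integer relation and then apply a divisor-function argument in the spirit of Lemma~\ref{lem:CommonSols1}. Multiplying through by $r^\nu$ and setting $w=rv+t$, $X_i=rx_i+t$, $Y_i=ry_i+t$, the equation becomes $w\prod_{i=2}^{\nu}X_i=\prod_{i=1}^{\nu}Y_i$ in $\Z$, with $\gcd(r,w)=\gcd(r,t)=1$.

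First, I would introduce the difference polynomial
\[
R(Z):=\prod_{i=1}^\nu(y_i+Z)-\prod_{i=1}^\nu(x_i+Z)\in\Z[Z],
\]
of degree at most $\nu-1$ and vanishing at $Z=\sigma=t/r$. Since $rZ-t$ is primitive in $\Z[Z]$, Gauss's Lemma gives $rR(Z)=(rZ-t)Q(Z)$ with $Q\in\Z[Z]$. Evaluating at $Z=-v$, the condition $x_1=v$ forces $\prod_{i=1}^\nu(x_i-v)=0$, so $R(-v)=\prod_{i=1}^\nu(y_i-v)$ and the identity $rR(-v)=-wQ(-v)$ combined with $\gcd(r,w)=1$ shows that $w\mid\prod_{i=1}^\nu(y_i-v)$. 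By the disjointness condition \eqref{eq:x neq y}, $y_i\neq v$ for all $i$, so $\prod_{i=1}^\nu(y_i-v)$ is a nonzero integer of absolute value at most $(u-1)^\nu$.

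Next, I would count admissible tuples $\vec{y}$. Writing $\prod_{i=1}^\nu(y_i-v)=wm$ for a nonzero integer $m$ with $|m|\leq u^\nu/|w|$ (if $|w|>u^\nu$ the bound is trivial), the tuple $(y_1-v,\dots,y_\nu-v)$ is an ordered factorization of $wm$ into $\nu$ nonzero integers, so by the $\nu$-fold divisor bound the number of admissible $\vec{y}$ with a given $m$ is at most $2^\nu\tau_\nu(|wm|)\leq\exp(O(\log(u+2)/\log\log(u+2)))$ because $|wm|\leq u^\nu$. Summing over the at most $2u^\nu/|w|$ choices of $m$ gives at most $(u^\nu/|w|)\exp(O(\log(u+2)/\log\log(u+2)))$ admissible $\vec{y}$.

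The main obstacle is the final step: for each fixed admissible $\vec{y}$, bounding the number of admissible $(x_2,\dots,x_\nu)$. The equation prescribes $\prod_{i=2}^\nu X_i=L$ for the fixed nonzero integer $L=\prod_{i=1}^\nu Y_i/w$, so this count is at most $2^{\nu-1}\tau_{\nu-1}(|L|)$. To obtain $\exp(O(\log(u+2)/\log\log(u+2)))$ one needs $|L|=u^{O(1)}$, but a priori $|L|$ depends on $r$ and could be huge. Assuming at least one solution exists, I would control $|r|$ by combining the identity $\prod|Y_i|=|w|\prod|X_i|$ with the lower bound $|Y_i|\geq|r|-|w|$ (available once $|r|\geq 2|w|$, since $|y_i-v|\geq 1$) and the upper bound $|X_i|\leq|r|u+|w|$: these force $|r|\ll|w|u^{\nu-1}$, and since $|w|\leq u^\nu$ we conclude $|r|=O(u^{2\nu-1})$. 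Hence $|X_i|=O(u^{2\nu})$ and $|L|=u^{O(1)}$, so the divisor bound applies. Multiplying the $\vec{y}$-count and the $\vec{x}$-count produces the claimed estimate $I\leq(u^\nu/|vr+t|)\exp(O(\log(u+2)/\log\log(u+2)))$.
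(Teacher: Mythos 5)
Your argument is correct and follows the same overall plan as the paper: clear denominators by $r^\nu$, show that $w:=vr+t$ divides the nonzero integer $\prod_{i}(y_i-v)$ (forcing $|w|\le u^\nu$ in particular), count $\vec y$ via the divisor function, then count $\vec x$ given $\vec y$ via the divisor function. The two places where you diverge are worth noting. For the divisibility $w\mid\prod_i(y_i-v)$, the paper reduces the identity modulo $|vr+t|$ directly (each $y_ir+t\equiv(y_i-v)r$, and $\gcd(r,vr+t)=1$), whereas you pass through the difference polynomial and Gauss's lemma to write $rR(Z)=(rZ-t)Q(Z)$ and evaluate at $Z=-v$; both are valid, the paper's is a touch more elementary. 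The more substantive difference is the final step. The paper applies the \emph{same} modular argument with $x_j$ in place of $v$ for each $j$: modulo $|x_jr+t|$ one gets $\prod_i(y_i-x_j)\equiv 0$, a nonzero integer (here the full disjointness $\{x_1,\ldots,x_\nu\}\cap\{y_1,\ldots,y_\nu\}=\emptyset$ is used) of absolute value $<u^\nu$, giving the uniform bound $|x_jr+t|\le u^\nu$ for every $j$ at one stroke, so $|L|\le u^{\nu(\nu-1)}$ and the divisor bound applies immediately. You instead bound $r$ globally from the identity $\prod_i|Y_i|=|w|\prod_{i\ge2}|X_i|$, obtaining $r\ll u^{2\nu-1}$ and hence $|X_j|\ll u^{2\nu}$; this is sound and still yields $|L|=u^{O(1)}$, so the same quality of estimate follows, but it is more roundabout than the paper's uniform factor bound, and it also needs a (correct but slightly delicate) case split on whether $r\ge 2|w|$.
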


\begin{proof} We rewrite the above equation as
 \begin{equation}
\label{eq2:eq x,y}
(x_1r+t)\ldots(x_\nu r+t) = (y_1r+t)\ldots(y_\nu r+t).
\end{equation}
 Given a solution to the equation~\eqref{eq2:eq x,y},  we
 fix some $j=1, \ldots, \nu$ and for $x_j$ consider
 $X_j =  |x_jr+t|$ (note that $X_j\ne 0$). Taking into account that for $i=1,\ldots,\nu$
$$y_ir+t\equiv (y_i-x_j)r \pmod {X_j},$$
we conclude from~\eqref{eq2:eq x,y}
that $(y_1-x_j)\ldots(y_\nu-x_j)r^\nu  \equiv 0 \pmod {X_j}$. Clearly $\gcd(r,X_j)=1$.  Therefore,
$(y_1-x_j)\ldots(y_\nu-x_j)  \equiv 0 \pmod {X_j}$. In particular,
$(y_1-v)\ldots(y_\nu-v)  \equiv 0 \pmod {X_j}$.

This implies that
\begin{equation}
\label{eq:Xj}
X_j =| x_jr+t | \le u^{\nu}, \qquad j = 1, \ldots, \nu.
\end{equation}

We now write $(y_1-v)\ldots(y_\nu-v) =X_1 w$ for some nonzero integer $w$
with $|w|<h^{\nu}/X_1$. Therefore, by the well-known
bound on the divisor function, for each fixed $w$ we have at most
$\exp\(O\(\log h/\log\log h\)\)$
possibilities for $y_1,\ldots,y_{\nu}$. Thus the total
number of possibilities for $y_1,\ldots,y_{\nu}$
at most
$$
\frac{h^{\nu}}{X_1} \exp\(O\(\log h/\log\log h\)\)=
\frac{h^{\nu}}{|vr+t|} \exp\(O\(\log h/\log\log h\)\) .
$$

When   $y_1,\ldots,y_{\nu}$ are fixed, using the bound~\eqref{eq:Xj}
 and  the bound  on the divisor function we obtain
 $\exp\(O\(\log h/\log\log h\)\)$
possibilities for $x_1,\ldots,x_{\nu}$, which concludes the proof.
\end{proof}

\begin{lemma} \label{lem:fix_s}
Let $\nu \ge 1$ be a fixed integer.
Assume that $\sigma = t/r$ for some integers $r\ge 1$ and $t$
with $\gcd(r,t)=1$.
Then, for $h\ge 3$,  the number $N$
of solutions of the equation~\eqref{eq:eq x,y}
satisfying~\eqref{eq:x neq y} does not exceed
$$N \le \frac{h^{\nu+1}}{\max\{hr,|t|\}}
\exp\(O\(\frac{\log h}{\log\log h}\)\).
$$
\end{lemma}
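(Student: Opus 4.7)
My plan is to apply Lemma~\ref{lem:fix_s,x} (with $\cI = \{1, \ldots, h\}$) for each choice of $v = x_1 \in \{1, \ldots, h\}$ and combine the resulting estimate with a complementary divisor bound that handles values of $v$ with small $|vr+t|$. Lemma~\ref{lem:fix_s,x} yields
\[
I(v) := \#\{\text{solutions with } x_1 = v\} \le \frac{h^\nu}{|vr+t|}\exp\!\Bigl(O\!\Bigl(\tfrac{\log h}{\log\log h}\Bigr)\Bigr).
\]
This is weak when $|vr+t|$ is small, so I complement it with the trivial bound $I(v) \le h^{\nu-1}\exp(O(\log h/\log\log h))$: for each of the $h^{\nu-1}$ choices of $(x_2,\ldots,x_\nu) \in [1,h]^{\nu-1}$ the product $\tilde P := \prod_i(x_ir+t) \in \Z\setminus\{0\}$ is determined, and the number of $y$-tuples with $\prod_j(y_jr+t) = \tilde P$ is at most the $\nu$-fold divisor function $d_\nu(|\tilde P|) \le \exp(O(\log h/\log\log h))$ (taking $|\tilde P| \le h^{O(1)}$, which we may assume since the large-$r$ argument below handles the rest directly). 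Taking the minimum,
\[
I(v) \le \frac{h^\nu}{\max\{h,|vr+t|\}}\exp\!\Bigl(O\!\Bigl(\tfrac{\log h}{\log\log h}\Bigr)\Bigr).
\]

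Summing $N = \sum_{v=1}^h I(v)$ reduces the problem to bounding $\sum_{v=1}^h 1/\max\{h,|vr+t|\}$. The crucial observation is that the values $|vr+t|$ for $v = 1, \ldots, h$ lie along at most two arithmetic progressions of common difference $r$: at most $2h/r + 1$ of them satisfy $|vr+t| \le h$ (contributing at most $2/r + 1/h$ to the sum), while the remaining ones form AP tails with step $r$ and first term exceeding $h$, contributing at most $(2/r)\log(2 + M/h)$ via $\sum_k 1/(a + kr) \le r^{-1}\log(1 + \ell r/a)$, with $M := \max\{hr,|t|\}$. Combining, $\sum_v 1/\max\{h,|vr+t|\} \ll 1/r + 1/h + \log(2+M/h)/r$; after multiplying by $h^\nu$ and absorbing the logarithmic factor into $\exp(O(\log h/\log\log h))$, this gives the claimed bound $N \le h^{\nu+1}/M \cdot \exp(O(\log h/\log\log h))$ in the main regime.

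For very large $r$, I multiply~\eqref{eq:eq x,y} through by $r^\nu$ to obtain the integer identity
\[
\sum_{k=1}^{\nu} c_k\,r^k t^{\nu-k} = 0, \qquad c_k := e_k(\vec x) - e_k(\vec y), \qquad |c_k| \le 2\binom{\nu}{k}h^k,
\]
with $e_k$ denoting the elementary symmetric polynomials. Iterative division by $r$ together with $\gcd(r,t) = 1$ force $r \mid c_k$ for each $k = 1, 2, \ldots$; hence once $r > C_\nu h^{\nu-1}$ for a constant $C_\nu$ depending only on $\nu$, the bound $|c_k| < r$ forces every $c_k = 0$, so the multisets $\{x_i\}$ and $\{y_j\}$ coincide, violating the disjointness~\eqref{eq:x neq y}. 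Therefore $N = 0$ in this regime and the claimed bound holds trivially.

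The main obstacle will be the intermediate regime $h\,\exp(O(\log h/\log\log h)) < r \le C_\nu h^{\nu-1}$, where the $1/h$ contribution to the sum above corresponds to $O(1)$ exceptional $v$'s with $|vr+t| \le h$ whose naive trivial estimate $I(v) \le h^{\nu-1}\exp(\ldots)$ slightly exceeds the target $h^\nu/r \cdot \exp(\ldots)$; there one must invoke the partial vanishing $c_1 = \cdots = c_{k_0} = 0$ enforced by the above algebraic identity for $r$ in this range, and use it to refine the divisor-function estimate on $I(v)$ at these exceptional $v$'s so that the final contribution matches the claim.
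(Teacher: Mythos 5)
Your approach diverges from the paper's in one crucial way: you apply Lemma~\ref{lem:fix_s,x} with the \emph{fixed} interval $\cI=\{1,\ldots,h\}$ for every $v$, which yields $I(v)\le h^{\nu}/\max\{h,|vr+t|\}\cdot\exp(\ldots)$. The paper instead sorts $\{1,\ldots,h\}$ as $|z_1r+t|\le\cdots\le|z_hr+t|$, observes that $\{z_1,\ldots,z_u\}$ is always an interval of $u$ consecutive integers, and applies Lemma~\ref{lem:fix_s,x} to that \emph{growing} interval, giving $N_u\le u^{\nu}/|z_ur+t|\cdot\exp(\ldots)$. Combined with $|z_ur+t|\ge ur/4$ (valid whenever $|t|<2hr$), this makes each term $\ll u^{\nu-1}/r$, so the sum is $\ll h^{\nu}/r$ with no case analysis in $r$ and no near-origin exceptional $v$'s to repair. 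The paper is effectively exploiting that a variable equal to $z_u$ forces all other variables into an interval of length $u$, not $h$; this is exactly the saving your argument lacks.

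As you yourself note, your version has a genuine gap in the intermediate range roughly $h\exp(O(\log h/\log\log h))<r\le C_\nu h^{\nu-1}$ (with $|t|\lesssim hr$, so the target is $\asymp h^{\nu}/r\cdot\exp(\ldots)$). There your decomposition produces an $O(1)$ number of $v$'s with $|vr+t|\le h$, each contributing $h^{\nu-1}\exp(\ldots)$ via the trivial divisor bound, and $h^{\nu-1}$ cannot be absorbed into $h^{\nu}/r\cdot\exp(O(\log h/\log\log h))$ once $r$ grows past $h\exp(\omega(\log h/\log\log h))$. Your suggested repair — use the forced vanishing $c_1=\cdots=c_{k_0}=0$ of elementary symmetric differences to sharpen $I(v)$ at the exceptional $v$'s — is plausible in spirit, but it is not carried out: you would need to turn those $k_0$ symmetric‑function constraints into an explicit reduction in the count of admissible $(x_2,\ldots,x_\nu,y_1,\ldots,y_\nu)$ from $h^{\nu-1}$ down to roughly $h^{\nu-k_0-1}$, and it is not obvious how the constraints interact with the divisor-counting step. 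So the proposal as written is incomplete precisely in the range you flag. Your complementary endpoint observations (the $r\gg h^{\nu-1}$ vanishing argument, the clean $|t|\ge 2hr$ case) are correct and are essentially covered implicitly by the paper's uniform treatment; the sorting-plus-growing-interval trick is the piece you would need to make the proof close without the case split.
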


\begin{proof} We take a reordering $\{z_1,\ldots,z_h\}$
of the elements from $\{1,\ldots,h\}$ so that
\begin{equation}
\label{eq:order}
|z_1r+t|\le\ldots\le|z_hr+t|.
\end{equation}
We notice that for any $u=1,\ldots,h$ the set
$\{z_1,\ldots,z_u\}$ is a set of $u$ consecutive integers.
For $u=1,\ldots,h$ we denote by $N_u$ the number of solutions
of~\eqref{eq:eq x,y} satisfying~\eqref{eq:x neq y} such that
$\{x_1,\ldots,x_\nu,y_1,\ldots,y_\nu\}\subseteq\{z_1,\ldots,z_u\}$
and either $x_i=z_u$ or $y_i=z_u$ for some $i=1,\ldots,u$.
Clearly,
\begin{equation}
\label{eq:Nis_a_sum}
N_1=0,\quad N=\sum_{u=2}^h N_u,\quad N_u\le2\nu N^*_u\,(u\ge1)
\end{equation}
where $N^*_u$ is the number of solutions
of~\eqref{eq2:eq x,y} satisfying~\eqref{eq:x neq y} with $x_1=z_u$
and $\cI=\{z_1,\ldots,z_u\}$.
By Lemma~\ref{lem:fix_s,x}, we have
$$N_u\le\frac{u^{\nu}}{|z_ur+t|}
\exp\(O\(\frac{\log h}{\log\log h}\)\).
$$

If $|t|\ge2hr$ then $|z_ur+t|\ge|t/2|$ for any $u$, and we get
from~\eqref{eq:Nis_a_sum}
$$N\le\sum_{u=2}^h\frac{2u^{\nu}}{|t|}
\exp\(O\(\frac{\log h}{\log\log h}\)\)
\le\frac{2h^{\nu+1}}{|t|}
\exp\(O\(\frac{\log h}{\log\log h}\)\).$$
If $|t|<2hr$ then, using that there cannot be three consecutive
equal elements in the sequence~\eqref{eq:order} we obtain
the inequality
$$|z_ur+t|\ge(u-1)r/2\ge ur/4
$$
which yields
$$N\le\sum_{u=2}^h\frac{4u^{\nu-1}}{r}
\exp\(O\(\frac{\log h}{\log\log h}\)\)
\le\frac{4h^{\nu}}{r}
\exp\(O\(\frac{\log h}{\log\log h}\)\)
$$
and  completes the proof.
\end{proof}

Next, we need a bound on the number of solutions $N(h)$ to the equations
$$
uv = xy, \qquad 1 \le u,v,x,y \le h,
$$
which is given in~\cite[Theorem~3]{ACZ}:

\begin{lemma} \label{lem:ACZ}
For $h > 1$, we have
$$
N(h) = \frac{12}{\pi^2} h^2 \log h +  \kappa h^2 + O\(h^{19/13} (\log h)^{19/13}\).
$$
for some constant $\kappa$.
\end{lemma}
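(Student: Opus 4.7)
The plan is to reduce $N(h)$ to a sum involving Euler's totient function via a greatest-common-divisor parameterization, then extract the main term by elementary analytic estimates and isolate the fractional-part sums responsible for the error.

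First, given a solution $(u,v,x,y)$ of $uv = xy$, set $g = \gcd(u,x)$ and write $u = ga$, $x = gb$ with $\gcd(a,b) = 1$. The equation then forces $av = by$ and, since $\gcd(a,b) = 1$, we obtain $v = bw$ and $y = aw$ for some positive integer $w$. The four box constraints reduce to $g, w \le h/\max(a,b)$. Collecting the diagonal contribution $a = b = 1$ and using the symmetry $a \leftrightarrow b$ when $a \ne b$, this yields
\[
N(h) = h^2 + 2 \sum_{b=2}^{h} \varphi(b) \left\lfloor \frac{h}{b} \right\rfloor^2.
\]

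Next, expanding $\lfloor h/b \rfloor = h/b - \{h/b\}$ and squaring splits the sum into a polynomial-in-$1/b$ piece and two fractional-part pieces. For the polynomial piece, Abel summation combined with the classical estimate $\sum_{b \le x} \varphi(b) = (3/\pi^2) x^2 + O(x \log x)$ gives
\[
\sum_{b=2}^{h} \frac{\varphi(b)}{b^2} = \frac{6}{\pi^2} \log h + c_1 + O\!\left(\frac{\log h}{h}\right),
\]
which produces the leading term $(12/\pi^2) h^2 \log h$ and a clean $O(h^2)$ contribution that can be absorbed into $\kappa$ together with similar secondary constant-order contributions from the fractional-part pieces.

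The main obstacle is to bound the genuinely oscillatory sums
\[
\sum_{b \le h} \varphi(b) \left\{\frac{h}{b}\right\} \quad \text{and} \quad \sum_{b \le h} \varphi(b) \left\{\frac{h}{b}\right\}^2
\]
sharply enough to yield the $O(h^{19/13}(\log h)^{19/13})$ error. I would Fourier-expand $\{x\}$ (with a Vaaler-type truncation to control the pointwise singularity) so that everything reduces to estimating exponential sums $\sum_{b \le h} \varphi(b)\, e(kh/b)$ for $1 \le |k| \le K$. Writing $\varphi = \mu \ast \mathrm{id}$ converts these to double sums $\sum_{d} \mu(d) \sum_{m} m\, e(kh/(dm))$, and exponent-pair techniques of van der Corput / Graham--Kolesnik type then produce nontrivial bounds. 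Balancing the truncation parameter $K$ against these bounds and handling a trivial range of small or large $b$ produces the exponent $19/13$. This optimization is the technical heart of the argument; the earlier steps are largely bookkeeping.
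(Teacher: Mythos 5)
This lemma is not proved in the paper; it is quoted verbatim from Ayyad, Cochrane and Zheng (\cite[Theorem~3]{ACZ}), so there is no internal argument to compare against. On its own terms your combinatorial reduction is correct: with $g=\gcd(u,x)$, $u=ga$, $x=gb$, $\gcd(a,b)=1$, the equation $uv=xy$ forces $y=aw$, $v=bw$ for a unique $w\ge1$, the box constraints become $g,w\le h/\max(a,b)$, and splitting off $a=b=1$ and symmetrizing in $a\neq b$ gives
$$
N(h)=\lfloor h\rfloor^2+2\sum_{b=2}^{\lfloor h\rfloor}\varphi(b)\left\lfloor\frac{h}{b}\right\rfloor^2 .
$$
The main-term extraction via $\lfloor h/b\rfloor^2 = h^2/b^2-2(h/b)\{h/b\}+\{h/b\}^2$ and $\sum_{b\le x}\varphi(b)/b^2 = (6/\pi^2)\log x + c_1 + O((\log x)/x)$ is also sound, and the means of $\{h/b\}$ and $\{h/b\}^2$ do contribute further constant multiples of $h^2$ that fold into $\kappa$. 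One slip worth fixing: the cross term carries the weight $h\,\varphi(b)/b$, so the first oscillatory sum you must control is $\sum_{b\le h}\bigl(\varphi(b)/b\bigr)\{h/b\}$ (multiplied by $4h$), not $\sum_b\varphi(b)\{h/b\}$.

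The genuine gap is the final paragraph, which is precisely where the content of the lemma lives. Carrying out the bookkeeping you describe naively --- Abel summation against $\sum_{b\le x}\varphi(b)/b=(6/\pi^2)x+O(\log x)$ over the roughly $\sqrt h$ intervals on which $\lfloor h/b\rfloor$ is constant, plus trivial estimation on $b\le\sqrt h$ --- already yields an error of order $h^{3/2}$ up to logarithms, and the whole point of Lemma~\ref{lem:ACZ} is the power saving from $3/2$ down to $19/13$. You name the right tools for that saving (a Vaaler truncation, $\varphi=\mu\ast\mathrm{id}$, exponent pairs applied to $\sum_m m\,e(kh/(dm))$), and this is indeed the flavor of argument in~\cite{ACZ}; but no exponent pair is actually chosen, no bound on $\sum_m m\,e(kh/(dm))$ is stated, the resulting estimate is never summed over $d$ and $k$, and the truncation length $K$ is never balanced against it. The exponent $19/13$ therefore appears by assertion rather than derivation, and as written the proposal reduces the lemma to an exponential-sum optimization of essentially the same difficulty and stops there. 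To repair it you must carry the exponent-pair bound through the $d$- and $k$-sums and perform the balancing; alternatively, simply cite~\cite{ACZ} for the analytic step, as the paper does.
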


Note that~\cite[Theorem~3]{ACZ} gives an explicit value of $\kappa$. Furthermore,  the error term in the asymptotic formula of Lemma~\ref{lem:ACZ}
has recently been improved in~\cite{Shi}, but this has no implication on
our results.

Now we consider irrational values of $\sigma$.

We say that a solution of the equation~\eqref{eq:eq x,y} is
trivial if $(y_1,\ldots,y_\nu)$ is a permutation of
$(x_1,\ldots,x_\nu)$ and nontrivial otherwise. It is easy to see
that in the case when $\sigma$ is  transcendental  or algebraic of
degree $d\ge\nu$, the equation~\eqref{eq:eq x,y} has only trivial
solutions. Thus, it is enough to consider the case when  $\sigma$
is of  degree $d<\nu$.

\begin{lemma} \label{lem:fix_s2}
For every $\nu\ge d\ge2$, $h\ge3$ and algebraic number $\sigma$
of degree $d$, the number $N_\nu(h,\sigma)$
of solutions of the equation~\eqref{eq:eq x,y}
satisfying~\eqref{eq:x neq y} does not exceed
$$N_\nu(h,\sigma) \le h^{\nu-d+1}\exp\(O\(\frac{\log h}{\log\log h}\)\).$$
\end{lemma}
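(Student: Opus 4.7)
My plan is to adapt the argument underlying Lemma~\ref{lem:fix_s}, replacing the role played by $|vr+t|$ (for rational $\sigma=t/r$) by the positive integer $|m_\sigma(-v)|$ attached to the primitive minimal polynomial $m_\sigma\in\Z[Z]$ of $\sigma$. The starting observation: for any solution $(\mathbf{x},\mathbf{y})$ of~\eqref{eq:eq x,y} satisfying~\eqref{eq:x neq y}, the polynomial $R(Z)=P_{\mathbf{x}}(Z)-P_{\mathbf{y}}(Z)\in\Z[Z]$ has degree at most $\nu-1$ and vanishes at $Z=\sigma$; since $m_\sigma$ is primitive, Gauss's lemma gives $m_\sigma\mid R$ in $\Z[Z]$, so we may write $R=m_\sigma Q$ with $Q\in\Z[Z]$ of degree at most $\nu-1-d$.

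The first key step is an algebraic analogue of Lemma~\ref{lem:fix_s,x}: for an interval $\cI=\{x_0+1,\ldots,x_0+u\}\subseteq[1,h]$ and a fixed $v\in\cI$, the number $I$ of solutions of~\eqref{eq:eq x,y} with $x_1=v$, all other $x_i,y_j\in\cI$, and satisfying~\eqref{eq:x neq y} is at most
$$
I\le\frac{u^{\nu}}{|m_\sigma(-v)|}\exp\!\left(O\!\left(\frac{\log(u+2)}{\log\log(u+2)}\right)\right).
$$
Evaluating $R$ at $Z=-v$ (where $P_{\mathbf{x}}(-v)=0$ because $x_1=v$) yields $\prod_{j}(y_j-v)=-m_\sigma(-v)\,Q(-v)$; since $|y_j-v|<u$ the integer $Q(-v)$ takes at most $2u^{\nu}/|m_\sigma(-v)|+1$ values, and for each fixed value of the product $\prod_j(y_j-v)$ the classical divisor bound in $\Z$ gives $\exp(O(\log u/\log\log u))$ choices of $\mathbf{y}$. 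For each such $\mathbf{y}$, the tuple $(x_2,\ldots,x_\nu)$ is in turn determined up to $\exp(O(\log u/\log\log u))$ possibilities via iterated application of Lemma~\ref{lem:Div ANF} to the factorisation of $r^{\nu-1}\prod_j(y_j+\sigma)/(v+\sigma)$ in $\Z_K$, where $r$ is the leading coefficient of $m_\sigma$ (used to clear denominators so that we work with algebraic integers).

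The second step is to order $z_1,\ldots,z_h$ as a permutation of $\{1,\ldots,h\}$ with $|m_\sigma(-z_1)|\le\cdots\le|m_\sigma(-z_h)|$. Since $m_\sigma$ has degree $d\ge2$ with simple roots, the real sub-level set $\{z\in\R:|m_\sigma(-z)|\le t\}$ is contained in an interval of length $\ll t^{1/d}$ (as $|m_\sigma(-z)|\asymp|z|^{d}$ outside a bounded neighbourhood of the real roots of $m_\sigma$). Consequently, for all $u$ above a constant $u_0=u_0(\sigma)$ one has both
$$
|m_\sigma(-z_u)|\gg u^{d}\qquad\text{and}\qquad\operatorname{diam}\{z_1,\ldots,z_u\}\ll u.
$$
Let $N_u$ be the number of solutions with $x_1=z_u$ and every remaining variable in $\{z_1,\ldots,z_u\}$. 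Each solution is counted in some such $N_u$ (for the largest index $u$ such that $z_u$ appears among the $2\nu$ variables, with at most $2\nu$ permutations of which variable plays the role of $x_1$), so $N_\nu(h,\sigma)\le 2\nu\sum_{u=1}^{h}N_u$. Applying the first step with $\cI$ the interval hull of $\{z_1,\ldots,z_u\}$ (of length $\ll u$) and $v=z_u$ yields $N_u\ll u^{\nu-d}\exp(O(\log h/\log\log h))$, and hence
$$
N_\nu(h,\sigma)\ll\sum_{u=1}^{h}u^{\nu-d}\exp\!\left(O\!\left(\frac{\log h}{\log\log h}\right)\right)\ll h^{\nu-d+1}\exp\!\left(O\!\left(\frac{\log h}{\log\log h}\right)\right).
$$

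The main obstacle will be the geometric successive-minima step: proving rigorously that the $u$-th smallest value $|m_\sigma(-z_u)|$ grows like $u^{d}$ and that the corresponding set of $z$'s is contained in an interval of length $O(u)$. This rests on the fact that $m_\sigma$ has degree $d$ with distinct algebraic roots (by irreducibility) and on the polynomial growth of $m_\sigma(-z)$ outside a bounded neighbourhood of these roots. The finitely many small indices $u<u_0$ contribute only $O(1)$ solutions each and are harmlessly absorbed into the exponential factor.
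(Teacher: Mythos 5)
Your proof attempts to transplant the ordering trick used in the rational case (Lemma~\ref{lem:fix_s}), but this runs into a genuine gap at the geometric step. The paper's own proof takes a simpler route and sidesteps the issue entirely.

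The gap is the claim that the sub-level set $\{z\in\R:|m_\sigma(-z)|\le t\}$ is ``contained in an interval of length $\ll t^{1/d}$,'' and the consequent assertion $\operatorname{diam}\{z_1,\ldots,z_u\}\ll u$. This is false for irreducible polynomials having more than one widely separated real root. For instance, a totally real irreducible cubic whose roots are, say, near $1$, $2$, and $100$ produces a polynomial $m_\sigma(-Z)$ whose sub-level set at any small threshold consists of three separate short intervals; the ordered sequence $z_1,z_2,z_3$ is then something like $1,2,100$, with diameter far exceeding $u=3$. What Cartan's lemma actually gives is that the sub-level set has total \emph{measure} $\ll t^{1/d}$ and at most $d$ connected components --- not that it is a single interval. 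Your growth estimate $|m_\sigma(-z_u)|\gg u^d$ (for $u\gg d$) does follow from this, but the interval-hull step does not, and without $\operatorname{diam}\{z_1,\ldots,z_u\}\ll u$ the bound in your first step degrades from $u^\nu/|m_\sigma(-v)|$ to $h^\nu/|m_\sigma(-v)|\ll h^\nu/u^d$, whose sum over $u$ only gives the useless $O(h^\nu)$ when $d\ge2$.

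The paper avoids all of this by bounding $|Q(-v)|$ directly rather than via $\prod_j|y_j-v|$. Writing $R=m_\sigma Q$, the coefficients of $R$ satisfy $r_j\ll h^{j+1}$, and Lemma~\ref{lem:PolCoef} then gives the same coefficient bounds for the cofactor $Q$; evaluating at $Z=-v$ with $1\le v\le h$ yields $|Q(-v)|\ll h^{\nu-d}$ \emph{uniformly} in $v$, with a constant depending only on $\nu$. Since $\prod_j(y_j-v)=-m_\sigma(-v)Q(-v)$ and $Q(-v)\ne0$ (by~\eqref{eq:x neq y}), the product on the left takes at most $O(h^{\nu-d})$ values, the divisor bound controls $\mathbf{y}$, and Lemma~\ref{lem:Div ANF} in $\Z_\K$ then controls $(x_2,\ldots,x_\nu)$. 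This gives $N_v\ll h^{\nu-d}\exp(O(\log h/\log\log h))$ for every $v$, and summing over $v=1,\ldots,h$ finishes the proof without any ordering, sub-level-set estimate, or interval restriction. In short: replace your $\prod_j|y_j-v|$ bound on $|Q(-v)|$ by the Lemma~\ref{lem:PolCoef} bound, and the rest of your argument collapses to the paper's.
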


\begin{proof} Let $P(X)\in \Z[X]$ be an irreducible
primitive polynomial such that $P(\sigma)=0$. Clearly, $\deg P=d$.
Next, we consider the polynomial
$$
G(X)=(x_1+X)\ldots (x_{\nu}+X)-(y_1+X)\ldots (y_{\nu}+X)
$$
and factor $G$ into irreducible over $\Q$ polynomials $f\in\Z[X]$.
Since $G(\sigma)=0$, for some irreducible factor $f\in\Z[X]$
of $G$ we have $f(\sigma)=0$. Thus, $f(X)=rP(X)$ for some rational
number $r$, and since $P$ is primitive, $r$ is integer. Thus,
$G(X)=A(X)P(X)$ for some $A\in\Z[X]$.

We have
$$A(X)=\sum_{j=0}^{\nu-d-1}a_jX^{\nu-j}.$$
By Lemma~\ref{lem:PolCoef}, we have
\begin{equation}
\label{est_a_j}
a_j\ll h^{j+1}\quad j=0,\ldots,\nu-d-1,
\end{equation}
where the implied constants depend only on $\nu$.

For $v=1,\ldots,h$,  we now estimate the number $N_v$
of solutions of the equation~\eqref{eq:eq x,y}
satisfying~\eqref{eq:x neq y} with $x_1=v$. Clearly,
\begin{equation}
\label{N=sum_N_v}
N=\sum_{v=1}^h N_v.
\end{equation}
From $G(-v)=P(-v)A(-v)$ we get that
$$
-(y_1-v)\ldots (y_\nu-v)=P(-v)A(-v).
$$
By~\eqref{est_a_j} we have $A(-v)\ll h^{\nu-d}$. Therefore,
there are $O\(h^{\nu-d}\)$ possible values for
$|(y_1-v)\ldots (y_\nu-v)|$. In turn, this implies that
there are at most
$h^{\nu-d}\exp\(O\({\log h}/{\log\log h}\)\)$
possible values for $y_1,\ldots,y_{\nu}$. Once the variables
$y_1,\ldots,y_{\nu}$ are fixed,
by Lemma~\ref{lem:Div ANF} we see that there are
$\exp\(O\({\log h}/{\log\log h}\)\)$
possibilities for
$x_2,\ldots,x_{\nu}$ (while $x_1$ is fixed by $x_1=v$). Thus,
$$N_v\le h^{\nu-d}\exp\(O\(\frac{\log h}{\log\log h}\)\), \qquad v=1,\ldots,h,
$$
and using~\eqref{N=sum_N_v} completes the proof of the lemma.
\end{proof}

We say that a solution $ x_1,\ldots,x_{\nu}, y_1,\ldots,y_{\nu}$ to the equation~\eqref{eq:eq x,y}
is {\it trivial\/} if $(y_1,\ldots,y_\nu)$ is a permutation
of $(x_1,\ldots,x_\nu)$.

\begin{theorem}
\label{thm:IrratProdSet s} For every $\nu\ge d\ge2$, $h\ge3$ and
algebraic number $\sigma$ of degree $d$ the number
$M_\nu(h,\sigma)$ of nontrivial solutions of~\eqref{eq:eq x,y}
satisfies the inequality
$$M_\nu(h,\sigma)\le  h^{\nu-d+1}\exp\(O\(\frac{\log h}{\log\log h}\)\).$$
\end{theorem}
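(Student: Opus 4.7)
The plan is to reduce any nontrivial solution of~\eqref{eq:eq x,y} to a solution with fewer variables that satisfies the disjointness condition~\eqref{eq:x neq y}, by canceling common factors on both sides, and then to apply Lemma~\ref{lem:fix_s2}.

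More precisely, given a nontrivial solution $(x_1,\ldots,x_\nu,y_1,\ldots,y_\nu)$, I would view $X=\{x_1,\ldots,x_\nu\}$ and $Y=\{y_1,\ldots,y_\nu\}$ as multisets and let $C=X\cap Y$ be their multiset intersection, of cardinality $k$. The multisets $X'=X\setminus C$ and $Y'=Y\setminus C$ then each have size $\nu'=\nu-k\ge 1$ and share no element, so after canceling the matched factors on both sides of~\eqref{eq:eq x,y} one obtains a solution of the same equation with $\nu$ replaced by $\nu'$ that satisfies~\eqref{eq:x neq y}. By the observation preceding Lemma~\ref{lem:fix_s2}, any disjoint solution with $\nu'<d$ must be trivial, hence empty, so in fact $\nu'\ge d$.

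To obtain the bound on $M_\nu(h,\sigma)$, I would encode each nontrivial solution (with bounded overcounting) by: choosing $k\in\{0,1,\ldots,\nu-d\}$; choosing the $k$-subsets $I,J\subseteq\{1,\ldots,\nu\}$ of cancelled positions ($\binom{\nu}{k}^2$ ways); choosing the matching bijection $\pi\colon I\to J$ ($k!$ ways); choosing the ordered tuple of common values $(x_i)_{i\in I}\in\{1,\ldots,h\}^k$ ($h^k$ ways, with the constraint $y_{\pi(i)}=x_i$); and choosing the reduced tuple $((x_i)_{i\notin I},(y_j)_{j\notin J})$, which is a solution of~\eqref{eq:eq x,y} with $\nu$ replaced by $\nu-k$ satisfying~\eqref{eq:x neq y}, counted by $N_{\nu-k}(h,\sigma)$. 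Lemma~\ref{lem:fix_s2} then gives $N_{\nu-k}(h,\sigma)\le h^{\nu-k-d+1}\exp(O(\log h/\log\log h))$ (using $\nu-k\ge d\ge 2$), and summing
\begin{align*}
M_\nu(h,\sigma)&\le\sum_{k=0}^{\nu-d}\binom{\nu}{k}^2 k!\, h^k\cdot h^{\nu-k-d+1}\exp\(O\(\frac{\log h}{\log\log h}\)\)\\
&\le h^{\nu-d+1}\exp\(O\(\frac{\log h}{\log\log h}\)\)
\end{align*}
produces the required bound. The main thing to verify carefully is the combinatorial reduction, in particular that each nontrivial solution is indeed captured by this encoding and that the reduced tuple satisfies~\eqref{eq:x neq y}; all the analytic and arithmetic input is already packaged in Lemma~\ref{lem:fix_s2}.
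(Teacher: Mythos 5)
Your proof is correct and follows essentially the same strategy as the paper: reduce to disjoint solutions by cancelling common factors and then invoke Lemma~\ref{lem:fix_s2}. The only difference is organizational --- the paper runs an induction on $\nu$, cancelling one common pair at a time and bounding the non-disjoint solutions by $(2h\nu)$ times the inductive hypothesis, while you unroll this induction by cancelling the entire common multiset in a single step and summing over its size $k$.
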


\begin{proof} We  use induction on $\nu\ge d$.
For $\nu=d$ all solutions are trivial, and there is nothing to prove.
We verify the assertion for $\nu>d$ assuming that it holds for
$\nu-1$. Using induction hypothesis we conclude that the number
of nontrivial solutions of~\eqref{eq:eq x,y} such that
condition~\eqref{eq:x neq y} does not hold is
bounded by
\begin{equation*}
\begin{split}
(2h\nu)\cdot h^{\nu-1-d+1} \exp\(O\(\frac{\log h}{\log\log h}\)\)\\
=  h^{\nu-d+1}& \exp\(O\(\frac{\log h}{\log\log h}\)\).
\end{split}
\end{equation*}

It suffices to add the number of solutions of~\eqref{eq:eq x,y}
satisfying~\eqref{eq:x neq y}. Using Lemma~\ref{lem:fix_s2}
we complete the proof.
\end{proof}

Since the number of trivial solutions of~\eqref{eq:eq x,y}
is $\nu! h^{\nu} + O\(h^{\nu-1}\)$, we get the following:

\begin{cor}
\label{cor:IrratProdSet s}
For every $\nu\ge1$, $h\ge3$ and irrational number $\sigma$
of degree $d$ we have the asymptotic formula
$$
K_\nu(h,\sigma) =  \nu! h^{\nu} + O\(h^{\nu-1}
\exp\(O\(\frac{\log h}{\log\log h}\)\)\),
$$
where the implicit constants depend only on $\nu$.
\end{cor}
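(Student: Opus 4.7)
The plan is to split $K_\nu(h,\sigma)$ into the contribution of \emph{trivial} solutions (those where $(y_1,\ldots,y_\nu)$ is a permutation of $(x_1,\ldots,x_\nu)$) and the contribution $M_\nu(h,\sigma)$ of nontrivial solutions, and to handle each separately.

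Counting the trivial solutions is an elementary combinatorial exercise. If $(x_1,\ldots,x_\nu)\in\{1,\ldots,h\}^\nu$ has pairwise distinct entries, then exactly $\nu!$ tuples $(y_1,\ldots,y_\nu)$ are permutations of it; if some entries of $x$ coincide the number of valid $y$ is at most $\nu!$. The number of $x$ with pairwise distinct coordinates is $h(h-1)\cdots(h-\nu+1)=h^\nu+O(h^{\nu-1})$ and the number of $x$ with a repeated coordinate is $O(h^{\nu-1})$, so the trivial contribution equals $\nu!\,h^\nu + O(h^{\nu-1})$.

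For the nontrivial solutions I distinguish two cases according to the degree of $\sigma$. If $\sigma$ is transcendental, or algebraic of degree at least $\nu$, consider the polynomial
$$G(X)=(x_1+X)\cdots(x_\nu+X)-(y_1+X)\cdots(y_\nu+X)\in\Z[X].$$
The leading terms $X^\nu$ cancel, so $\deg G\le\nu-1$; since $G(\sigma)=0$ and $\sigma$ has algebraic degree $\ge\nu$ (interpreting transcendental as degree $\infty$), we must have $G\equiv0$. Unique factorization in $\Q[X]$ then forces $\{x_1,\ldots,x_\nu\}=\{y_1,\ldots,y_\nu\}$ as multisets, so every solution is trivial and $M_\nu(h,\sigma)=0$. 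Otherwise $\sigma$ is algebraic of some degree $d$ with $2\le d\le\nu$ (the case $d=1$ is excluded by irrationality), and Theorem~\ref{thm:IrratProdSet s} applies to give
$$M_\nu(h,\sigma)\le h^{\nu-d+1}\exp\!\left(O\!\left(\frac{\log h}{\log\log h}\right)\right)\le h^{\nu-1}\exp\!\left(O\!\left(\frac{\log h}{\log\log h}\right)\right),$$
where the last inequality uses $d\ge2$.

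Adding the two contributions yields the asserted asymptotic formula. There is no real obstacle here: the corollary is a direct bookkeeping consequence of Theorem~\ref{thm:IrratProdSet s} once one checks that the $d\ge\nu$ (and transcendental) case produces no nontrivial solutions via the degree argument on $G(X)$, and that the trivial solutions account for the main term $\nu!\,h^\nu$.
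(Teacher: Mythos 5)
Your proof is correct and follows essentially the same route as the paper: count the trivial (permutation) solutions as $\nu!\,h^\nu + O(h^{\nu-1})$, observe that for transcendental $\sigma$ or algebraic $\sigma$ of degree $\ge\nu$ the degree bound $\deg G\le\nu-1$ forces $G\equiv0$ and hence only trivial solutions, and for $2\le d<\nu$ invoke Theorem~\ref{thm:IrratProdSet s} to bound the nontrivial contribution by $h^{\nu-d+1}\exp(O(\log h/\log\log h))\le h^{\nu-1}\exp(O(\log h/\log\log h))$.
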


\begin{rem} It is certainly interesting to find best possible
value of the exponent of $h$ in Theorem~\ref{thm:IrratProdSet s}.
\end{rem}

\begin{rem}
It is also interesting to understand  for which $d$ and $\nu$
there are nontrivial solutions of~\eqref{eq:eq x,y} for every
algebraic number $\sigma$ of degree $d$ and a sufficiently large
$h$.
\end{rem}

\begin{rem} One can try to get   more precise forms of Theorem~\ref{thm:IrratProdSet s}
and Lemma~\ref{lem:fix_s2}
depending on the coefficients of the minimal
polynomial $P$ for $\sigma$ (similarly to the estimates in  Lemma~\ref{lem:fix_s,x}).
Such an improvement is of independent interest, however
does not help the main goal of  this work.
\end{rem}

\section{Multiplicative Congruences and Equations for Almost all Parameters}

\subsection{Bounds on the number of solutions of multiplicative congruences for almost all $p$}

\begin{theorem}
\label{thm:GProdSet AlmostAll} Let $\nu \ge 1$ be a fixed integer.
Then for a sufficiently large positive integer  $T$, $h\ge3$,
for all but $o(T/\log^2 T)$ primes $p \le T$, if $3\le h<T$
then for any  $s \in \F_p$, we have the bound
$$
K_{\nu}(p,h,s) \le \(h^\nu + h^{2\nu-1/2}T^{-1/2}\)
\exp\(O\(\frac{\log h}{\log\log h}\)\).
$$
\end{theorem}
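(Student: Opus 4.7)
The plan is induction on $\nu$. The base case $\nu=1$ is immediate: $x+s\equiv y+s\pmod p$ with $x,y\in[1,h]\subset[1,p-1]$ forces $x=y$, so $K_1(p,h,s)\le h$. For the inductive step, assume the theorem at level $\nu-1$ and write $K_\nu(p,h,s)=K^*+K^{\rm disj}$, where $K^*$ counts solutions in which $\{x_1,\ldots,x_\nu\}\cap\{y_1,\ldots,y_\nu\}\neq\emptyset$ and $K^{\rm disj}$ counts solutions satisfying~\eqref{eq:x neq y}. For $K^*$, fixing a pair $(i,j)$ with $x_i=y_j$ and cancelling $(x_i+s)$ from both sides of~\eqref{eq:cong x,y} produces a $K_{\nu-1}$ instance in the remaining variables, so $K^*\ll h\cdot K_{\nu-1}(p,h,s)$; by the inductive hypothesis this is at most $(h^\nu+h^{2\nu-3/2}T^{-1/2})\exp(O(\log h/\log\log h))$, which lies within the target bound. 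Consequently it suffices to bound $K^{\rm disj}$.

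Suppose $K^{\rm disj}\ge N$. Provided $h<\tfrac12 p^{1/\nu}$, Lemma~\ref{lem:CommonSols1} produces at least
$$
M\gg Nh^{-1}\exp\!\bigl(-C\log h/\log\log h\bigr)
$$
distinct polynomials $R(Z)=A_1Z^{\nu-1}+\ldots+A_\nu\in\Z[Z]$ with $|A_i|\le\binom{\nu}{i}h^i$ and $R(s)\equiv 0\pmod p$. These are lattice points of the index-$p$ sublattice
$$
\Lambda_{p,s}=\{\vec A\in\Z^\nu:A_1s^{\nu-1}+\ldots+A_\nu\equiv 0\pmod p\}
$$
inside the symmetric convex box $B=\prod_{i=1}^\nu[-\binom{\nu}{i}h^i,\binom{\nu}{i}h^i]$ of volume $\asymp h^{\nu(\nu+1)/2}$. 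The complementary regime $h\ge\tfrac12 p^{1/\nu}$ involves only primes $p\le 2^\nu h^\nu$; for those primes I would appeal to the previous bound from~\cite{BGKS2} together with standard divisor-type estimates to verify the target bound directly.

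Combining Lemma~\ref{lem:latpoints} with Minkowski's second theorem applied to the successive minima $\lambda_1\le\ldots\le\lambda_\nu$ of $(B,\Lambda_{p,s})$, I extract two $\Z$-linearly independent polynomials $R_1,R_2\in\lambda_2 B\cap\Lambda_{p,s}$ with the scale $\lambda_2$ controlled by $M$. If $R_1,R_2$ are coprime in $\Q[Z]$ then $\Res(R_1,R_2)$ is a nonzero integer divisible by $p$, and Lemma~\ref{lem:Res} (with $\rho=\vartheta=1+\log_h\lambda_2\ge-\tfrac12$) yields $|\Res(R_1,R_2)|\ll h^{\nu^2-1}\lambda_2^{2\nu-2}$, so $p$ divides an explicit integer of this controlled size. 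Summing this divisibility constraint over all admissible short pairs $(R_1,R_2)$ and using that each nonzero resultant has $O(\log h/\log\log h)$ prime divisors should bound the set of exceptional primes by $o(T/\log^2 T)$ whenever $N>h^{2\nu-1/2}T^{-1/2}\exp(C'\log h/\log\log h)$. When $R_1,R_2$ share a polynomial factor, I pass to the common factor via Corollary~\ref{cor:twoPol} and Lemma~\ref{lem:Div ANF}, iterating the reduction until a quadratic congruence emerges, at which point Lemma~\ref{lem:CommonSols2} pins $s\equiv t/r\pmod p$ with very small $|r|,|t|$ and such rational $s$ yield only few primes. The main obstacle is calibrating $\lambda_2$ against the resultant estimate so that the sharp exponent $2\nu-1/2$ emerges precisely, uniformly handling the coprime and common-factor alternatives, and keeping the cumulative exceptional set within $o(T/\log^2 T)$.
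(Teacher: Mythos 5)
Your decomposition into overlapping vs.\ disjoint solutions and the induction on $\nu$ match the paper's, but the core of the argument — how you bound $K^{\mathrm{disj}}$ and convert this into a statement about exceptional \emph{primes} — diverges significantly and contains real gaps.

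The main conceptual problem is that the geometry-of-numbers route you sketch does not naturally produce a bound on the set of bad primes. The lattice $\Lambda_{p,s}$, its successive minima $\lambda_i$, and the pair $(R_1,R_2)$ you extract all depend on the particular bad pair $(p,s)$. To sum the divisibility constraint $p\mid\Res(R_1,R_2)$ over "admissible short pairs," you would have to control the total number of pairs $(R_1,R_2)$ with the relevant coefficient bounds, and this is essentially $h^{O(\nu^2)}$, which is far larger than the target $o(T/\log^2 T)$ unless $\lambda_2$ is very small — but $\lambda_2$ is only controlled through $M$, which is what you are trying to estimate. Moreover, your fallback in the common-factor case is misdirected: passing to Lemma~\ref{lem:CommonSols2} to get $rs\equiv t\pmod p$ with small $|r|,|t|$ constrains the residue $s$ inside a \emph{fixed} $\F_p$, not the prime $p$; every prime admits such residues, so this yields no bound whatsoever on $\#\{p\le T: \text{some bad } s\}$. (Also, iterating to "a quadratic" only makes sense for $\nu=3$; for general $\nu$ there is no reason the reduction lands in the scope of Lemma~\ref{lem:CommonSols2}.) Finally, you invoke Lemma~\ref{lem:CommonSols1} which needs $h<\tfrac12 p^{1/\nu}$, a condition that is not available across the full range $3\le h<T$.

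The paper's proof takes a different route that sidesteps all of this. It never fixes $s$ before fixing $p$. Instead, it counts, across \emph{all} quadruples of polynomials $(P_1,Q_1,P_2,Q_2)$ with coefficients from $[1,2h]$, the number of (prime, quadruple) incidences with $\Res(R_1,R_2)\ne 0$ but $p\mid\Res(R_1,R_2)$; since each nonzero resultant has $O(\log h)$ prime divisors and there are $O(h^{4\nu})$ quadruples, only $o(T/\log^2 T)$ primes can be involved in too many incidences. Then, for a fixed good $p$ and a fixed $s$, a shift argument (replacing $Z$ by $Z+t$, $t=1,\ldots,h$, and noting multiplicity at most $\nu$) converts the global count into a per-$s$ bound on the number of "nonzero resultant" pairs. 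By pigeonhole there is a fixed $(P_1,Q_1)$ such that almost all $(P_2,Q_2)\in\cP$ satisfy $\Res(R_1,R_2)=0$; hence almost all $R_2$ share a root $\beta$ with $R_1$, of bounded logarithmic height (Corollary~\ref{cor:twoPol}) and degree $\le\nu$. The bound then comes from the divisor estimate in algebraic number fields, Lemma~\ref{lem:Div ANF}, applied to the equation over $\C$ with $\sigma=\beta$ — a tool entirely absent from your outline, and the one that actually closes the argument. To make progress you would need to replace your geometry-of-numbers step with this shift-and-count scheme, and replace the Lemma~\ref{lem:CommonSols2} detour with the algebraic-number divisor bound.
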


\begin{proof} We note that for $\nu=1$ the result is trivial and we prove
it for $\nu\ge2$ by induction on $\nu$.

Let
$$
H_\nu  = T^{1/(2\nu-1)} (\log T)^{-5/(2\nu-1)}.
$$

We consider the quadruples of polynomials $(P_1,Q_1,P_2,Q_2)$ with
\begin{equation}
\label{eq:formP,Q} P_i(Z)=(x_{1,i}+Z)\ldots(x_{\nu,i}+Z),\quad Q_i(Z)
=(y_{1,i}+Z)\ldots(y_{\nu,i}+Z),
\end{equation}
and
$$1\le x_{1,i},\ldots, x_{\nu,i}, y_{1,i},\ldots,y_{\nu,i}\le 2h,
$$
for $i=1,2$.
Denote
$$R_1=P_1-Q_1,\qquad R_2=P_2-Q_2.$$

Clearly, we have $|\Res(R_1,R_2)| = h^{O(1)}$. If $h\le H_\nu$, then there
are at most $O(H_\nu^{2\nu-1} \log H_\nu) = o(T/\log^2 T)$ primes $p$ that divide
$\Res(R_1,R_2)$ with $|\Res(R_1,R_2)|\ne 0$ for at least
$h^{2\nu+1}$ quadruples $(P_1,Q_1,P_2,Q_2)$ (since each quadruples
may correspond to at most $O(\log  H_\nu)$ distinct primes
and we have $O(h^{4\nu}$) distinct quadruples). If $h> H_\nu$, then there
are at most $o(T/\log^2 T)$ primes $p$ that divide
$\Res(R_1,R_2)$ with $|\Res(R_1,R_2)|\ne 0$ for at least
$h^{4\nu}\log^4T/T$ quadruples $(P_1,Q_1,P_2,Q_2)$. Also, by the induction
hypothesis, there are $o(T/\log^2 T)$ primes $p$ not satisfying the
condition
\begin{equation}
\label{Knu-1}
K_{\nu-1}(p,h,s) \le \(h^{\nu-1} + h^{2\nu-5/2}T^{-1/2}\)
\exp\(O\(\frac{\log h}{\log\log h}\)\).
\end{equation}

We now fix one of the remaining primes $p\le T$ and estimate, for
any $s\in\F_p$, the cardinality $N$ of the set $\cQ$ of quadruples
of polynomials $(P_1,Q_1,P_2,Q_2)$ such that for $i=1,2$ the
polynomials $P=P_i, Q=Q_i$   are of the form~\eqref{eq:formP,Q} with
$x_{1,i},\ldots, x_{\nu,i}, y_{1,i},\ldots,y_{\nu,i}$ that
satisfy~\eqref{eq:cong x,y} and~\eqref{eq:x neq y},
and, moreover, $\Res(R_1,R_2)\neq0$. However,
$\Res(R_1,R_2)\equiv0\pmod p$. For any such quadruple, for any
$t=1,\ldots,h$, and for $i=1,2$ we consider polynomials
$$P_{i,t}(Z)=P_i(Z+t),\quad Q_{i,t}(Z)=Q_i(Z+t),\quad R_{i,t}(Z)=R_i(Z+t).$$
We get $Nh$ quadruples $(P_{1,t},Q_{1,t},P_{2,t},Q_{2,t})$. They are
not necessarily distinct, but the multiplicity of any quadruple is
at most $\nu$ since $R_{i,t}(s-t)\equiv0\pmod p$ and any polynomial
$R_i$ cannot coincide with $R_{i,t}$ for more than $\nu$ distinct
$t$. Thus, for $h\le H_\nu$ we have $Nh/\nu < h^{2\nu+1}$, or
\begin{equation}
\label{eq:estN_small_h} N<\nu h^{2\nu}.
\end{equation}
For $h> H_\nu$ we have $Nh/\nu <h^{4\nu}\log^4T/T$, or
\begin{equation}
\label{eq:estN_large_h} N<\nu h^{4\nu-1}T^{-1}\log^4T .
\end{equation}
Let $M$ be the cardinality of the set $\cP$ of solutions
of~\eqref{eq:cong x,y} satisfying~\eqref{eq:x neq y}. We assume that
$M\ge \nu (h^\nu + h^{2\nu-1/2}T^{-1/2}\log^2T)$. By the Dirichlet pigeon-hole
principle, there exists
$(P_1,Q_1)\in\cP$ such that the number of pairs $(P_2,Q_2)\in\cP$
with $(P_1,Q_1,P_2,Q_2)\in\cQ$ is at most
$h^{2\nu-1/2}T^{-1/2}\log^2T$. Therefore,
the number
$M_0$ of pairs $(R_1,R_2)\in\cP$ with $\Res(R_1,R_2)=0$ satisfies
$M_0 \ge M-h^{2\nu-1/2}\log^2T\ge M/2$.
Thus,  by
Corollary~\ref{cor:twoPol},
we find an algebraic number $\beta$ of
logarithmic height $O(\log h)$  in an extension $\K$ of $\Q$ of
degree $[\K:\Q] \le \nu$ such that the equation
\begin{equation}
\label{eq:ConcentrationProdAlgebraic}
(x_1+\beta)\ldots
(x_{\nu}+\beta)= (y_1+\beta)\ldots (y_{\nu}+\beta)\neq0,
\end{equation}
where
$$1\le x_i,y_i\le h  \qquad i=1, \ldots, \nu,
$$
has at least $M_0/\nu$ solutions.
 Now we have that
$$\beta =\frac{\alpha}{q},
$$
where $\alpha$ is an algebraic integer of height at most $O(\log h)$
and $q$ is a positive integer $q\ll h^{\nu}$, see~\cite{Nar}. From
the basic properties of algebraic numbers it now follows that the
numbers
$$qx_i+\alpha \mand qy_i+\alpha, \qquad i =1, \ldots, \nu,
$$
are algebraic integers of $\K$ of height at most $O(\log h)$.

Using  Lemma~\ref{lem:Div ANF},
we conclude that for a sufficiently large $h$ the
equation~\eqref{eq:ConcentrationProdAlgebraic}
has at most
$$h^\nu \exp\(O\(\frac{\log h}{\log\log h}\)\)$$
solutions. Therefore, the same estimate holds for the number of solutions
of~\eqref{eq:cong x,y} satisfying~\eqref{eq:x neq y}. By~\eqref{Knu-1} we have a similar estimate
for the number of solutions of~\eqref{eq:cong x,y} not satisfying~\eqref{eq:x neq y}. This
completes the proof of the theorem.
\end{proof}

Taking the sum over $h=3\times2^j$, $j\ge0$, we get the same exceptional
set for all $h$.

\begin{cor}
\label{cor:GProdSet AlmostAll} Let $\nu \ge 1$ be a fixed integer.
Then for a sufficiently large positive integer  $T$,
for all but $o(\pi(T))$ primes $p \le T$, for any $3\le h<T$
and for any $s \in \F_p$ we have the bound
$$
K_{\nu}(p,h,s) \le \(h^\nu + h^{2\nu-1/2}T^{-1/2}\)
\exp\(O\(\frac{\log h}{\log\log h}\)\).
$$
\end{cor}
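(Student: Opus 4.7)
The plan is to perform a dyadic decomposition in $h$ and take the union of the exceptional prime sets produced by Theorem~\ref{thm:GProdSet AlmostAll} at each scale, as suggested in the preceding paragraph. I would set $h_j = 3 \cdot 2^j$ for $j = 0, 1, \ldots, J$, where $J = O(\log T)$ is chosen so that every $h$ with $3 \le h < T$ is bracketed by $h_j/2 \le h \le h_j$ for some $j \le J$. Since the largest such dyadic scale may slightly exceed $T$, I would apply Theorem~\ref{thm:GProdSet AlmostAll} with the underlying parameter replaced by $T' = 2T$; this changes the bound only by an absolute constant and keeps the exceptional cardinality of order $o(T'/\log^2 T') = o(T/\log^2 T)$.

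For each $j \le J$, Theorem~\ref{thm:GProdSet AlmostAll} furnishes an exceptional set $\cE_j$ of primes $p \le 2T$, of cardinality $o(T/\log^2 T)$, outside of which the bound holds at the fixed scale $h_j$ uniformly in $s \in \F_p$. Setting $\cE = \bigcup_{j=0}^{J} \cE_j$ and restricting to primes $p \le T$ yields, by the prime number theorem,
$$|\cE| \le (J+1) \cdot o(T/\log^2 T) = O(\log T) \cdot o(T/\log^2 T) = o(T/\log T) = o(\pi(T)).$$
For any prime $p \le T$ with $p \notin \cE$, any $s \in \F_p$, and any $3 \le h < T$, I would select the $j$ satisfying $h_j/2 \le h \le h_j$ and invoke the trivial monotonicity $K_\nu(p, h, s) \le K_\nu(p, h_j, s)$, since enlarging the range of the variables can only add solutions. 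Applying Theorem~\ref{thm:GProdSet AlmostAll} at scale $h_j$ and using $h_j \le 2h$ together with the estimate $\log h_j / \log\log h_j = \log h / \log\log h + O(1)$, the resulting absolute constants are absorbed into the $\exp(O(\log h/\log\log h))$ factor, producing the claimed inequality.

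There is no substantive obstacle: the argument is formal once Theorem~\ref{thm:GProdSet AlmostAll} is in hand, and the two minor bookkeeping points to keep track of are the logarithmic cost of the dyadic union (comfortably covered by the $\log^2 T$ in the denominator of the Theorem's exceptional bound) and the need to allow $h_j$ to lie slightly above $T$ (handled by replacing $T$ with $2T$ in the Theorem's application).
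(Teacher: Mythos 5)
Your proposal is correct and takes exactly the route the paper intends: the paper disposes of the corollary with the single remark ``Taking the sum over $h=3\times 2^j$, $j\ge 0$, we get the same exceptional set for all $h$,'' and your write-up simply fills in the standard bookkeeping (monotonicity of $K_\nu$ in $h$, absorption of the factor $2$ into the $\exp(O(\log h/\log\log h))$ term, and the union of $O(\log T)$ exceptional sets each of size $o(T/\log^2 T)$ giving $o(T/\log T)=o(\pi(T))$).
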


Clearly for $h = O(T^{1/(2\nu-1)})$ the first term in Theorem~\ref{thm:GProdSet AlmostAll}
and Corollary~\ref{cor:GProdSet AlmostAll} dominates and both bounds take an almost optimal
form
$$
K_{\nu}(p,h,s) \le h^\nu \exp\(O\(\frac{\log h}{\log\log h}\)\).
$$

For a set $\cA \subseteq \F_p$ we denote
$$
\cA^{(\nu)} = \{a_1 \ldots a_\nu~:~a_1, \ldots, a_\nu \in \cA\}.
$$

\begin{cor}
\label{cor:ProdSet Fp AlmostAll}  Let $\nu \ge 1$ be a fixed integer.
Then for a sufficiently large positive integer  $T$,
for all but $o(\pi(T)$ primes $p \le T$, if $3\le h<T$
then for any  $s \in \F_p$, for the set
$$
\cA = \left\{ x+s ~:~ 1\le x\le  h \right\} \subseteq \F_p
$$
we have
$$
\# \cA^{(\nu)} \ge\min\(h^{\nu},(hT)^{1/2}\)
\exp\(O\(\frac{\log h  }{ \log \log h }\)\).
$$
\end{cor}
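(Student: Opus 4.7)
The plan is a standard Cauchy--Schwarz application, using Corollary~\ref{cor:GProdSet AlmostAll} as a black box to control the ``energy'' on the right-hand side. Work with the exceptional set of primes from Corollary~\ref{cor:GProdSet AlmostAll}, so that for any $s\in\F_p$ and any $3\le h<T$ the bound on $K_\nu(p,h,s)$ is at our disposal.

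First, let $r(z)$ denote the number of $\nu$-tuples $(x_1,\dots,x_\nu)$ with $1\le x_i\le h$ and $(x_1+s)\ldots(x_\nu+s)\equiv z\pmod p$, and let $\cB=\cA^{(\nu)}\setminus\{0\}$. If $-s\not\equiv j\pmod p$ for all $1\le j\le h$, then $\sum_{z\in\cB} r(z)=h^\nu$; otherwise one element of $\cA$ vanishes mod $p$ and $\sum_{z\in\cB}r(z)\ge (h-1)^\nu$. In either case $\sum_{z\in\cB}r(z)\gg h^\nu$, while by definition
$$
\sum_{z\in\cB}r(z)^2=K_\nu(p,h,s).
$$
Applying the Cauchy--Schwarz inequality gives
$$
h^{2\nu}\ll \Bigl(\sum_{z\in\cB}r(z)\Bigr)^{2}\le \#\cB\cdot\sum_{z\in\cB}r(z)^2=\#\cB\cdot K_\nu(p,h,s),
$$
so that $\#\cA^{(\nu)}\ge \#\cB\gg h^{2\nu}/K_\nu(p,h,s)$.

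Now I would invoke Corollary~\ref{cor:GProdSet AlmostAll}, which for all but $o(\pi(T))$ primes $p\le T$ and every choice of $s\in\F_p$ and $3\le h<T$ yields
$$
K_\nu(p,h,s)\le\bigl(h^\nu+h^{2\nu-1/2}T^{-1/2}\bigr)\exp\!\Bigl(O\!\Bigl(\tfrac{\log h}{\log\log h}\Bigr)\Bigr).
$$
Dividing, and splitting into the two cases according to which term in the sum dominates, we obtain
$$
\#\cA^{(\nu)}\gg \frac{h^{2\nu}}{h^\nu+h^{2\nu-1/2}T^{-1/2}}\exp\!\Bigl({-}O\!\Bigl(\tfrac{\log h}{\log\log h}\Bigr)\Bigr).
$$
When $h^\nu$ dominates (equivalently $h\le T^{1/(2\nu-1)}$), this gives $\#\cA^{(\nu)}\gg h^\nu$ up to the exponential factor; otherwise the second term dominates and one computes $h^{2\nu}/(h^{2\nu-1/2}T^{-1/2})=(hT)^{1/2}$. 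Taking the minimum of the two estimates and absorbing the implied multiplicative constant into the $\exp(O(\cdot))$ factor (whose $O$-constant is allowed to be negative) yields precisely the claimed lower bound.

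There is no real obstacle here: the entire content is packaged in Corollary~\ref{cor:GProdSet AlmostAll}. The only small care point is the possibility that $-s\bmod p$ falls in $\{1,\dots,h\}$, which merely costs a harmless $(h-1)/h$ factor absorbed by the $\exp(O(\log h/\log\log h))$ term.
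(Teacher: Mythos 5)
Your Cauchy--Schwarz argument is correct and is exactly the standard (and intended) route from Corollary~\ref{cor:GProdSet AlmostAll} to the lower bound on $\#\cA^{(\nu)}$; the paper leaves this corollary without an explicit proof, but the same $\ell^2$ argument appears verbatim in the proof of Theorem~\ref{thm:Order Fp}. Your treatment of the possible zero element of $\cA$ and of the sign convention in the $\exp(O(\cdot))$ factor is careful and accurate.
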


\subsection{Asymptotic formula for the number of solutions of multiplicative
equations for almost all $\sigma \in \C$}

\begin{theorem} \label{thm:RatProdSet s}
For every $\nu\ge1$, $h\ge3$ and $\varepsilon$ with $2\ge \varepsilon > 0$
for all but $O(h^{1+\varepsilon})$ values of $\sigma \in \C$ we have
the asymptotic formula
$$
K_\nu(h,\sigma) =  \nu! h^{\nu} + O\(h^{\nu-\varepsilon/2}
\exp\(O\(\frac{\log h}{\log\log h}\)\)\).
$$
\end{theorem}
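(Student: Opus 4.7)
The plan is to split $K_\nu(h,\sigma) = T_\nu(h) + M_\nu(\sigma)$ where $T_\nu(h)$ counts the trivial solutions (those in which $(y_1,\ldots,y_\nu)$ is a permutation of $(x_1,\ldots,x_\nu)$) and $M_\nu(\sigma)$ counts the rest. A direct combinatorial computation gives $T_\nu(h) = \nu! h^\nu + O(h^{\nu-1})$, with at most a further $O(h^{\nu-1})$ adjustment when $\sigma \in \{-1,\ldots,-h\}$ forces the exclusion of tuples with a vanishing factor. Since $h^{\nu-1} \le h^{\nu-\varepsilon/2}$ whenever $\varepsilon \le 2$, this discrepancy is absorbed into the desired error term, and the task reduces to showing that
\begin{equation*}
M_\nu(\sigma) \le h^{\nu-\varepsilon/2} \exp\!\left(O\!\left(\frac{\log h}{\log\log h}\right)\right)
\end{equation*}
outside an exceptional set of $\sigma$ of cardinality $O(h^{1+\varepsilon})$.

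Every nontrivial solution of~\eqref{eq:eq x,y} makes $\sigma$ a root of the nonzero integer polynomial $R(Z) = \prod_{i=1}^\nu(x_i+Z) - \prod_{i=1}^\nu(y_i+Z)$, which has degree at most $\nu-1$. Hence $M_\nu(\sigma) = 0$ for every transcendental $\sigma$. For algebraic $\sigma$ of degree $d \ge 2$, Theorem~\ref{thm:IrratProdSet s} gives the unconditional bound $M_\nu(\sigma) \le h^{\nu-1}\exp(O(\log h/\log\log h))$, which already meets the target when $\varepsilon \le 2$. Consequently, any exceptional $\sigma$ must be rational.

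The main step is then to prove, by induction on $\nu$, that for every rational $\sigma = t/r$ in lowest terms with $r \ge 1$,
\begin{equation*}
M_\nu(t/r) \ll \frac{h^{\nu+1}}{\max\{hr,|t|\}} \exp\!\left(O\!\left(\frac{\log h}{\log\log h}\right)\right).
\end{equation*}
The base case $\nu=1$ is vacuous because $M_1 \equiv 0$. For $\nu \ge 2$, I split nontrivial solutions into those satisfying the disjointness condition~\eqref{eq:x neq y}, which Lemma~\ref{lem:fix_s} bounds by $B$, say, and those with some overlap $x_i = y_j = z$. In the overlap case, fix $(i,j)$ (at most $\nu^2$ choices) and $z$ (at most $h$ choices); cancelling the common factor produces a nontrivial $(\nu-1)$-variable solution for the same $\sigma$, the key observation being that a trivial reduced solution would, upon restoring $z$ on both sides, force the original $(y_1,\ldots,y_\nu)$ to be a permutation of $(x_1,\ldots,x_\nu)$. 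The recursion $M_\nu(\sigma) \ll B + \nu^2 h\, M_{\nu-1}(\sigma)$ then closes the induction, since the inductive hypothesis gains an extra factor of $h$ in the denominator.

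To conclude, a rational $\sigma = t/r$ can be exceptional only when $\max\{hr,|t|\} \ll h^{1+\varepsilon/2}$, forcing $r \ll h^{\varepsilon/2}$ and $|t| \ll h^{1+\varepsilon/2}$. Summing gives at most $O(h^{\varepsilon/2}) \cdot O(h^{1+\varepsilon/2}) = O(h^{1+\varepsilon})$ exceptional pairs $(t,r)$, as required. The chief subtlety is the overlap step of the induction: verifying nontriviality of the reduced solution and checking that the recursive factor $\nu^2 h$ does not swamp the gain in $\max\{hr,|t|\}$ in the denominator. Once this bookkeeping is in place, the result follows from combining Lemma~\ref{lem:fix_s}, Theorem~\ref{thm:IrratProdSet s}, and an elementary count of rationals of bounded height.
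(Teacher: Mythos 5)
Your proposal is correct and follows essentially the same route as the paper: handle irrational $\sigma$ via Theorem~\ref{thm:IrratProdSet s}, and for rational $\sigma=t/r$ run an induction on $\nu$ that bounds disjoint nontrivial solutions via Lemma~\ref{lem:fix_s} and overlapping ones by the recursion $M_\nu \ll B + \nu^2 h\, M_{\nu-1}$. The only (harmless) organizational difference is that you establish the uniform bound $M_\nu(t/r) \ll h^{\nu+1}/\max\{hr,|t|\}\cdot\exp(O(\log h/\log\log h))$ for all reduced fractions and then observe that $\max\{hr,|t|\}\le h^{1+\varepsilon/2}$ cuts out only $O(h^{1+\varepsilon})$ rationals, whereas the paper defines the exceptional set $\cS$ at the outset and carries the hypothesis $s\notin\cS$ through the same induction to get $M_\nu(s)\ll h^{\nu-\varepsilon/2}\exp(\cdot)$ directly; the two phrasings are equivalent.
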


\begin{proof}
 By Corollary~\ref{cor:IrratProdSet s}, we have the desired estimate for
irrational $\sigma$, so it is enough to consider only rational $\sigma$.

We denote
$$\cS=\{s=t/r~:~r, t\in\Z,\ |t|\le h^{1+\varepsilon/2},\  1\le
r\le h^{\varepsilon/2}\}.$$
Clearly, $\#\cS=O(h^{1+\varepsilon})$. It suffices
to prove the desired asymptotic formula for $s\in\Q\setminus\cS$.
We  follow the proof of Theorem~\ref{thm:IrratProdSet s}.
We say that the solution of~\eqref{eq:eq x,y} is {\it trivial\/} if
$(y_1,\ldots,y_\nu)$ is a permutation of $(x_1,\ldots,x_\nu)$.
The number of trivial solutions is
$$\nu! h^{\nu} +O(h^{\nu-1}).$$
Using induction on $\nu$, we prove the number of nontrivial solutions
is
$$
O\(h^{\nu-\varepsilon/2}
\exp\(C(\nu)\frac{\log h}{\log\log h}\)\),$$
where $C(k)$ depends only on $k$.
For $\nu=1$ all solutions are trivial. We   prove the assertion
for $\nu>1$ assuming that it holds for
$\nu-1$. Using induction hypothesis we conclude that the number
of nontrivial solutions of~\eqref{eq:eq x,y} such that
condition~\eqref{eq:x neq y} does not hold is bounded by
$$
O\(h^{\nu-\varepsilon/2}
\exp\(C(\nu-1)\frac{\log h}{\log\log h}\)\).
$$
To estimate the number $N$ of solutions of~\eqref{eq:eq x,y}
satisfying~\eqref{eq:x neq y} we use Lemma~\ref{lem:fix_s}. We write
$s=t/r$ where $t\in\Z$, $r\in\N$, $\gcd(r,t)=1$.
Since $s\not\in\cS$, we have $\max\{hr,|t|\}>h^{1+\varepsilon/2}$.
Hence, by Lemma~\ref{lem:fix_s},
$$N\le h^{\nu-\varepsilon/2}
\exp\(O\(\frac{\log h}{\log\log h}\)\),$$
as required.
\end{proof}

We now note that the bound of Theorem~\ref{thm:RatProdSet s}
on the size of exceptional set of $\sigma$
is quite precise even if  one restricts $\sigma$ to integer values.

\begin{theorem}
\label{thm:RatProdSet s LB}
For every $\nu\ge1$ and
$1/2>\varepsilon > 0$
for all integers $s$ with
$1 \le s \le h (\log h)^{1/2-\varepsilon}$ we have
$$
K_\nu(h,s)  \gg h^\nu (\log h)^{2\varepsilon}.
$$
\end{theorem}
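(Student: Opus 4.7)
My plan is to reduce to the case $\nu = 2$ and then lower-bound $K_2(h, s)$ by parametrizing the off-diagonal solutions via coprime pairs of integers.

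For the reduction (valid for $\nu \ge 2$): given any solution $(x_1, x_2, y_1, y_2)$ of the two-variable equation $(x_1+s)(x_2+s) = (y_1+s)(y_2+s)$ with $x_i, y_i \in \{1,\ldots,h\}$, and any $(x_3,\ldots,x_\nu) \in \{1,\ldots,h\}^{\nu-2}$, the $2\nu$-tuple obtained by setting $y_i = x_i$ for $i \ge 3$ is a solution of~\eqref{eq:eq x,y}. Distinct choices yield distinct $2\nu$-tuples, so $K_\nu(h, s) \ge h^{\nu-2} K_2(h, s)$. It thus suffices to prove $K_2(h, s) \gg h^2 (\log h)^{2\varepsilon}$.

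For $\nu = 2$, I would set $a_i = x_i + s$, $b_i = y_i + s$, so all four lie in $J := [s+1, s+h]$, and count 4-tuples in $J^4$ with $a_1 a_2 = b_1 b_2$. For such a 4-tuple with $a_1 < b_1$, reducing $a_1/b_1 = p/q$ to lowest terms (with $\gcd(p,q) = 1$ and $p < q$) uniquely decomposes the 4-tuple as $a_1 = pc$, $b_1 = qc$, $a_2 = qb'$, $b_2 = pb'$ for positive integers $c, b' \in J/p \cap J/q$, where $J/n := \{m \in \Z_{>0} : nm \in J\}$. Combining this bijection with the $a \leftrightarrow b$ swap symmetry and the $h^2$ diagonal solutions where $a_1 = b_1$ (which force $a_2 = b_2$) gives the exact identity
\[
K_2(h, s) = h^2 + 2 \sum_{\substack{1 \le p < q \\ \gcd(p, q) = 1}} \ell_{p, q}^2, \qquad \ell_{p, q} := \#(J/p \cap J/q).
\]

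To bound the sum from below, I would reparametrize by $d = q - p$ (so that $\gcd(p, q) = \gcd(p, d)$) and establish the estimate $\ell_{p, p+d} \gg h/p$ for $p$ in a range of the shape $[\max(2d,\, 2sd/h),\, h/C]$ with a suitable constant $C$, by direct computation of $\lfloor (s+h)/(p+d) \rfloor - \lceil (s+1)/p \rceil + 1$. Summing this over $p$ coprime to $d$ (density $\varphi(d)/d$) yields $\sum_p \ell_{p, p+d}^2 \gg \varphi(d) h^3/(s d^2)$ in the regime $s \gtrsim h$ (with an even stronger estimate for smaller $s$, which also works). Summing over $d$ up to $D \asymp h^2/s$ and invoking the classical asymptotic $\sum_{d \le D} \varphi(d)/d^2 = (6/\pi^2) \log D + O(1)$ together with $\log D \asymp \log h$ throughout the allowed range of $s$ gives $\sum_{p < q,\, \gcd(p,q)=1} \ell_{p, q}^2 \gg h^3 \log h / s$. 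Substituting $s \le h(\log h)^{1/2 - \varepsilon}$ and using $\varepsilon < 1/2$ then yields
\[
K_2(h, s) \gg \frac{h^3 \log h}{s} \ge h^2 (\log h)^{1/2 + \varepsilon} \ge h^2 (\log h)^{2\varepsilon},
\]
as required.

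The main technical point will be establishing the uniform lower bound $\ell_{p, p+d} \gg h/p$, which requires careful bookkeeping of the error from floor/ceiling functions and of the effective range of $p$, especially near $p \asymp sd/h$ where the shift by $s$ becomes the dominant constraint. Once this is in place, the final summation over $d$ is a routine partial-summation argument using the standard mean value of $\varphi(d)/d^2$.
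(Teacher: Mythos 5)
Your proof is correct and takes a genuinely different route from the paper's. The paper invokes the Ayyad--Cochrane--Zheng asymptotic (Lemma~\ref{lem:ACZ}) for $N(h)=K_2(h,0)$, observes that $N(h+s)-N(s)$ can be bounded above by a symmetric count on which the H\"older-type character-sum inequality (Lemma~\ref{lem:Kss}) applies, and thereby sandwiches $K_2(h,s)$ between two explicit asymptotics; the whole argument is three or four lines once these two lemmas are in hand. You instead bypass character sums and the ACZ asymptotic entirely, parametrizing each off-diagonal solution of $a_1a_2=b_1b_2$ in $J^4$ by the coprime pair $(p,q)$ with $a_1/b_1=p/q$, which gives the exact identity $K_2(h,s)=h^2+2\sum_{p<q,\ \gcd(p,q)=1}\ell_{p,q}^2$; I checked this identity and the bijection is correct. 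Your subsequent estimate $\ell_{p,p+d}\gg h/p$ on the stated range, the dyadic summation over $p$ coprime to $d$, and the mean value of $\varphi(d)/d^2$ all work out, and in the regime $s\gtrsim h$ you obtain $K_2(h,s)\gg h^3\log(h^2/s)/s$, which is in fact somewhat sharper than the paper's $\gg h^4\log h/s^2$ there (ratio $\approx s/h\ge 1$). The technical bookkeeping you defer (verifying $\ell_{p,p+d}\ge h/(Cp)$ with explicit $C$, and confirming that an integer coprime to $d$ exists in $[P_1,2P_1]$) is routine and goes through. One remark that applies to both proofs: the reduction $K_\nu\ge h^{\nu-2}K_2$ requires $\nu\ge2$, and the theorem as stated is actually false for $\nu=1$ since $K_1(h,s)=h$ identically; the intended hypothesis is $\nu\ge2$, which you implicitly flag but could state outright.
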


\begin{proof}
As before, let $N(h)$ be the number of integer solutions of the equation
$$
uv = xy, \qquad 1 \le u,v,x,y \le h.
$$
We note that
\begin{equation*}
\begin{split}
N(h+s) - N(s) \le 4 \#\{(u,v&,x,y)\in \Z^4~:~uv = xy,\\
&s+1\le u\le s+h, 1\le v,x,y\le s+h\}.
\end{split}
\end{equation*}
Take an arbitrary prime $p \ge 2(s+h)^2$. Then, in the above range of
variables, the equation $uv = xy$
is equivalent to the congruence $uv \equiv xy\pmod p$.
Using Lemma~\ref{lem:Kss}, we derive
\begin{equation*}
\begin{split}
\#\{(u,v,x,y) \in \Z^4~:~uv = xy,\ s+1\le u\le s+&h,  \  1\le v,x,y\le s+h\}\\
\le &
K_2(h,s)^{1/4}N(h+s)^{3/4}.
\end{split}
\end{equation*}
Thus, we have
\begin{equation}
\label{eq:upper}
N(h+s) - N(s)\le 4 K_2(h,s)^{1/4}N(h+s)^{3/4}.
\end{equation}
We now consider two cases.

{\it Case~1\/}:  $s < h/2$. Then for a sufficiently large $h$, by Lemma~\ref{lem:ACZ} have
$$
N(h+s) \ge N(h) \ge  h^2 \log h \quad \text{and}\quad
N(s) \le   N(\fl{h/2}) \le  0.5 h^2 \log h.
$$
Thus,
$$
N(h+s)-N(s)\ge 0.5h^2\log h.
$$
By~Lemma~\ref{lem:ACZ} again, we have
$$
N(h+s)\le 2h^2\log h.
$$
Inserting these bounds into~\eqref{eq:upper}, we obtain $K_2(h,s)  \gg h^2 \log h$ and the result follows in this case.

{\it Case~2\/}:   $h/2 < s \le h \log h$. Then from Lemma~\ref{lem:ACZ} we get
$$
N(h+s) - N(s)  \gg sh \log h.
$$
By Lemma~\ref{lem:ACZ} we also have
$$
N(h+s)\ll s^2\log h.
$$
Combining these bounds with  bounds~\eqref{eq:upper}, we obtain $K_2(h,s)  \gg h^2 (\log h)^{2\varepsilon}$.
\end{proof}

\subsection{Asymptotic formula for the number of solutions of multiplicative congruences
for almost all $s\in\F_p$}

We start with the cases of $\nu=2$ and $\nu=3$ where we have stronger results that
in the case of arbitrary $\nu$.

\begin{theorem}
\label{thm:GProdSet nu=2 s}
For every $\varepsilon$ with $2> \varepsilon > 0$ and
$$
3\le h \le p^{1/(2  + \varepsilon/2)}
$$
for all but $O(h^{1+\varepsilon})$ values of $s \in \F_p$ we have the asymptotic formula
$$
K_{2}(p,h,s) =  2 h^2 + O\(h^{2-\varepsilon/2}
\exp\(O\(\frac{\log h}{\log\log h}\)\)\).
$$
\end{theorem}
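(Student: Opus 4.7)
The plan is to separate trivial from non-trivial contributions to $K_2(p,h,s)$ and then use Lemmas~\ref{lem:CommonSols1} and~\ref{lem:LinearCongr} to show that any $s$ for which the non-trivial contribution is large must come from a very thin set of residues modulo $p$.

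First I would deal with the trivial solutions. For $\nu=2$, if a solution of~\eqref{eq:cong x,y} satisfies $x_i=y_j$ for some $i,j$, then since $x_i+s\not\equiv0\pmod p$ one may cancel to deduce $\{x_1,x_2\}=\{y_1,y_2\}$ as multisets. These trivial solutions contribute $2h^2+O(h)$, regardless of $s$. Denote by $N_s$ the number of remaining solutions, which automatically satisfy~\eqref{eq:x neq y}, so that $K_2(p,h,s)=2h^2+O(h)+N_s$.

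Next, call $s\in\F_p$ \emph{exceptional} if $N_s\ge h^{2-\varepsilon/2}\exp(C\log h/\log\log h)$ for a constant $C$ to be fixed. Since $h\le p^{1/(2+\varepsilon/2)}<0.5\,p^{1/2}$, Lemma~\ref{lem:CommonSols1} with $\nu=2$ applies to each exceptional $s$ and produces at least
$$M\gg h^{1-\varepsilon/2}\exp(C'\log h/\log\log h)$$
distinct non-constant polynomials $R(Z)=A_1Z+A_2\in\Z[Z]$ with $|A_1|\le 2h$, $|A_2|\le h^2$ and $A_1s+A_2\equiv 0\pmod p$, where $C'=C-O(1)$. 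Because $|A_2|\le h^2<p$, no such $R$ can be constant, so $A_1\ne 0$ and each $R$ yields a solution of $y\equiv sx\pmod p$ with $x=A_1\in I:=[-2h,2h]$ and $y=-A_2\in J:=[-h^2,h^2]$.

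Now I would apply Lemma~\ref{lem:LinearCongr} with $|I|=4h+1$, $|J|=2h^2+1$ and $\gamma:=(M-1)/|I|\asymp h^{-\varepsilon/2}\exp(C'\log h/\log\log h)$. The auxiliary hypothesis $|J|<\gamma p/15$ reduces to $p\gg h^{2+\varepsilon/2}\exp(-C'\log h/\log\log h)$, which follows from $h\le p^{1/(2+\varepsilon/2)}$ once $C$ (hence $C'$) is large enough. The lemma's conclusion is that every exceptional $s$ satisfies $s\equiv a/b\pmod p$ for integers
$$|a|\le\frac{|J|}{\gamma|I|}\ll h^{1+\varepsilon/2}\exp(-C'\log h/\log\log h),\qquad 0<b\le\gamma^{-1}\ll h^{\varepsilon/2}\exp(-C'\log h/\log\log h).$$

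Finally, since each pair $(a,b)$ determines $s\bmod p$ uniquely, the number of exceptional residues is at most the number of such pairs, namely $O(h^{1+\varepsilon}\exp(-2C'\log h/\log\log h))=O(h^{1+\varepsilon})$. For the remaining $s$, one has $N_s<h^{2-\varepsilon/2}\exp(C\log h/\log\log h)$, which combined with the trivial main term yields the asymptotic formula. The main delicate point is to align the two parameter regimes: the count $M$ from Lemma~\ref{lem:CommonSols1} must be both large enough for Lemma~\ref{lem:LinearCongr} to apply with an admissible $\gamma$, and small enough that the resulting bounds on $(a,b)$ give only $O(h^{1+\varepsilon})$ residues. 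Both conditions hold throughout the stated range $h\le p^{1/(2+\varepsilon/2)}$ provided $C$ is taken sufficiently large.
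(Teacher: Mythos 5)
Your proposal is correct and follows the same route as the paper: separate the $2h^2+O(h)$ trivial solutions, feed the nontrivial count into Lemma~\ref{lem:CommonSols1} to extract many distinct linear congruences $As+B\equiv0\pmod p$, and then invoke Lemma~\ref{lem:LinearCongr} with $\gamma\asymp h^{-\varepsilon/2}$ to confine the exceptional $s$ to $O(h^{1+\varepsilon})$ values. The only cosmetic difference is that you take $\gamma=(M-1)/|I|$ adaptively, whereas the paper fixes $\gamma$ as a constant multiple of $h^{-\varepsilon/2}$; both choices satisfy the hypotheses of Lemma~\ref{lem:LinearCongr} (in your case $\gamma<1$ is automatic since at most one $A_2\in[-h^2,h^2]$ can pair with each $A_1$, forcing $M\le4h$) and give the same conclusion.
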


\begin{proof}  Assume that $s$ is such that
\begin{equation}
\label{eq:nu2}
(x_1+s) (x_2+s)\equiv  (y_1+s) (y_2+s)\not\equiv0 \pmod p
\end{equation}
is satisfied for at least $ h^{2-\varepsilon/2}\exp\(C_1\log h/\log\log h\)$
choices of integers $1\le x_1,x_2, y_1,y_2\le h$
with $\{x_1,x_2\} \ne \{y_1,y_2\}$, where  $C_1>0$ is a sufficiently
large constant.

Using Lemma~\ref{lem:CommonSols1}, we see that $s$ satisfies at least
$C_2h^{1-\varepsilon/2}$ distinct linear congruences
$As + B \equiv 0 \pmod p$ with $0< |A| \le 2h$  and $|B| \le h^2$
where  $C_2>0$ is another sufficiently large constant.
Thus by Lemma~\ref{lem:LinearCongr}, applied with $\gamma = 16h^{-\varepsilon/2}$
we obtain that $s$ can take at most $O(h^{1+\varepsilon})$ possible values.
\end{proof}

\begin{theorem}
\label{thm:GProdSet nu=3 s}
For every $\varepsilon$ with $2> \varepsilon > 0$ and
$$
3\le h \le p^{1/(4 +\varepsilon)}
$$
for all but $O(h^{1+\varepsilon})$ values of $s \in \F_p$ we have
the asymptotic formula
$$
K_{3}(p,h,s) =  6 h^3 + O\(h^{3-\varepsilon/2}
\exp\(O\(\frac{\log h}{\log\log h}\)\)\).
$$
\end{theorem}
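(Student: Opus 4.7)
The plan is to follow the pattern of Theorem~\ref{thm:GProdSet nu=2 s}, using Lemma~\ref{lem:CommonSols2} in place of Lemma~\ref{lem:LinearCongr} and invoking Theorem~\ref{thm:GProdSet nu=2 s} itself as an induction hypothesis to dispose of solutions sharing a coordinate.

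Suppose $K_3(p,h,s) \ge 6h^3 + C h^{3-\varepsilon/2}\exp(C_1\log h/\log\log h)$ for constants $C, C_1$ to be chosen sufficiently large. The trivial (permutation) solutions of~\eqref{eq:cong x,y} with $\nu=3$ account for $6h^3 + O(h^2)$. The non-trivial solutions split into two classes: those for which some $x_i = y_j$, which after cancellation correspond (up to a factor of $9h$ from the choice of the shared value and of the two positions where it appears) to non-trivial $\nu=2$ solutions with the same $s$; and those satisfying the disjointness condition~\eqref{eq:x neq y}. Since $p^{1/(4+\varepsilon)} \le p^{1/(2+\varepsilon/2)}$, Theorem~\ref{thm:GProdSet nu=2 s} applies, so excluding an additional exceptional set of size $O(h^{1+\varepsilon})$ the first class contributes at most $O(h \cdot h^{2-\varepsilon/2}\exp(O(\log h/\log\log h))) = O(h^{3-\varepsilon/2}\exp(O(\log h/\log\log h)))$, absorbable into the target error. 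Consequently the disjoint class alone must number at least $\tfrac{C}{2}h^{3-\varepsilon/2}\exp(C_1\log h/\log\log h)$.

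By Lemma~\ref{lem:CommonSols1} applied with $\nu=3$, this disjoint count produces at least $T \gg h^{2-\varepsilon/2}\exp(c\log h/\log\log h)$ distinct triples $(U,V,W) \in \Z^3$ with $|U|\le 3h$, $|V|\le 3h^2$, $|W|\le h^3$, and $Us^2 + Vs + W \equiv 0 \pmod p$. The hypothesis $h \le p^{1/(4+\varepsilon)}$ is exactly calibrated so that $T \gg \max\{h, h^4p^{-1/2}, h^6p^{-1}\}$, and Lemma~\ref{lem:CommonSols2} then applies, producing integers $r,t$ with $rs \equiv t \pmod p$, $0 < |r| \ll h^{3/2}T^{-1/2}$, $|t| \ll h^{5/2}T^{-1/2}$.

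Each exceptional $s$ is then pinned down modulo $p$ by the coprimified pair $(r, t)$, because $|rt|< p$. The stated bounds of Lemma~\ref{lem:CommonSols2} with $T \sim h^{2-\varepsilon/2}$ give only $|r|\cdot|t| \ll h^4/T \sim h^{2+\varepsilon/2}$, which falls short of the target $O(h^{1+\varepsilon})$. Crucially, however, an inspection of the proof of Lemma~\ref{lem:CommonSols2} reveals that its Cases~1 and~3 (those where $\lambda_2 > 1$, or $\lambda_1 \ge 1/(3h)$) actually deliver the sharper bounds $|r| \ll h^2/T$ and $|t| \ll h^3/T$, hence $|r|\cdot |t| \ll h^5/T^2 \sim h^{1+\varepsilon}$, exactly matching the desired count. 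The main obstacle is therefore Case~2 (where $\lambda_1 < 1/(3h)$ and $\lambda_2 \le 1$): there the extracted linear polynomial $rZ+t$ divides the second basis polynomial in $\Z[Z]$, and I plan to exploit this algebraic rigidity --- either by applying Lemma~\ref{lem:LinearCongr} to the collection of $(r,t)$-pairs gathered across different exceptional $s$, or by using the additional relations forced on the cubic differences $P_\vec{x} - P_\vec{y}$ together with Corollary~\ref{cor:twoPol} and Lemma~\ref{lem:Div ANF} --- to confine the remaining exceptional $s$ to $O(h^{1+\varepsilon})$ residues.
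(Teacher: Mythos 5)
You correctly reproduce the skeleton of the paper's argument through the application of Lemma~\ref{lem:CommonSols2}: the induction on $\nu=2$, the invocation of Lemma~\ref{lem:CommonSols1} to produce $T\gg h^{2-\varepsilon/2}$ triples $(U,V,W)$, the verification that $h\le p^{1/(4+\varepsilon)}$ makes Lemma~\ref{lem:CommonSols2} applicable, and the conclusion that $rs\equiv t\pmod p$ with $0<|r|\ll h^{1/2+\varepsilon/4}$, $|t|\ll h^{3/2+\varepsilon/4}$. You also correctly identify that directly counting the pairs $(r,t)$ gives only $O(h^{2+\varepsilon/2})$ candidates, which is not enough.

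Here, however, is where you stop short of the paper's actual resolution. The paper does \emph{not} attempt to bound the number of possible $(r,t)$ pairs at all, and hence does not need to distinguish among the cases of Lemma~\ref{lem:CommonSols2}. Instead, it observes that once $rs\equiv t\pmod p$ with those size bounds, one multiplies the congruence
$(x_1+s)(x_2+s)(x_3+s)\equiv(y_1+s)(y_2+s)(y_3+s)\pmod p$
through by $r^3$ and replaces $rs$ by $t$, obtaining a relation whose left-hand side has absolute value $\le 14h^{4+\varepsilon/2}<p$ (this is exactly where the hypothesis $h\le p^{1/(4+\varepsilon)}$ is used). The congruence therefore becomes the \emph{genuine equation}
$(x_1+\sigma)(x_2+\sigma)(x_3+\sigma)=(y_1+\sigma)(y_2+\sigma)(y_3+\sigma)$
over $\Q$ with $\sigma=t/r$. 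Since $\gcd(r,t)=1$, distinct residues $s$ map to distinct rationals $\sigma$, so $K_3(p,h,s)=K_3(h,\sigma)$; one now invokes Theorem~\ref{thm:RatProdSet s}, which already says that for all but $O(h^{1+\varepsilon})$ rationals $\sigma$ the equation count is $6h^3+O(h^{3-\varepsilon/2}\exp(O(\log h/\log\log h)))$. The injectivity of $s\mapsto\sigma$ transfers this $O(h^{1+\varepsilon})$ exceptional-set bound back to $s$, with no need to count $(r,t)$ pairs. Your proposed alternatives for Case~2 (applying Lemma~\ref{lem:LinearCongr} to the $(r,t)$-pairs across different $s$, or using divisor-function arguments on the cubic differences) are vague and not carried out; the second one is morally related to what goes into the \emph{proof} of Theorem~\ref{thm:RatProdSet s}, but you would in effect have to rediscover that theorem. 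As written, your argument has a genuine gap at exactly the step that the lifted-equation reduction is designed to handle.
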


\begin{proof}  By Theorem~\ref{thm:GProdSet nu=2 s}, we see that there is a set
$\cS\subseteq\F_p$ of cardinality $O(h^{1+\varepsilon})$ such that
for all $s\in\F_p\setminus\cS$, the equation~\eqref{eq:nu2} has at most
$h^{2-\varepsilon/2} \exp\(O\(\log h/\log\log h\)\)$
solutions with $\{x_1,x_2\}\neq\{y_1,y_2\}$.

It is now easy to  see that for $s \not \in \cS$ we have
$$
K_{3}(p,h,s) = 6 h^3 + O\(h^{3-\varepsilon/2}
\exp\(O\(\frac{\log h}{\log\log h}\)\) + N\),
$$
where
$N$ denotes the contribution  to $K_{3}(p,h,s)$  of solutions with
$x_i\not=y_j$, $1 \le i,j \le 3$.

Assume that
$$
N \ge  h^{3-\varepsilon/2}\exp\(C_1\frac{\log h}{\log\log h}\)
$$
for some appropriate constant $C_1>0$.

We see from Lemma~\ref{lem:CommonSols1} that
for sufficiently large $h$ and
for another appropriate constant $C_2>0$ there are
at least $T = \fl{C_2h^{2-\varepsilon/2}}$ different triples $(U,V,W)$ with
$$
|U|\le3h,\qquad |V|\le3h^2,\qquad  |W|\le h^3,
$$
such that
$$
Us^2+Vs+W\equiv 0 \pmod p.
$$
Clearly, $T$ satisfies the conditions of Lemma~\ref{lem:CommonSols2},
thus there are some integers $r$ and $t$ with
$0<|r| \ll h^{1/2+\varepsilon/4}$,
$t  \ll h^{3/2+\varepsilon/4}$, and $rs\equiv t\pmod p$.

Clearly we can assume that $\gcd(r,t)=1$.

We see that
$$
(x_1+s) (x_2+s) (x_3+s)\equiv  (y_1+s) (y_2+s)(y_3+s)\not\equiv0 \pmod p
$$
implies
\begin{equation}
\begin{split}
\label{eq:equiv form}
(x_1+x_2 + x_3  -y_1 &-y_2-y_3) t^2 \\
+ (x_1x_2 + x_1 &x_3 +  x_2x_3-y_1y_2- y_1y_3 - y_2y_3) rt \\
& + (x_1x_2x_3  -y_1y_2y_3)r^2 \equiv0 \pmod p.
\end{split}
\end{equation}
The absolute value of the left-hand side of~\eqref{eq:equiv form} is
at most $14 h^{4+\varepsilon/2}<p$, and we get the equation
\begin{equation*}
\begin{split}
(x_1+x_2 + x_3  -y_1  -y_2-y_3) &t^2 \\
+ (x_1x_2 + x_1  x_3 +  x_2x_3& - y_1  y_2- y_1y_3 - y_2y_3) rt \\
& + (x_1x_2x_3  -y_1y_2y_3)r^2 =0,
\end{split}
\end{equation*}
which, in turn, we transform into an   equivalent  equation
$$
(x_1+\sigma) (x_2+\sigma) (x_3+\sigma)=
(y_1+\sigma) (y_2+\sigma)(y_3+\sigma)
$$
with a rational $\sigma= t/r$. Since $\gcd(t,r)=1$
different values of $s$ lead to different values of $\sigma$.
Using  Theorem~\ref{thm:RatProdSet s} we conclude the proof.
\end{proof}

Furthermore, we have the following general
form of Theorem~\ref{thm:GProdSet nu=3 s} which holds for  an arbitrary $\nu \ge 1$.

\begin{theorem}
\label{thm:GProdSet nu>3 s}
For every $\nu\ge2$, there exists some positive $\gamma(\nu)$
such that for
$$
3\le h \le \gamma(\nu) p^{1/(\nu^2-1)}
$$
and $0<\varepsilon\le2$
for all but $O(h^{1+\varepsilon})$ values of $s \in \F_p$ we have
the asymptotic formula
$$
K_{\nu}(p,h,s) =  \nu! h^\nu + O\(h^{\nu-\varepsilon/2}
\exp\(O\(\frac{\log h}{\log\log h}\)\)\).
$$
\end{theorem}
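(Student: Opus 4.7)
The plan is to argue by induction on $\nu$, with the base case $\nu=2$ supplied by Theorem~\ref{thm:GProdSet nu=2 s} (whose range $h\le p^{1/(2+\varepsilon/2)}$ subsumes $h\le\gamma(2)p^{1/3}$). For the inductive step at $\nu\ge 3$, I first invoke the theorem at $\nu-1$ (combined with Lemma~\ref{lem:Kss}) to handle solutions of~\eqref{eq:cong x,y} in which some $x_i$ coincides with some $y_j$: outside of an exceptional set of size $O(h^{1+\varepsilon})$ for $s$, such ``collapsed'' solutions contribute at most $h^{\nu-\varepsilon/2}\exp(O(\log h/\log\log h))$ to $K_\nu(p,h,s)$. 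It then suffices to show that for all but $O(h^{1+\varepsilon})$ values of $s$, the number of solutions satisfying the disjointness condition~\eqref{eq:x neq y} is bounded by $h^{\nu-\varepsilon/2}\exp(O(\log h/\log\log h))$.

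Suppose $s$ is exceptional in this stronger sense, so there are more than $h^{\nu-\varepsilon/2}\exp(C\log h/\log\log h)$ such disjoint solutions for a large constant $C$. Lemma~\ref{lem:CommonSols1} then produces at least $T\gg h^{\nu-1-\varepsilon/2}$ distinct non-constant polynomials
\[
R(Z)=A_1Z^{\nu-1}+\ldots+A_\nu\in\Z[Z],\qquad |A_i|\le\binom{\nu}{i}h^i,
\]
satisfying $R(s)\equiv 0\pmod p$. I introduce the lattice
\[
\Gamma=\{(A_1,\ldots,A_\nu)\in\Z^\nu:A_1s^{\nu-1}+\ldots+A_\nu\equiv 0\pmod p\}
\]
together with the symmetric convex body $D=\{(A_1,\ldots,A_\nu)\in\R^\nu:|A_i|\le\binom{\nu}{i}h^i\}$. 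Since $\#(D\cap\Gamma)\ge T$, Lemma~\ref{lem:latpoints} gives the successive minima estimate $\prod_{i=1}^\nu\min\{1,\lambda_i(D,\Gamma)\}\ll T^{-1}$.

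From the smallness of the successive minima I extract two linearly independent short vectors of $\Gamma$, producing non-constant polynomials $P_1,P_2\in\Z[Z]$ of degree at most $\nu-1$ with $P_i(s)\equiv 0\pmod p$ whose $j$-th coefficient is bounded by roughly $(\lambda_1\lambda_2)^{1/2}h^j$. Applying Lemma~\ref{lem:Res} with parameters matching our coefficient bounds gives $|\Res(P_1,P_2)|\ll h^{\nu^2-1}$. Choosing $\gamma(\nu)$ small in the hypothesis $h\le\gamma(\nu)p^{1/(\nu^2-1)}$, the divisibility $p\mid\Res(P_1,P_2)$ forces $\Res(P_1,P_2)=0$, hence $P_1$ and $P_2$ share a non-trivial factor in $\Z[Z]$. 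Running through the cases determined by which of $\lambda_1,\ldots,\lambda_\nu$ are small (only finitely many shapes) and using Lemma~\ref{lem:PolCoef} to control coefficient sizes of extracted factors, I produce a primitive linear factor $rZ-t\in\Z[Z]$ with $r\le h^{\varepsilon/2}$ and $|t|\le h^{1+\varepsilon/2}$ such that $rs\equiv t\pmod p$. This $\nu$-dimensional lattice-geometric case analysis, which extends Lemma~\ref{lem:CommonSols2} from degree $2$ to degree $\nu-1$, is the main technical obstacle.

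With $s\equiv t/r\pmod p$, $\gcd(r,t)=1$, and the above bounds on $r,t$, the range $h\le\gamma(\nu)p^{1/(\nu^2-1)}$ precludes wraparound so that~\eqref{eq:cong x,y} lifts to the genuine equation
\[
(x_1+\sigma)\cdots(x_\nu+\sigma)=(y_1+\sigma)\cdots(y_\nu+\sigma),\qquad\sigma=t/r,
\]
of the form~\eqref{eq:eq x,y}. Distinct exceptional $s$ yield distinct rationals $\sigma$ which, by construction, lie inside the exceptional set $\cS=\{t/r:|t|\le h^{1+\varepsilon/2},\,1\le r\le h^{\varepsilon/2}\}$ of Theorem~\ref{thm:RatProdSet s} whose size is $O(h^{1+\varepsilon})$. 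This bounds the number of exceptional $s$ by $O(h^{1+\varepsilon})$ and completes the induction.
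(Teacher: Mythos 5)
Your proposal and the paper's proof share a key ingredient (using $\Res(\cdot,\cdot)\equiv 0\pmod p$ together with $h\le\gamma(\nu)p^{1/(\nu^2-1)}$ to force a vanishing resultant, then passing to the equation over $\C$ and invoking Theorem~\ref{thm:RatProdSet s}), but the routes are otherwise quite different, and your route has a genuine gap that you yourself flag. You set up a lattice in $\Z^\nu$, invoke Lemma~\ref{lem:latpoints}, extract two short vectors, and then claim that a case analysis ``extending Lemma~\ref{lem:CommonSols2} from degree $2$ to degree $\nu-1$'' produces a \emph{linear} factor $rZ-t$ with $r\le h^{\varepsilon/2}$, $|t|\le h^{1+\varepsilon/2}$. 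This step is not carried out, and it is not clear it can be: Lemma~\ref{lem:CommonSols2} relies crucially on the low degree (a nontrivial common factor of two polynomials of degree $\le 2$, at least one of them linear, is automatically linear), whereas for degree $\nu-1\ge 3$ the common factor of your $P_1,P_2$ may well be an \emph{irreducible polynomial of degree $\ge 2$}, in which case no rational $\sigma=t/r$ exists and the lifting to~\eqref{eq:eq x,y} with rational shift breaks down. You also have no mechanism that forces the common factor of the two specific short vectors $P_1,P_2$ to divide \emph{all} the solution polynomials $R$, which is what is needed to conclude $K_\nu(p,h,s)=K_\nu(h,\sigma)$.

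The paper avoids the lattice geometry and the induction entirely. It fixes one nontrivial solution of the congruence, forms its polynomial $R^*$, factors it over $\Q$, and singles out a single irreducible factor $W^*$ with $W^*(s)\equiv 0\pmod p$. Then for \emph{every} solution polynomial $R$ it argues $p\mid\Res(R,W^*)$ while $|\Res(R,W^*)|\ll h^{\nu^2-1}<p$, hence $\Res(R,W^*)=0$ and (by irreducibility) $W^*\mid R$. Taking $\sigma$ to be any root of $W^*$ — which may be rational \emph{or} algebraic irrational of degree up to $\nu-1$ — the congruence lifts exactly to the equation~\eqref{eq:eq x,y}, so $K_\nu(p,h,s)=K_\nu(h,\sigma)$, and Theorem~\ref{thm:RatProdSet s} (which covers all $\sigma\in\C$, with the exceptional set consisting of the rationals $t/r$ with $|t|\le h^{1+\varepsilon/2}$, $r\le h^{\varepsilon/2}$) finishes the job; the map $s\mapsto\sigma$ has preimages of size at most $\nu-1$. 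So the paper gets around your ``main technical obstacle'' by noting that no linear factor is needed at all — Theorem~\ref{thm:RatProdSet s} already handles irrational $\sigma$ for free, and a single irreducible $W^*$ suffices because it must divide every $R$, not just $P_1$ and $P_2$. One further minor point: your inductive treatment of the collapsed case does not actually need Lemma~\ref{lem:Kss}; one simply cancels the coinciding pair $(x_i+s)$, $(y_j+s)$ and applies the bound on nontrivial solutions at level $\nu-1$, as in the proof of Theorem~\ref{thm:IrratProdSet s}.
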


\begin{proof} Without loss of generality we may assume that $p$
is large enough. As before, we say that a solution
of~\eqref{eq:asym cong x,y} is {\it trivial\/} if $(y_1,\ldots,y_\nu)$ is a permutation
of $(x_1,\ldots,x_\nu)$. Let $S$ be the set of all $s\in\F_p$ such that
~\eqref{eq:asym cong x,y} has at least one nontrivial solution. If
$s\in\F_p\setminus\{S\}$ then
$$K_{\nu}(p,h,s) =  \nu! h^\nu + O\(h^{\nu-1}\).$$
Take $s\in S$ and fix a nontrivial solution
$(x_1^*,\ldots,x_\nu^*, y_1^*,\ldots,y_\nu^*)$ of~\eqref{eq:asym cong x,y}.
Define the polynomial
$$R^*(Z)=(x_1^*+Z)\ldots(x_\nu^*+Z) - (y_1^*+Z)\ldots(y_\nu^*+Z).$$
Clearly, $R^*$ is not a zero polynomial and $R^*(s)\equiv0\pmod p$.
In particular, since $1/(\nu^2-1) < 1/\nu$ for $\nu\ge 2$ we
see that $R^*$ is not a constant polynomial,
assuming that $p$ is sufficiently large.

We decompose $R^*$ as a product of irreducible over $\Q$ polynomials
$$R^*(z)=W_1(Z)\ldots W_n(Z)$$
with $W_j\in\Z[Z]$ for $j=1,\ldots,n$. We have $W_j(s)\equiv0\pmod p$
for some $j$. Denote $W^*=W_j$.

Now we consider any nontrivial solution
$(x_1,\ldots,x_\nu, y_1,\ldots,y_\nu)$ of~\eqref{eq:asym cong x,y}
and define the polynomial
$$R(Z)=(x_1+Z)\ldots(x_\nu+Z) - (y_1+Z)\ldots(y_\nu+Z).$$
Again, $R$ is not a zero polynomial and $R(s)\equiv0\pmod p$.
As in the above we see that $R$ is not a constant polynomial.

Writing
$$
R(Z)= \sum_{j=0}^{\nu-1} r_j Z^{\nu-1 -j}
$$
we see that
\begin{equation}
\label{eq:Bound rj}
r_j \ll h^{j+1},\qquad j=0, \ldots,\nu-1 .
\end{equation}

Clearly $R^*$ satisfies the same bound. So writing
$$
W^*(Z)= \sum_{j=0}^{\mu-1} w_j Z^{\mu-1 -j}
$$
for some $\mu \le \nu$ and applying Lemma~\ref{lem:PolCoef},
we infer
\begin{equation}
\label{eq:Bound wj}
w_j \ll h^{j+1},\qquad j=0, \ldots,\mu-1.
\end{equation}

We have $\Res(R,W^*)\equiv0\pmod p$ since
$R(s)\equiv W^*(s)\equiv0\pmod p$.
Next, by Lemma~\ref{lem:Res}, (applied with $\rho=\vartheta=1$) we see that
$$\Res(R,W^*)  \ll h^{\nu^2-1}.
$$
Thus, provided that $\gamma(\nu)$ is small enough and $p$
is large enough we obtain the inequality
$$|\Res(R,W^*)|<p.
$$
 Hence, $\Res(R,W^*)=0$. Using irreducibility of the
polynomial $W^*$ we conclude that $R$ is divisible by $W^*$ in $\Q[Z]$.

Consider a mapping $\Phi:\, S\to\C$ by associating with any $s\in S$
a zero $\sigma$ of a corresponding polynomial $W^*$. We see from the above discussion that
any solution
$(x_1,\ldots,x_\nu, y_1,\ldots,y_\nu)$ of congruence~\eqref{eq:asym cong x,y}
induces the same solution of the equation~\eqref{eq:eq x,y}. Thus,
$$K_{\nu}(p,h,s) = K_{\nu}(h,\sigma).$$
Using Theorem~\ref{thm:RatProdSet s}, we get
$$
K_{\nu}(p,h,s) =  \nu! h^\nu + O\(h^{\nu-\varepsilon/2}
\exp\(O\(\frac{\log h}{\log\log h}\)\)\)
$$
unless $\sigma=\Phi(s)$ is an element of an exceptional subset of $\C$
of cardinality  $O(h^{1+\varepsilon})$. Taking into account that
a preimage $\Phi^{-1}(\sigma)$ contains at most
$n \le \nu-1$  elements for any
$\sigma\in\C$, we complete the proof.
\end{proof}

\section{Distribution of Elements of Large Multiplicative Order}
\label{sec:Ord}

\subsection{Distribution in very short   intervals
for almost all $p$}

Let $\ordp a$ denote  the multiplicative order of $a\in \F_p^*$.
Our aim is to prove that for almost all primes $p$ very short intervals in $\F_p$
(including intervals of  fixed size) contain an element of large multiplicative
order.

For $h=2$, Chang~\cite{Chang2} has shown that
for any function $\eta(z)>0$ with $\eta(z) \to 0$ as $z\to \infty$ and sufficiently large positive integer  $T$,
for all but $o(\pi(T))$ primes $p \le T$,  for all but $O(1)$ elements
$s \in \F_p$, we have
\begin{equation}
\label{eq:ConsecEll}
\max\{\ordp s, \ordp (s+1)\} >  p^{1/4+\eta(p)}.
\end{equation}
In fact her argument implies that~\eqref{eq:ConsecEll} hold
for almost all primes $p$ and
for any $s \in \F_p^*$ with $\ordp s > 3$.

We note that if $\ordp s=3$ then $s+1\equiv -s^2\pmod p$ thus $\ordp (s+1) = 6$.

For small $h$ we have the following result.

\begin{theorem}
\label{thm: small interval large order}
Let  $\eta(z)>0,\, m(z)>0$ be arbitrary functions with $\eta(z) \to 0$ and $m(z)\to\infty$ as $z\to \infty$. Then
for all but $o(\pi(T))$ primes $p \le T$, any interval $I=[t,t+m(T)]$ has an element $\xi$ for which
$$
\ordp \xi>T^{1/2+\eta(T)}.
$$
\end{theorem}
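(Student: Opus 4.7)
The plan is to show that if some interval $I = [t, t+m(T)]$ contained only elements of order at most $Y := T^{1/2+\eta(T)}$, then one could extract multiplicatively independent integers lying in a small subgroup of $\F_p^*$, producing via geometry of numbers a short integer relation that would contradict their multiplicative independence.

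First I would define the set $\mathcal{G}$ of \emph{good} primes $p \le T$ using Lemma~\ref{lem:ErdMur1}: for all but $o(\pi(T))$ primes $p \le T$, the integer $p-1$ has no divisor in $[Y', Y]$, where $Y' := T^{1/2-\eta(T)}$. I would supplement this by invoking Corollary~\ref{cor:ErdMur2}, Lemma~\ref{lem:NumDiv}, and Lemma~\ref{lem:squaredivides} with suitably chosen parameters so that for $p \in \mathcal{G}$ any divisor $d$ of $p-1$ with $d \le Y'$ in fact satisfies the sharper bound $d \le T^{\eta(T)/2}$. These refinements together still leave $\#\mathcal{G} = (1-o(1))\pi(T)$.

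Now fix $p \in \mathcal{G}$ and an interval $I = [t, t+m(T)]$, and suppose toward contradiction that $\ord_p \xi \le Y$ for every $\xi \in I$. The no-divisor condition upgrades this to $\ord_p \xi \le Y'$, and since $\F_p^*$ is cyclic, all such $\xi$ lie in its unique subgroup $H$ of order $D$, where $D$ is the largest divisor of $p-1$ not exceeding $Y'$; by the construction of $\mathcal{G}$, in fact $D \le T^{\eta(T)/2}$. By Corollary~\ref{mult_ind} applied to $I$, there exist $r \gg \sqrt{m(T)}/\log m(T)$ multiplicatively independent integers $\xi_1, \ldots, \xi_r \in I$, all contained in $H$. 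The relation lattice
$$
\Lambda = \{\vec n \in \Z^r : \xi_1^{n_1}\cdots\xi_r^{n_r} \equiv 1 \pmod p\} \subseteq \Z^r
$$
has index dividing $D$ in $\Z^r$. The sublattice $\Lambda' = \Lambda \cap \{\vec n : \sum_i n_i = 0\}$ has rank $r-1$ and covolume $O(D)$ inside the hyperplane $\sum_i n_i = 0$, so by Lemma~\ref{lem:latpoints} (applied to a suitable symmetric box) there is a nonzero $\vec n \in \Lambda'$ with $\|\vec n\|_\infty \ll D^{1/(r-1)}$. Setting $N^+ = \prod_{n_i > 0} \xi_i^{n_i}$ and $N^- = \prod_{n_i < 0} \xi_i^{|n_i|}$, I have $N^+ \equiv N^- \pmod p$, both positive integers of the same total degree $S = \sum_{n_i > 0} n_i = \sum_{n_i < 0} |n_i|$. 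Writing $\xi_i = t + a_i$ with $0 \le a_i \le m(T)$, a Taylor expansion in the small parameter $a_i/t$ yields
$$
|N^+ - N^-| \ll S\, m(T)\, t^{S-1}
$$
in the regime where $Sm(T)/t$ is bounded; this upper bound is below $p$ in the target range, forcing $N^+ = N^-$ as integers. By multiplicative independence this implies $\vec n = \vec 0$, a contradiction.

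The main obstacle is making this final step uniform in $t$ and robust to the possibly very slow growth of $m(T)$. When $m(T)$ grows barely, $r$ is small and Minkowski yields only $\|\vec n\|_\infty \ll D^{1/(r-1)}$, which is not tiny unless $D$ itself is very small; this is precisely why the sharper control $D \le T^{\eta(T)/2}$ extracted from Corollary~\ref{cor:ErdMur2} and Lemma~\ref{lem:NumDiv} is essential. For $t$ close to $p$ the integer representatives of elements of $I$ are large, and the Taylor bound degrades; here I would reduce via the congruence $\xi \equiv \xi - p \pmod p$ to work with small representatives instead, since the multiplicative order depends only on the residue class. Stitching these cases together while preserving multiplicative independence across the reduction is the technical heart of the argument.
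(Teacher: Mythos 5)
Your proposal contains two gaps that I believe are fatal to this line of attack.

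First, the reduction to a single small subgroup is not correct. You assert that for $p \in \mathcal{G}$ every divisor $d$ of $p-1$ with $d\le Y'$ in fact satisfies $d\le T^{\eta(T)/2}$. Neither Lemma~\ref{lem:ErdMur1} nor Corollary~\ref{cor:ErdMur2} gives this: the corollary controls only the smooth part of $p-1$ (the product of small prime factors), not all divisors below $Y'$. Generically $p-1$ has a divisor close to $Y'$ itself, so the unique subgroup of order equal to the largest divisor $\le Y'$ can have order as large as roughly $T^{1/2-\eta(T)}$, not $T^{\eta(T)/2}$. Moreover the set of elements of order $\le Y'$ is not contained in a single subgroup of order $\le Y'$ when $p-1$ has several divisors below $Y'$ whose lcm exceeds $Y'$. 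The paper handles this carefully: it factors $p-1 = AB$ with $A$ small and $B$ squarefree with few prime factors (hence few divisors), so the elements of $I$ lie in a \emph{small collection} of subgroups $\cH = \langle g^b\rangle$ indexed by $b\mid B$, each of order $< T^{(1-\eta)/2}$, and then pigeonholes inside that collection. But no step of the paper or of any of the cited lemmas yields the bound $D \le T^{\eta(T)/2}$.

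Second, and more fundamentally, the geometry-of-numbers plus Taylor-expansion step cannot be made to work in the stated generality. Minkowski applied to $\Lambda'$ gives $\|\vec n\|_\infty \ll D^{1/(r-1)}$ with $D$ of size a positive power of $T$ (at best $T^{(1-\eta)/2}$) and $r \gg \sqrt{m}/\log m$. Since $m = m(T)$ is permitted to grow arbitrarily slowly, $r$ can be any slowly growing quantity, so $D^{1/(r-1)}$ remains a positive power of $T$; in particular the total degree $S = \sum_{n_i>0}n_i$ of the resulting relation is not bounded by $1$. But for $S\ge 2$ and $t$ in the middle range of $\F_p$ (say $t\approx p/2$ in either signed representative), we have $|N^+ - N^-| \asymp S\,m\,t^{S-1} \gg mp$, far larger than $p$; for instance for $S=2$ one computes directly $|(t+a_1)(t+a_2)-(t+a_3)(t+a_4)| = |(a_1+a_2-a_3-a_4)t + (a_1a_2-a_3a_4)|$, which can be as large as $2mt \gg p$. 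Shifting by $p$ does not help in this intermediate regime. So the congruence $N^+\equiv N^-\pmod p$ does not upgrade to an integer equality, and multiplicative independence is not contradicted.

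The paper avoids both obstacles by an entirely different mechanism. It first builds a fixed finite family of polynomials $P_{\cE_1,\cE_2,\widetilde\mu}$ (built from the index set $\{1,\ldots,m\}$, not from $t$), factors them into irreducibles, and forms the resultant product $R$; good primes are additionally required to satisfy $\gcd(p,R)=1$. Once the pigeonhole places $\#\cS > \sqrt m$ shifted elements of $I$ into a single subgroup $\cH$, a counting argument (the number of products $\prod_{j\in\cE}(t+j)^{\mu_j}$ exceeds $\#\cH$) forces some $P_{\cE_1,\cE_2,\widetilde\mu}(t)\equiv 0\pmod p$ for every $r$-subset $\cE\subseteq\cS$, hence some irreducible factor $f_\cE$ vanishes at $t$ mod $p$. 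Because $p\nmid R$, the pairwise resultants $\Res(f_\cE,f_\cF)$ must vanish, so all the $f_\cE$ coincide up to a scalar and have a common complex root $\beta$. Then Lemma~\ref{lem:MultIndSubset beta+j} (which rests on the Evertse--Schlickewei--Schmidt unit-equation bound via Corollary~\ref{cor:EvSchSch}) produces a multiplicatively independent subset of $\{\beta+j : j\in\cS\}$ of size $r$, contradicting $f_\cE(\beta)=0$. This resultant-coincidence trick is precisely what circumvents the need for a short integer relation, and is the step your proposal is missing.
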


\begin{proof} We assume that $T$ is large enough.
Using Lemma~\ref{lem:MultIndSubset beta+j} we take $c=c(1/2)>0$.
Note that either by increasing $\eta(T)$ or decreasing $m(T)$,
we may assume that~$\eta(z)>(\log z)^{-1/4}$ and
that
\begin{equation}
\label{eqn:ErdMurDefine m}
m(T)=\fl{\eta(T)^{-C/\eta(T)}},\quad C=3/c.
\end{equation}
holds. Next, denote
\begin{equation}
\label{eqn:ErdMurDefine rNU}
m = m(T), \quad r=\fl{c\log m}, \quad \mu= \fl{m^{-1}T^{1/2(r+2)}}.
\end{equation}
Given $\cE_1,\cE_2\subseteq \{1,\ldots,m\}$,  $\cE_1\cap \cE_2=\emptyset$ and
$\# \cE_1+ \# \cE_2\le r$, $\widetilde{\mu}=(\mu_j)_{j\in \cE_1\cup \cE_2}$ with
$1\le \mu_j\le \mu$, denote
$$
P_{\cE_1,\cE_2,\widetilde{\mu}}(X)=\prod_{j\in \cE_1}
(X+j)^{\mu_j}-\prod_{j\in \cE_2}(X+j)^{\mu_j}\in\Z[X].
$$
These polynomials are of degree at most $r\mu$ and logarithmic height
$O(r\mu \log m)$. Their number is clearly bounded by
$2^{r+1}\binom{m}{r}\mu^r$ (here we have used that $r\le m/2$).

Factor each polynomial $P_{\cE_1,\cE_2,\widetilde{\mu}}(X)$  in irreducible
(over $\Q$) factors $f\in \Z[X]$ and let $\Pf\subseteq \Z[X]$ be
the set of all polynomials
obtained this way. Hence,
\begin{equation}
\label{eqn:ErdMurCardinalityPi}
\# \Pf\le r2^{r+1}\binom{m}{r}\mu^{r+1}.
\end{equation}
Denote then
$$
R=\prod_{\substack{f,g\in\Pf\\ \Res(f,g)\not=0}}\Res(f,g)\in\Z.
$$
Using Corollary~\ref{cor:twoPol} we see that all coefficients of
any polynomial $f\in\Pf$ are bounded by $(O(m))^{r\mu}$. Next,
using a straightforward bound on the resultants $\Res(f,g)$,
by~\eqref{eqn:ErdMurDefine rNU}
and~\eqref{eqn:ErdMurCardinalityPi}, we derive
$$
|R|\le m^{O(r^4 4^r{\binom{m}{r}}^2\mu^{2(r+2)})}<2^{o(T)}.
$$
Let $\cP_1$ and $\cP_2$ be the sets of exceptional primes in
 Lemma~\ref{lem:ErdMur1} and Corollary~\ref{cor:ErdMur2} respectively.
Hence, denoting
\begin{equation*}
\begin{split}
\cP_0 & = \{p<T~:~ \gcd(p,R)=1,\   p \not\in \cP_1 \cup \cP_2 \},
\end{split}
\end{equation*}
we have
$$
\# \cP_0=(1+o(1))\pi(T).
$$

Take $p\in \cP_0$.
Since $p\not \in \cP_1$,
the desired statement is equivalent to
\begin{equation}
\label{eqn: smaller interval large order}
\ordp \xi\ge T^{1/2-\eta(T)}.
\end{equation}
Let  $I=[t,t+m(T)]\subseteq \F_p$ and assume that
\begin{equation}
\label{eqn: smaller interval large order assume contrary}
\ordp \xi<T^{1/2-\eta(T)} \quad {\rm for\,\, all} \quad \xi\in I.
\end{equation}
Since $p\not \in \cP_2$,  we may write $p-1=AB$, where
$A\le T^{\eta(T)/2}$ and $B$ has no prime factors less than
$T^{c\eta(T)/(\log(1/\eta(T)))}$.
Hence, $B$ has at most $(1/\eta(T))^{1/(c\eta(T))}$ divisors.
For $\xi\in I$ we write
$$
\frac{p-1}{\ordp \xi}=\frac{AB}{\ordp \xi}=ab
$$
where $a\mid A,\, b\mid B$. In particular, $\xi=g^{ab}$ for some primitive root $g$ modulo $p$. Thus, $\xi$ belongs to the subgroup $\cH$ of $\F_p^*$
generated by the element $g^b$.

Since $A\le T^{\eta(T)/2}$, using~\eqref{eqn: smaller interval large order assume contrary}, we derive that
$$
b=\frac{p-1}{a \ordp \xi}> \frac{p-1}{T^{(1-\eta(T))/2}}.
$$
Thus,
$$
\# \cH\le\frac{p-1}{b}<T^{(1-\eta(T))/2}.
$$
Since $B$ has at most $(1/\eta(T))^{1/(c\eta(T))}$ divisors, the above argument shows that we have a family of at most $(1/\eta(T))^{1/(c\eta(T))}$ subgroups $\cH$ of $\F_p^*$ of size
$$
\# \cH <T^{(1-\eta(T))/2}
$$
and such that each element $\xi\in I$ is contained in one of these groups.
Consequently,
we conclude that there is a  subgroup $\cH$ of $\F_p^*$  of order
\begin{equation}
\label{eqn: fixed H=N upper bound}
\#\cH =N<T^{(1-\eta(T))/2}
\end{equation}
such that for the set  $\cS\subseteq [1, m]$, defined by
$$
t+\cS = \cH \cap I,
$$
by the choice of parameters $c$, $m$ and $r$
in~\eqref{eqn:ErdMurDefine m} and~\eqref{eqn:ErdMurDefine rNU},  we have
$$
\# \cS>\sqrt{m}.
$$
Let $\cE\subseteq \cS$, $\#\cE=r$. Assuming all elements
$$
\prod_{j\in \cE}(t+j)^{\mu_j}\in \cH
$$
with $0\le \mu_j\le \mu$ are distinct modulo $p$, it follows from~\eqref{eqn:ErdMurDefine m}
and~\eqref{eqn:ErdMurDefine rNU} that
$$
N\ge \mu^r>T^{(1-\eta(T))/2}
$$
contradicting~\eqref{eqn: fixed H=N upper bound}.

Hence, for each $\cE\subseteq \cS$ with $\# \cE=r$, there are
disjoint $\cE_1,\cE_2\subseteq \cE$ and exponents  $\widetilde{\mu}=(\mu_j)_{j\in \cE_1\cup \cE_2}$ such that
$$
P_{\cE_1,\cE_2,\widetilde{\mu}}(t)\equiv 0\pmod p.
$$
We may then extract an irreducible (over $\Q$) factor $f_E\in \Pf$ from
$P_{\cE_1,\cE_2,\widetilde{\mu}}$ such that
$$
f_E(t)\equiv 0\pmod p.
$$
Thus, for all $\cE,\cF\subseteq S$ with $\# \cE=\# \cF=r$
$$
\Res (f_\cE,f_{\cF})\equiv 0\pmod p,
$$
while also $\Res (f_\cE,f_{\cF})\mid R$. Since $\gcd(p,R)=1$, it follows that necessarily
$$
\Res (f_\cE,f_{\cF})=0
$$
for all $\cE,\cF \subseteq \cS$ of size $\# \cE=\# \cF=r$. Since the $f_\cE$ are irreducible,
they must coincide
up to a scalar factor
and hence have a common root $\beta\in \C$.

Apply then Lemma~\ref{lem:MultIndSubset beta+j} to $\cA=\cS+\beta\subseteq \{\beta+j; \, 1\le j\le m\}\subseteq \C$. This gives a multiplicatively independent
set $\cA_0=\cE+\beta$ with some $\cE\subseteq \cS$  of size $\# \cE=r$. But since $f_\cE(\beta)=0,$ we get
$P_{\cE_1,\cE_2,\widetilde{\mu}}(\beta)=0$, contradicting the multiplicative independence.
\end{proof}

\subsection{Distribution in very short intervals
for a large proportion of primes $p$}

\begin{theorem}
\label{thm: compare Erdos Murty}
If $a\in \N$, $m>1$, $1\le \mu<\log m$  and $T\in\Z_+$ is taken
sufficiently large, then for all but  $O(\mu^{-1} \pi(T) )$ primes $p<T$ we have
$$
\max_{0\le j<m}\ordp(a+j)>T^{1-m^{-1/\mu}}.
$$
\end{theorem}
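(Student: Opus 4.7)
The plan is to follow the strategy of the proof of Theorem~\ref{thm: small interval large order}, substituting Lemma~\ref{lem:NumDiv} for Corollary~\ref{cor:ErdMur2} in order to drop the size of the exceptional set from $o(\pi(T))$ down to the desired $O(\pi(T)/\mu)$. Write $\alpha = m^{-1/\mu}$ and suppose, toward a contradiction, that a prime $p \le T$ satisfies $\ordp(a+j) \le T^{1-\alpha}$ for every $0 \le j < m$.

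First I would apply Lemma~\ref{lem:NumDiv} with $\lambda \asymp \mu$ and a suitable $\beta \in (0,1)$, together with Lemma~\ref{lem:ErdMur2} and Lemma~\ref{lem:squaredivides} at threshold exponents depending on $\alpha$ and $\mu$. Each of these three sieve applications can be calibrated so as to contribute an exceptional set of size $O(\pi(T)/\mu)$, and for the surviving primes $p$ one has a factorization $p - 1 = AB$ with $A \le T^{\alpha/4}$ composed of primes below $T^{\alpha_0}$, and $B$ squarefree, composed of primes above $T^{\alpha_0}$, with $\omega(B) \le \omega_0$. The parameters will be chosen so that $\omega_0 < (\log m)/(2\log 2)$, which is possible precisely because $\mu < \log m$: the bound in Lemma~\ref{lem:NumDiv} scales like $\lambda \log(1/\alpha_0) \asymp \mu \cdot (\log m)/\mu = \log m$, and tuning $\lambda = c\mu$ for small $c$ with $\beta$ close to $1$ drives the resulting constant below $1/(2\log 2)$.

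For each $j \in [0, m)$, writing $(p-1)/\ordp(a+j) = a_j b_j$ with $a_j \mid A$ and $b_j \mid B$ exactly as in the proof of Theorem~\ref{thm: small interval large order}, one obtains $b_j \ge T^{\alpha(1-o(1))}$, and $a+j$ lies in the subgroup $\cH_{b_j} = \langle g^{b_j}\rangle \subset \F_p^*$ of order $(p-1)/b_j \le T^{1-\alpha(1-o(1))}$. Since $b_j$ divides $B$, there are at most $2^{\omega_0}$ such subgroups, so pigeonholing over $j$ produces a single subgroup $\cH$ of order $N < T^{1-\alpha(1-o(1))}$ and an index set $\cS \subseteq [0, m)$ of size $|\cS| \ge m/2^{\omega_0} > \sqrt m$ with $a+j \in \cH$ for every $j \in \cS$.

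The remainder of the argument mimics the end of the proof of Theorem~\ref{thm: small interval large order}. Set $r = \lfloor c_1 \log m\rfloor$ and choose $\nu$ with $\nu^r$ slightly exceeding $N$. For each $\cE \subseteq \cS$ of size $r$, the products $\prod_{j \in \cE}(a+j)^{\mu_j}$ with $0 \le \mu_j < \nu$ all lie in $\cH$ and so cannot all be distinct modulo $p$, yielding a polynomial identity $P_{\cE_1, \cE_2, \widetilde\mu}(a) \equiv 0 \pmod p$ for some disjoint $\cE_1, \cE_2 \subseteq \cE$. After discarding the extra primes dividing the global resultant $R$ built from the resulting irreducible factors $f_\cE \in \Z[X]$ (which eliminates only $2^{o(T)}$ primes), all relevant resultants must vanish over $\Q$, and the $f_\cE$ share a common root $\beta \in \C$. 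Applying Lemma~\ref{lem:MultIndSubset beta+j} to $\cS + \beta$ produces a multiplicatively independent subset $\cE_0 + \beta$ of size $\gg \log m$, contradicting the identity $P_{\cE_1,\cE_2,\widetilde\mu}(\beta) = 0$, which encodes a nontrivial multiplicative relation among $\{j + \beta : j \in \cE_0\}$. The chief obstacle is the parameter calibration outlined above, and this is precisely the step where the hypothesis $\mu < \log m$ enters essentially.
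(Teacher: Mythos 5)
Your sieve setup is sound and matches the paper's use of Lemma~\ref{lem:NumDiv} with $\lambda\asymp\mu$ together with Lemma~\ref{lem:ErdMur2} and Lemma~\ref{lem:squaredivides}, and the pigeonhole over the $\le 2^{\omega_0}$ subgroups is the right idea. However, the heart of your argument --- taking $r=\lfloor c_1\log m\rfloor$ and then choosing the exponent bound $\nu$ so that $\nu^r$ slightly exceeds $N<T^{1-\alpha}$ --- cannot be made to work, and this is not a calibration detail but a structural obstruction. You need two things simultaneously: (a) $\nu^r>N\approx T^{1-\alpha}$ so that the products cannot all be distinct in $\cH$, and (b) the total size of the product $R$ to be $2^{o(T)}$ so that discarding $p\mid R$ costs only $o(\pi(T))$ primes. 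Since the number of products grows like $\nu^r$ and each difference has bit size $\gtrsim\nu$, any reasonable construction of $R$ has $\log|R|\gtrsim\nu^{r+1}$. With $\nu\approx N^{1/r}\approx T^{(1-\alpha)/r}$ this gives $\log|R|\gtrsim T^{(1-\alpha)(1+1/r)}$, which is $o(T)$ only when $1/r<\alpha/(1-\alpha)$, i.e.\ $r\gtrsim(1-\alpha)/\alpha\approx m^{1/\mu}$. Since the hypothesis $\mu<\log m$ forces $\alpha=m^{-1/\mu}<e^{-1}<1/2$, your $r\asymp\log m$ is typically orders of magnitude too small (e.g.\ for bounded $\mu$ one needs $r$ polynomial in $m$), and $\log|R|$ is actually $\gg T$, so the claim that only $2^{o(T)}$ primes are discarded is false.

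Once $r$ must be $\approx 4m^{1/\mu}$, the rest of your plan collapses for a second reason: Lemma~\ref{lem:MultIndSubset beta+j} applied to a set of size $\ge\sqrt m$ produces only $c(1/2)\log m$ multiplicatively independent elements, which is far fewer than the required $r\approx 4m^{1/\mu}$. This is exactly why the paper's proof of this theorem does \emph{not} follow the template of Theorem~\ref{thm: small interval large order}. Instead it exploits that $a$ is a concrete integer (not a residue class, not a polynomial variable): it first uses the Iwaniec sieve, via Corollary~\ref{mult_ind}, to select $\gg\sqrt m/\log m$ multiplicatively \emph{independent} integers in $[a,a+m)$ \emph{before} doing anything else, then pigeonholes those into a single subgroup $\cH$ to get a multiplicatively independent $\cS$ of size $r\le m^{0.1}$, and finally builds $R$ as a product of nonzero \emph{integer} differences $\prod_{\cS_1}\xi^{k_\xi}-\prod_{\cS_2}\xi^{k_\xi}$ (no irreducible factorizations, no resultants of polynomial pairs, no common root $\beta$, no Lemma~\ref{lem:MultIndSubset beta+j}). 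Crucially, after cancelling common factors the number of differences is only $\asymp 3^rK^r$ with $K\approx T^{1/(r+2)}$, so $\log|R|\lesssim K^{r+1}=T^{(r+1)/(r+2)}=o(T)$, while $K^r=T^{r/(r+2)}>T^{1-0.5m^{-1/\mu}}$ precisely when $r\gtrsim 4m^{1/\mu}$. The constraint $\mu<\log m$ enters to guarantee $r\le m^{0.1}$, so that the Iwaniec-sieve supply of multiplicatively independent integers is sufficient --- not, as in your sketch, to make the Lemma~\ref{lem:NumDiv} constant small. So the correct proof uses Corollary~\ref{mult_ind} as the engine, whereas your proposal leans on Lemma~\ref{lem:MultIndSubset beta+j}, which is the right tool for the varying-interval Theorem~\ref{thm: small interval large order} but not here.
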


\begin{proof}  For small $\mu$ (and, in particular, for small $m$)
the result is trivial. We  assume that $\mu$ is large enough.
Moreover, it is enough to prove the result for
\begin{equation}
\label{assum_for_mu}
\mu<0.1\log m.
\end{equation}
Take $r=4\fl{m^{1/\mu}}$. Then we have
\begin{equation}
\label{est_for_r}
r\le m^{0.1}.
\end{equation}
Let $\mathfrak{L}$ be the collection of all multiplicatively independent
subsets $\cS\subseteq I=[a, a+m)$ of cardinality $r$.
The set $\mathfrak{L}$ is nonempty by
Corollary~\ref{mult_ind}. Let $T$ be sufficiently large (depending on $a,m$)
and set
\begin{equation*}
\begin{split}
K&=\fl{T^{1/(r+2)}},\\
R&=\prod_{\substack{\cS\in \mathfrak{L}\\{\cS_1,\cS_2\subseteq \cS, \cS_1\cap \cS_2=\emptyset}}}
\(\prod_{\substack{1\le k_\xi\le K \\ \xi\in \cS_1}}\xi^{k_\xi}-
\prod_{\substack{1\le k_\xi\le K \\ \xi\in \cS_2}}\xi^{k_\xi}\).
\end{split}
\end{equation*}
Hence, $R\in \Z\setminus{\{0\}}$ and for $T$ large enough
$$
|R|<(a+m)^{rK^{r+1}3^r}<2^{o(T)}.
$$
Hence, denoting
$$\cP_0 = \{p<T~:~\gcd(p,R)=1\}$$
we have
\begin{equation}
\label{eqn:est_ P0}
\#\cP_0=(1+o(1))\pi(T).
\end{equation}

Now we take
$$\alpha=m^{-1/\mu}/\mu,\quad\beta=1/\mu, \quad \gamma=m^{-1/\mu}/2$$
and denote by $\cP_1$ the set of primes $p\in\cP_0$ such that
\begin{itemize}
\item[(i)] the product of all prime factors of $p-1$ that are smaller than
$T^{\alpha}$ is at most $T^{\gamma}$;
\item[(ii)] the number of prime divisors $q$ of $p-1$
satisfying $q\ge T^\alpha$ does not exceed
$$\frac{0.2\mu\log(\beta/\alpha)}{1-\beta}+\frac{1}{\beta};$$
\item[(iii)] $p-1$ has no divisor $q^2$ with $q\ge T^\alpha$.
\end{itemize}

By~\eqref{eqn:est_ P0} and Lemmas~\ref{lem:ErdMur2}, \ref{lem:NumDiv},
and~\ref{lem:squaredivides}, we derive
$$\#\cP_1= (1 + O(1/\mu)) \pi(T).
$$
Also, observe that, by~\eqref{assum_for_mu}, for large enough $\mu$ we have
\begin{equation}
\label{eqn:prime_fact_B}
\frac{0.2\mu\log(\beta/\alpha)}{1-\beta}+\frac{1}{\beta}<0.3\log m+\mu <0.4\log m.
\end{equation}
By assumption (i) in the definition of $\cP_1$, we may write $p-1=AB$,
where $A<T^{\gamma}$ and $B$ has no prime factors less than $T^{\alpha}$.
By (ii) and~\eqref{eqn:prime_fact_B}, $B$ has at most $0.4\log m$ prime
factors. By (iii), $B$ is square-free. Therefore, the number of factors of $B$
is at most
$$2^{0.4\log m}<m^{0.3}.$$
We assume that for $j=0,\ldots,m-1$ we have
$$\ordp(a+j)\le T^{1-m^{-1/\mu}}.$$
By Corollary~\ref{mult_ind}, we choose a set
$$\cI\subseteq\{a+j~:~j\in[0,m[\},\qquad \#\cI\gg\sqrt m/\log m,$$
of multiplicatively independent numbers.
For $\xi\in \cI$ we write
$$
\frac{p-1}{\ordp\xi}=\frac{AB}{\ordp \xi}=ab
$$
where $a\mid A,\, b\mid B$.
Since $B$ has at most $m^{0.3}$ divisors, we can take a subset $\cJ\subseteq \cI$
with $\#\cJ\ge\#\cI m^{-0.3}> m^{0.1}$ such that the same divisor $b$
of $B$ is associated to any $\xi\in \cJ$. We have $\xi=g^{ab}$ for some primitive
root $g$ modulo $p$. Thus, $\xi$ belongs to the subgroup $\cH$ of $\F_p^*$
generated by the element $g^b$. We have
$$
b=\frac{p-1}{a \ordp \xi}> \frac{p-1}{T^{\gamma}\ordp \xi}
\ge\frac{p-1}{T^{1-0.5m^{-1/\mu}}}.
$$
Thus, $\cJ\subseteq \cH $ and
$$
\#\cH \le\frac{p-1}{b}<T^{1-0.5m^{-1/\mu}}.
$$
By~\eqref{est_for_r}, we can take a subset $\cS\subseteq \cJ$ of cardinality $\# \cS =r$.
Since $p\in\cP_0$, we conclude that all products
$$\prod_{\substack{0\le k_\xi\le K \\ \xi\in \cS}}\xi^{k_\xi}$$
are distinct modulo $p$.
The number of such products is
$$(K+1)^r>T^{r/(r+2)}>T^{1-0.5m^{-1/\mu}}>\# \cH.$$
But this is impossible since all the products belong to $\cH$.
This completes the proof.
\end{proof}

\subsection{Distribution in longer  intervals
for almost all $p$}
For large $h$ we can use Theorem~\ref{thm:GProdSet AlmostAll}.

\begin{theorem}
\label{thm:Order Fp}  Let $\alpha>0$ be fixed.
For  $T^\alpha\le h<T$,
for all but $o(\pi(T))$ primes $p \le T$ and for any  $s \in\F_p$, the set
$$\cA = \{ x+s  ~:~ 1\le x\le h \} \subseteq \F_p
$$
contains an
element $a \in \cA$ of multiplicative
order
$$
\ordp a > \exp\(O\(\frac{\log h}{\log\log h}\)\)(hT)^{1/2}.
$$
\end{theorem}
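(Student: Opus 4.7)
The proof plan is to combine the lower bound on $|\cA^{(\nu)}|$ from Corollary~\ref{cor:ProdSet Fp AlmostAll} with the structural information on the divisors of $p-1$ provided by the Erd\H{o}s--Murty type lemmas.

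First, I would fix an integer $\nu = \nu(\alpha) := \lceil (1+1/\alpha)/2\rceil$, so that $h^{2\nu-1}\ge T$ whenever $h\ge T^{\alpha}$, which ensures that the second term in the bound of Theorem~\ref{thm:GProdSet AlmostAll} dominates. For this fixed $\nu$, Corollary~\ref{cor:ProdSet Fp AlmostAll} gives, for all but $o(\pi(T))$ primes $p\le T$ and every $s\in\F_p$, the bound
$$
|\cA^{(\nu)}|\;\ge\;(hT)^{1/2}\exp\bigl(-O(\log h/\log\log h)\bigr).
$$

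Next, I would argue by contradiction. Suppose every $a\in\cA$ satisfies $\ordp a\le M$, where $M<(hT)^{1/2}\exp(-C\log h/\log\log h)$ for a sufficiently large constant $C$. Any $\nu$-fold product $a_1\cdots a_\nu\in\cA^{(\nu)}$ has multiplicative order dividing $L:=\mathrm{lcm}\{\ordp a:a\in\cA\}$, so $\cA^{(\nu)}$ is contained in the unique cyclic subgroup of $\F_p^{*}$ of order $L$, which gives $|\cA^{(\nu)}|\le L$. Furthermore $L$ is a divisor of $p-1$ with the property that every prime power $q^{e}$ exactly dividing $L$ satisfies $q^{e}\le M$, i.e., $L$ is the ``$M$-smooth part'' of $p-1$.

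Finally, I would apply the Erd\H{o}s--Murty-type structural lemmas to show that, for all but $o(\pi(T))$ primes $p\le T$, any $M$-smooth divisor $L$ of $p-1$ satisfies $L\le M\exp(O(\log h/\log\log h))$. The contribution from small primes (say $q\le T^{\alpha'}$ for a suitably chosen $\alpha'$) is controlled by Corollary~\ref{cor:ErdMur2}; the contribution from medium-sized prime divisors is controlled by Lemma~\ref{lem:NumDiv}; and Lemma~\ref{lem:squaredivides} rules out large prime-squared divisors. Combining this with $L\ge (hT)^{1/2}\exp(-O(\log h/\log\log h))$ yields $M\ge (hT)^{1/2}\exp(-O(\log h/\log\log h))$, contradicting the assumption.

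The main obstacle is the last step. A naive bound $L\le M^{\omega(L)}$ costs a factor of $\omega(p-1)\sim\log\log T$ in the exponent, which is too expensive. The delicate part is a careful dyadic decomposition of the prime factorisation of $L$, balancing the cutoff threshold so that Corollary~\ref{cor:ErdMur2} and Lemma~\ref{lem:NumDiv} together produce only a subexponential loss of the form $\exp(O(\log h/\log\log h))$, matching the claimed bound.
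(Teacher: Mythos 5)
Your overall strategy — lower-bound a product set of $\cA$ using the $K_\nu$ machinery and upper-bound it by the order of a cyclic subgroup containing it — is in the same family as the paper's argument, but there is one decisive divergence, and it is precisely the step you flag as the ``main obstacle.'' The paper does not take $L=\mathrm{lcm}\{\ordp a : a\in\cA\}$. Instead, it pigeon-holes on the (at most $\tau(p-1)$) possible values of $\ordp a$ to extract a subset $\cB\subseteq\cA$ of size $\#\cB\ge h/\tau(p-1)$ whose elements all share \emph{one} order $t$. Every $\nu$-fold product from $\cB$ then lies in the unique cyclic subgroup of order exactly $t$, and a Cauchy--Schwarz argument on the representation function $Q(\lambda)$ gives $(\#\cB)^{2\nu}\le t\,K_\nu(p,h,s)$; combining with Theorem~\ref{thm:GProdSet AlmostAll} and $\tau(p-1)=\exp(O(\log T/\log\log T))$ yields $t\ge (hT)^{1/2}\exp(-O(\log h/\log\log h))$ directly, with no structural input about the factorisation of $p-1$ at all. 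The pigeon-hole costs only a $\tau(p-1)$ factor, which is absorbed into the $\exp(O(\log h/\log\log h))$ term, and it completely sidesteps the lcm.

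By contrast, your step~5 — showing that for almost all $p$ every $M$-smooth divisor $L$ of $p-1$ satisfies $L\le M\exp(O(\log h/\log\log h))$ — is not merely delicate, it is in serious doubt as stated, and the lemmas available in the paper do not establish it. The quantity $L$ can be dramatically larger than $M$: if $p-1$ has $k$ medium-sized prime factors each close to $M^{1/2}$, say, then $L$ can be of size $M^{k/2}$ while every prime power dividing $L$ is $\le M$. Lemma~\ref{lem:NumDiv} does bound the number of prime factors of $p-1$ above $T^{\alpha'}$, but only by a constant depending on $\lambda$, and only outside an exceptional set of density $\asymp 1/\lambda$; to push the exceptional density to $o(1)$ you must let $\lambda\to\infty$, which in turn lets the count of medium primes grow without bound. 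A dyadic decomposition over ranges $[T^{\beta/2^{j+1}},T^{\beta/2^j}]$ runs into the same wall: even one prime factor per dyadic scale contributes $\Theta(\log\log T)$ scales, hence a factor $T^{\Theta(\beta)}$ or worse to $L$, far exceeding $\exp(O(\log h/\log\log h))$. Corollary~\ref{cor:ErdMur2} only tames the genuinely small primes ($q\le T^{c\eta/\log(1/\eta)}$), and Lemma~\ref{lem:squaredivides} only excludes large squares; neither touches the regime that hurts you. So the proposal, as written, has a genuine gap at exactly the point you identify, and I do not see how to close it with the tools in the paper. The moral is that pigeon-holing on a \emph{single} order $t$ (rather than the lcm of all orders) is the key simplification that makes the argument go through.
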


\begin{proof} We fix $\nu\ge1/\alpha$.
Clearly $\cA$ contains a set $\cB \subseteq \cA$  of elements of the same
multiplicative order $t$ and of cardinality
\begin{equation}
\label{eq:Large B} \#\cB \ge \# \cA/ \tau(p-1) = h/\tau(p-1).
\end{equation}
For $\lambda \in \F_p^*$, let
$$
Q(\lambda) = \#\{(x_1,\ldots, x_\nu)\in\cB\times\ldots \times \cB~:~
\lambda \equiv x_1\ldots x_\nu \pmod p\}.
$$
Then obviously,
$$
\#\{\lambda \in \F_p ~:~ Q(\lambda) > 0\} \le t.
$$
Hence, using the Cauchy inequality, we obtain
$$
(\#\cB)^{2\nu} = \(\sum_{\lambda \in \F_p^*}Q(\lambda)\)^2
\le t \sum_{\lambda \in \F_p^*}Q(\lambda)^2 \le t K_\nu(p,h,s),
$$
which together with Theorem~\ref{thm:GProdSet AlmostAll}
and the standard estimate for
$\tau(p-1)$ implies the result.
\end{proof}

We note that for intermediate values of $h$, namely for $h$ with
$T^\alpha\le h<T^{1-\alpha}$ for some fixed $\alpha>0$, using the
ideas and results of  Erd\H os and Murty~\cite{ErdMur} one can improve
slightly Theorem~\ref{thm:Order Fp}. Namely, one can show that
for any function $\eta(z)>0$ with $\eta(z)\to 0$ we have
$$
\ordp a >  (hT)^{1/2}T^{\eta(T)}
$$
instead of the bound of Theorem~\ref{thm:Order Fp}.

\section*{Acknowledgement}

The research of  J.~B. was partially supported
by National Science Foundation   Grant DMS-0808042, that of  S.~V.~K.
by Russian Fund for Basic Research, Grant N.~11-01-00329,
and Program Supporting Leading Scientific Schools, Grant Nsh-6003.2012.1,
and that of  I.~E.~S. by Australian Research Council Grant  DP1092835.


\begin{thebibliography}{99}

\bibitem{ACZ} A. Ayyad, T. Cochrane and Z. Zheng,
`The congruence $x_1x_2 \equiv x_3x_4 \pmod p$, the equation
$x_1x_2 = x_3x_4$ and the mean value of character sums',
{\it J. Number Theory\/}, {\bf 59} (1996), 398--413.


\bibitem{BHW} U. Betke, M. Henk and J. M. Wills,
`Successive-minima-type inequalities', {\it Discr. Comput. Geom.\/},
{\bf 9} (1993), 165--175.


\bibitem{BGKS1} J.~Bourgain, M.~Z.~ Garaev, S. V. Konyagin and
I. E. Shparlinski,
`On the hidden shifted power problem',
{\it SIAM J. Comp.\/},  (to appear).

\bibitem{BGKS2} J.~Bourgain, M.~Z.~Garaev, S. V. Konyagin
and I. E. Shparlinski, `On congruences with products of variables
from short intervals and  applications',
 {\it Proc. Steklov Math. Inst.\/}, (to appear).

\bibitem{Chang1} M.-C. Chang, `Factorization in
generalized arithmetic progressions and applications to the
Erd{\H o}s-Szemer{\'e}di sum-product problems', {\it Geom. Funct. Anal.\/},
{\bf 13} (2003),   720--736.

\bibitem{Chang2003} M. C. Chang, `The Erd\H{o}s-Szemer\'edi problem on sum set
and product set', {\it Ann. of Math.\/}, {\bf 157} (2003), 939--957.

\bibitem{Chang2} M.-C. Chang, `Elements of large order in
prime finite fields', {\it Bull. Aust. Math. Soc.\/} (to appear).


\bibitem{CillGar} J. Cilleruelo and
M. Z. Garaev, `Concentration of points on two and three dimensional
modular hyperbolas and applications', {\it  Geom. and Func.
Anal.\/},  {\bf 21} (2011), 892--904.

\bibitem{ErdMur} P. Erd{\H o}s and R. Murty, `On the order of $a \pmod p$',
{\it Proc. 5th Canadian Number Theory Association Conf.\/},
Amer. Math. Soc.,  Providence, RI,
1999, 87--97.

\bibitem{EvSchSch} J. H. Evertse, H. P Schlickewei and W. M. Schmidt,
`Linear equations in variables which lie in a multiplicative group', {\it Ann. of Math.\/},
{\bf 155} (2002), 807--836.

\bibitem{vzGG}
J. von zur Gathen and J. Gerhard, {\it Modern computer algebra\/},
Cambridge University Press, Cambridge, 2003.

\bibitem{Iwa} H. Iwaniec, 'On the problem of Jacobsthal',
{\it Demonst. Math.\/}, {\bf 11} (1978), 225--231.


\bibitem{IwKow} H. Iwaniec and E. Kowalski,
{\it Analytic number theory\/}, Amer.  Math.  Soc.,
Providence, RI, 2004.

\bibitem{Mig} M.~Mignotte, {\it Mathematics for
computer algebra\/}, Springer-Verlag, Berlin, 1992.

 \bibitem{Nar}
W. Narkiewicz, {\it Elementary and analytic theory of algebraic
numbers\/}, Polish Sci. Publ., Warszawa, 1990.

\bibitem{PomShp} C. Pomerance and I. E. Shparlinski,
`Smooth orders and  cryptographic applications',
{\it Proc. 5-th Algorithmic Number Theory Symp.\/},
Lect. Notes in Comput. Sci., vol.~2369, Springer-Verlag,
Berlin, 2002, 338--348.

\bibitem{Shi} S. Shi,
`The equation $n_1n_2 \equiv n_3n_4 \pmod p$ and mean value of character sums',
{\it J. Number Theory\/}, {\bf 128} (2008), 313--321.


\bibitem{TaoVu}
T. Tao and V. Vu, {\it Additive combinatorics\/}, Cambridge Stud.
Adv. Math., {\bf 105}, Cambridge University Press, Cambridge, 2006.

\end{thebibliography}
\end{document}